\documentclass[leqno]{amsart}
\usepackage{amsmath} 
\usepackage{amssymb,mathtools,stmaryrd}
\usepackage{mathrsfs,euscript}
\usepackage[table,dvipsnames]{xcolor}
\usepackage{hyperref,tikz-cd,enumitem}
\usepackage[utf8]{inputenc}
\hypersetup{
 colorlinks=true,
 linkcolor=DarkOrchid,
 filecolor=blue,
 citecolor=olive,
 urlcolor=orange,
 pdftitle={definite unitary},
}

\setlength{\textwidth}{\paperwidth}
\addtolength{\textwidth}{-2in}
\calclayout

\newtheorem{thm}{Theorem}[section]
\newtheorem{lem}[thm]{Lemma}
\newtheorem{prop}[thm]{Proposition}
\newtheorem{cor}[thm]{Corollary}

\theoremstyle{definition}
\newtheorem{defn}[thm]{Definition}

\theoremstyle{remark}
\newtheorem{rem}[thm]{Remark}

\makeatletter
\newsavebox{\@brx}
\newcommand{\llangle}[1][]{\savebox{\@brx}{\(\m@th{#1\langle}\)}%
  \mathopen{\copy\@brx\kern-0.5\wd\@brx\usebox{\@brx}}}
\newcommand{\rrangle}[1][]{\savebox{\@brx}{\(\m@th{#1\rangle}\)}%
  \mathclose{\copy\@brx\kern-0.5\wd\@brx\usebox{\@brx}}}
\makeatother


\newcommand{\smat}[1]{\left(\begin{smallmatrix} #1 \end{smallmatrix}\right)}
\newcommand{\id}{\mathbf{1}}
\newcommand{\oo}{\mathcal{O}} 


\newcommand{\Q}{{\mathbf{Q}}}
\newcommand{\Z}{{\mathbf{Z}}}
\newcommand{\Qp}{\mathbf{Q}_p}
\newcommand{\Zp}{\mathbf{Z}_p}

\newcommand{\R}{\mathbf R}
\newcommand{\C}{\mathbf C}
\newcommand{\A}{\mathbf A}
\newcommand{\dd}{\mathfrak{d}} 
\newcommand{\arch}{\mathbf{a}}
\newcommand{\finite}{\mathbf{h}}
\DeclareMathOperator{\Nr}{N}
\DeclareMathOperator{\Tr}{Tr}

\DeclareMathOperator{\End}{End}

\DeclareMathOperator{\Hom}{Hom}

\DeclareMathOperator{\Res}{Res}

\DeclareMathOperator{\Ad}{Ad}

\DeclareMathOperator{\Lie}{Lie}
\DeclareMathOperator{\GL}{GL}

\DeclareMathOperator{\UU}{U}
\DeclareMathOperator{\Sp}{Sp}

\DeclareMathOperator{\GSp}{GSp}
\DeclareMathOperator{\GUU}{GU}

\DeclareMathOperator{\mtr}{tr}
\DeclareMathOperator{\diag}{diag}

\DeclareMathOperator{\vol}{vol} 
\DeclareMathOperator{\val}{val} 

\DeclareMathOperator{\Inj}{Inj}
\DeclareMathOperator{\Isom}{Isom}

\DeclareMathOperator{\Gal}{Gal}

\newcommand{\fa}{\mathfrak{a}}
\newcommand{\fc}{\mathfrak{c}}

\newcommand{\fl}{\mathfrak{l}}
\newcommand{\fm}{\mathfrak{m}}
\newcommand{\fn}{\mathfrak{n}}
\newcommand{\fp}{\mathfrak{p}}

\newcommand{\fs}{\mathfrak{s}}


\newcommand{\bs}{\mathcal{S}} 


\newcommand{\F}{{\mathcal{F}}} 
\newcommand{\K}{{\mathcal{K}}} 
\newcommand{\qch}{\epsilon} 
\newcommand{\OF}{{\mathcal{O}_{\F}}}
\newcommand{\OK}{\mathcal{O}_{\K}}
\newcommand{\kk}{F} 
\newcommand{\E}{E} 

\newcommand{\bw}{{\overline{w}}}

\newcommand{\fG}{\mathfrak{G}}
\newcommand{\fX}{\mathfrak{X}}

\newcommand{\V}{\mathbf V} 
\newcommand{\W}{\mathbf W} 
\newcommand{\G}{\mathbf G} 
\newcommand{\X}{\mathbf H} 
\newcommand{\KK}{\mathbf K} 
\newcommand{\xx}{\mathbf x}
\newcommand{\yy}{\mathbf y}

\newcommand{\cA}{\mathcal A} 
\newcommand{\cB}{\mathcal B} 

\newcommand{\mm}{\mathbf{m}}

\newcommand{\balpha}{\boldsymbol{\alpha}} 
\newcommand{\bbeta}{\boldsymbol{\beta}} 
\newcommand{\bnu}{\boldsymbol{\nu}} 
\newcommand{\bmu}{\boldsymbol{\mu}} 


\newcommand{\wt}[1]{\underline{ #1 }}
\newcommand{\bwt}[1]{\underline{\boldsymbol { #1 }}}
\newcommand{\M}{\mathbf{M}}
\newcommand{\pM}{\mathbf{M}}
\newcommand{\pMk}{M_{\wt{k}}}
\newcommand{\AM}{\mathcal{A}}
\newcommand{\bun}{\mathcal{E}}
\newcommand{\ii}{\mathbf i}
\newcommand{\ff}{\mathbf f}
\newcommand{\her}{\EuScript{H} }

\newcommand{\ordp}{e}

\newcommand{\vk}{v_{-\wt{k}}} 
\newcommand{\lk}{\mathnormal{l}_{\wt{k}}} 
\newcommand{\blk}{\mathnormal{l}_{\bwt{k}}} 
\newcommand{\Lk}{L_{\wt{k}}} 
\newcommand{\pf}{f} 
\newcommand{\ppf}{\hat{f}} 

\newcommand{\euF}{\EuScript{F}} 
\newcommand{\B}{\mathbf{B}} 

\newcommand{\ww}{\boldsymbol{\omega}}
\DeclareMathOperator{\an}{an}
\DeclareMathOperator{\ord}{ord}
\DeclareMathOperator{\cts}{cts}

\DeclareMathOperator{\Art}{Art}
\DeclareMathOperator{\Fr}{Fr}

\newcommand{\hZ}{{\hat{\mathbf{Z}}}}

\newcommand{\oeu}{\EuScript{O}}

\newcommand{\geu}{\EuScript{G}}
\newcommand{\keu}{\EuScript{K}}
\newcommand{\1}{\mathbf{1}}

\newcommand{\ee}{\mathbf e}
\newcommand{\bX}{\mathbb{X}}
\newcommand{\bY}{\mathbb{Y}}
\newcommand{\bV}{\mathbb{V}}
\newcommand{\bW}{\mathbb{W}}
\newcommand{\btheta}{\boldsymbol{\theta}}
\newcommand{\bdelta}{\boldsymbol{\delta}}

\newcommand{\I}{\mathcal{I}}

\begin{document}

\title{Hida family of theta lift from $\UU(1)$ to definite $\UU(2)$}

\begin{abstract}
    Let $\K/\F$ be a CM extension satisfying the ordinary assumption for an odd prime $p$.
    In this article, we 
    construct Hida families that interpolates theta lifts 
    of algebraic Hecke characters of $\K$ 
    to a definite unitary group $\UU(2)$
    defined from skew-Hermitian spaces over $\K$,
    and show that the Hida family is primitive
    when $p$ is sufficiently large and the central $L$-values
    of an auxiliary Hecke character 
    satisfy the non-vanishing modulo $p$ condition in \cite{Hsieh12}. 
\end{abstract}

\author[Y-S.~Lee]{Yu-Sheng Lee}
\address{Department of Mathematics, University  of Michigan, Ann Arbor, MI 48109, USA}
\email{yushglee@umich.edu}

\date{\today}

\maketitle
\setcounter{tocdepth}{1}
\tableofcontents

\section*{Introduction}

Theta correspondence has proved to be a powerful method
for testing Langlands functoriality between dual reductive pairs 
and periods relations of automorphic forms.
In an abstract set-up, 
$(G,H)$ is a pair of reductive subgroups over a global field $\F$
that are centralizers to each other in a symplectic group $\Sp$,
and $\bs(\A)$ is a certain space of Bruhat-Schwartz functions
which is acted upon by the metaplectic cover of $\Sp$ via the Weil represenation 
once a nontrivial character $\psi\colon \A_\F/\F\to \C$ is fixed.
Then given $\phi\in \bs(\A)$ 
and an automorphic function $\ff$ of $H$,
the associated theta lift $\theta_\phi(\ff)$, when it converges,
is an automorphic function of $G$
that is expected to carry information about Langlands parameters of $\ff$.

Of special arithmetic interest is the case when the theta correspondence can be shown
to preserve integral structures of the ambient spaces of automorphic functions.
One can then compare 
the congruence relations satisfied by $\ff$
and those satisfied by $\theta_\phi(\ff)$.
In favorable situations,
this produces an automorphic function of $G$
not coming from theta correspondences that is congruent to $\theta_\phi(\ff)$,
which has applications toward Iwasawa main conjectures or conjectures of Bloch-Kato type
(cf \cite{Hida1994} and \cite{Agarwal2013} for example).
Or conversely, 
when the congruence relations are better understood,
one can compare automorphic periods of $\ff$ and $\theta_\phi(\ff)$
and obtain integral relations among them \cite{Prasanna2006}\cite{Tilouine2022}.

When the target group $G$ is quasi-split,
the integrality, or the $p$-integrality for some prime $p$, 
of $\theta_\phi(\ff)$ can be detected by computing the Fourier coefficients.
This is the case in \cite{Hida1993} 
when $\ff=\chi$ is an algebraic Hecke character of an imaginary quadratic field $\K$
and $\theta_\phi(\ff)$ is the CM modular form with $q$-expansion
\begin{equation}\label{eq:classicalCM}
    \theta(\chi)=\sum_{\mathfrak{a}\subset \OK} \chi(\mathfrak{a})e^{2\pi i \Nr(\mathfrak{a})z}.
\end{equation}
This is also the case for Yoshida lift \cite{Hsieh2017},
in which $\ff$ is a pair of modular forms
and the Fourier coefficients of the Yoshida lift $\theta_\phi(\ff)$ on $\GSp(4)$
can be related to the Bessel periods.
And in \cite{Finis}, 
the Fourier-Jacobi components of the endoscopic lift $\theta_\phi(\ff)$ on $\GUU(2,1)$ 
of a modular form $\ff$ can be expressed as sums of values of $\ff$ at CM points.
In the above cases,
the authors can further study the modulo-$p$ non-vanishing properties of $\theta_\phi(\ff)$,
using the modulo-$p$ non-vanishing properties of the periods.

For more general $G$, 
one can instead consider whether
$\theta_\phi(\ff)$ has sufficiently many toric integrals that are rational or $p$-integral.
In \cite{Harris1999},
this approach is applied to $(G,H)=(\UU(n+1),\UU(n))$ to prove 
the rationality of the theta lift.
By seesaw arguments,
the toric integrals are reduced to 
theta lifts from $\UU(1)$ to $\UU(1)$,
for which the rationality is known.
And in \cite{Prasanna2006}, this approach is applied
to obtain integral Jacquet-Langlands-Shimizu correspondence,
in which case the toric integral can be expressed by the Waldspurger formula.

In this article, we consider the case $(G,H)=(\UU(2),\UU(1))$ for a definite $\UU(2)$
with a different method.
In this case $\ff=\eta$ is a character,
and we obtain $p$-integrality of $\theta_\phi(\ff)$ by
exploiting the fact that the theta lift on $\UU(2)$ can be obtained via the see-saw below
from the theta lift to the quasi-split $\UU(2,2)$,
which admits Fourier expansions similar to \eqref{eq:classicalCM}.
\begin{equation}\label{eq:seesaw_Intro}
    \begin{tikzcd}
        \UU(2,2) \arrow[dr,dash]& \UU(1)\times\UU(1)^2\\
        \UU(2)\times \UU(1)^2\arrow[u,"\iota"]\arrow[ur,dash] & \UU(1) \arrow[u,"\textnormal{diag}"]
    \end{tikzcd}
\end{equation}
This method is inspired by the similar approach to obtain 
Klingen-Eisenstein series with integral Fourier-Jacobi coefficients
from Siegel-Eisenstein series on larger groups,
which has been used in \cite{Skinner2013}, \cite{Hsieh2014},
and \cite{Wan2020} and obtain Eisenstein congruences.
We remark that while our method can be easily 
extended to cover the case when $G=\UU(m,n)$ is of general signature,
new idea is required for $H$ larger than $\UU(1)$
since theta correspondence doesn't admit general pull-back formulae as does Eisenstein series.

The integral theta lift thus obtained,
which we refer to as the pull-back theta lifts,
can be naturally deformed into a $p$-adic Hida family
when the unitary groups are defined on a CM extension $\K/\F$.
We now state the main result of the paper explicitly.
Let $\F$ be a totally real number field 
$\K$ be a totally imaginary quadratic extension over $\F$.
We assume $p$ is unramified and every primes of $\F$ 
above $p$ is split in $\K$, which allows us to fix a CM type $\Sigma$.
Let $G$ be the definite unitary group over $\F$ defined as
\[
    \UU(2)(R)=\{g\in \GL_2(\K\otimes_\F R)\mid gg^*=\id_2\}.
\]
To define the Weil representation we fix a character
$\chi$ of $\A_{\K}^\times/\K^\times$
such that $\chi\mid_{\A_\F^\times}=\qch_{\K/\F}$.
We further assume 
\begin{enumerate}[label={($\chi$\arabic*)}]
    \item 
    The central $L$-value $L(\frac{1}{2},\chi)$ is nonzero.
    \item 
    The conductor $\fc$ of $\chi$ is prime-to-$p$.
    \item
    The Hecke character 
    $\chi_\circ\coloneqq \chi|\cdot|_\K^{1/2}$ is algebraic and
    has the infinity type $\Sigma^c$
\end{enumerate}
Now let $\fs=\fs^c$ be a prime-to-$p\fc$
ideal of $\K$ consists only of split primes.
Let $\K_{p^n\fs}$ be the ray class field 
of conductor $p^n\fs$ over $\K$,
$\K_{p^\infty\fs}=\bigcup_{n}\K_{p^n\fs}$,
and $\fG_{\fs}^a$ be the Galois group of the maximal pro-$p$ anticyclotomic
subextension in $\K_{p^\infty\fs}$.
We say an algebraic Hecke characters $\eta$ is $\fs$-admissible if
\begin{enumerate}
    \item $\eta$ has the infinity type $k\Sigma$ for $k\geq 0$.
    \item the prime-to-$p$ part of the conductor of $\eta$ divides $\fs$.
\end{enumerate}

\begin{thm}\label{thm:intro1}
    There exists a Hida family, which we define to be a 
    measure $\euF^\circ_\fs$ on $\fG^a_\fs$
    valued in the space of p-adic modular forms of $\UU(2)$,
    which interpolates 
    pull-back theta lifts of $\eta$ for admissible $\eta$.
    
    Furthermore, when $p$ is sufficiently large with respect to $\chi$
    and $\chi$ satisfies the condition of \cite[Theorem A]{Hsieh12},
    then the Hida family is primitive. In other word,
    at each branch of characters of $\fG_\fs^a$
    there exists a specialization of $\euF^\circ$
    which is non-vanishing modulo $p$.
\end{thm}
The precise statements of the theorem are given respectively
in Theorem\ref{thm:family} and Theorem \ref{thm:primitive},
we refer to which for the precise meaning of $p$
being large enough.

While the conditions \ref{cond:chi2} and \ref{cond:chi3}
are technical, the condition \ref{cond:chi1}
is necessary for our construction since,
by the nature of the see-saw,
the pull-back theta lift of $\eta$ 
we obtained from \eqref{eq:seesaw_Intro}
is $\theta_\phi(\ff)$ multiplied by 
a square root of the central L-value $L(\frac{1}{2},\chi)$.
The same Hecke L-value (and some twist of which) also shows up 
when we consider the toric perids of $\theta_\phi(\ff)$ as in \cite{Harris1999}.

Let $\I_\fs=\oo\llbracket\fG_\fs^a\rrbracket$,
where $\oo$ is the ring of integers of a sufficiently large 
field extension over $\Qp$.
We then define a $\I_\fs$-valued inner product
$\B(\euF^\circ_\fs,\euF^\circ_\fs)$ of $\euF^\circ_\fs$
with itself.
By comparing the inner product with that of theta lifts
and apply the Rallis inner product formula,
we obtain an following 
$p$-adic $L$-function $\mathcal{L}_{\fs}$
that can be seen as 
a projection of Katz's $p$-adic L-function 
\cite{Katz1978}\cite{Hida1993}
projected to the anticyclotomic direction.
We refer the reader to Theorem \ref{thm:function} 
for a precise statement of the following result.
\begin{thm}\label{thm:intro2}
    Put $\mathcal{L}_\fn=\Omega_p^{-2\Sigma}
    \B(\euF^\circ_{\fs},U_\fs^{-1}\euF^\circ_{\fs})
    \in \I_\fs$
    where $U_\fs$ is some Hecke action defined at $\fs$. Then
    when $\eta$ is admissible with the infinity type $k\Sigma$, 
    \begin{equation*}
        \frac{1}{\Omega_p^{(2k+2)\Sigma}}
        \int_{\fG_{\fs}^a}\hat{\eta}\,L_\fs=C(\K,\chi)
        \left(\frac{2\pi}{\Omega_\infty}\right)^{(2k+2)\Sigma}
    \textnormal{Im}(\delta)^{k}\tilde{\eta}(z_\delta^{-1})
    \frac{\Gamma((k+2)\Sigma)}{(2\pi)^{(k+2)\Sigma}}
    L(1,\chi^{-2}\tilde{\eta})
    \prod_{v\mid p\fs}E_v(1,\chi^{-2}\tilde{\eta})
    \end{equation*}
    where $C(\K,\chi)$ is a constant independent of $\eta$,
    $z_\delta\in\A_{\K,f}^\times$ is a fixed element, and
    $E_v(1,\chi^{-2}\tilde{\eta})$ is the modified Euler factor 
    $(1-\chi^{-2}\tilde{\eta}(\varpi_\bw)q_\bw^{-1})
    (1-\chi^{2}\tilde{\eta}^{-1}(\varpi_w))/
    {\varepsilon(1,(\chi^{-2}\tilde{\eta})_w,\psi_w)}$.
\end{thm}

We briefly mention our motivations for this construction.

\subsection*{Euler systems}
A direct consequence of Theorem \ref{thm:intro2}
is that the congruence module of $\euF^\circ_{\fs}$
is torsion over $\I_\fs$ and annihilated by the 
anticyclotomic $p$-adic L-function $\mathcal{L}_{\fs}$.
Though the result is already proved in \cite{Hida1994} and \cite{Hida2007},
we obtain the extra structure that
the pairings $\B(\cdot, \euF^\circ_{\fs})$ define
a system of maps from the space of Hida families to $\I_\fs$
as $\fs$ varies.

In \cite{urban}, such system is used in the case of Eisenstein congruences
to recover the classical Euler system of cyclotomic units.
In up-coming work, we explain how to apply the machinery in \textit{loc.cit}
to obtain an Euler system for $\eta$
and discuss applications towards 
anticyclotomic Iwasawa main conjectures.

\subsection*{Properties toward the limit}

Let $B$ be the definite quaternion algebra over $\F$
defined by the Hilbert symbol $(-1,-1)_\F$,
then $\GUU(2)=B^\times\times_{\mathbf{G}_m} \Res_{\K/\F}\mathbf{G}_m$.
One can check that 
the pull-back theta lifts of $\eta$ constructed in this article
can be interpreted as the Jacquet-Langlands transfer
of the classical automorphic induction associated 
to a Hecke character.
And the toric periods we used to prove the primitivity 
can also be interpreted as the toric period of the automorphic induction on $\GL_2(\F)$.
Such period identities have been investigated in \cite{CHAN2020},
with potential applications toward \cite{Bertolini2013} through a limiting process.

Particularly in our case, one may consider specializing $\euF^\circ_{\fs}$ 
to the $p$-adic avatar of an algebraic Hecke character $\eta$ of infinity type $-\Sigma$.
The resulting $p$-adic modular form
is then a $p$-adic analogue of Jacquet-Langlands transfer of a weight one modular form
that doesn't exist classically,
which still produces $L$-values outside the classical range of interpolation.
We hope to further investigate these $p$-adic modular forms
and their relations to classical construction of special units
as in \cite{Bertolini2012}.

\subsection*{Structure of the article}

After fixing the notations, 
we recollect general results of Weil representations in \S 2.
In particular we explain how to twist 
the classical Rallis inner product formula into a bilinear version that
expressing bilinear pairings
and squares of toric periods
in terms of products of local integrals.
A similar twist is also necessary
for the Waldspurger formula of toric periods in \cite{Bertolini2013} and \cite{Hsieh14},
and is also needed for our arithmetic application 
since complex conjugations doesn't preserve $p$-integrality.

Section \S 3 contains all the computations that are needed for our construction.
In which we pick explicit choices of Schwartz functions
adapted to $\chi$ and an algebraic Hecke character $\eta$
of infinity type $k\geq 0$.
We then compute the local integrals of
the Fourier coefficients of theta lifts on $\UU(2,2)$,
the inner product and toric periods as explained in the previous section.

In section \S 4 we review the algebraic theory of modular forms on the quasi-split 
$\UU(2,2)$.
These results are combined with the Fourier coefficients computations made in \S 5
to construct the measure interpolating pull-back theta lifts in \S 6.
We then use the Hida theory on the definite $\UU(2)$ reviewed in section \S 5
to show that the inner product and toric integral of the pull-back theta lifts
produce the desired $p$-adic L-functions.

\subsection*{Acknowledgement}

The results of this work are parts of the the author's Ph.D. thesis in Columbia University.
The author would like to thank his advisor, Eric Urban, for introducing him to the subject.
The author would also like to thank Professor Ming-Lun Hsieh for his patience and hospitality 
in explaining his works and for his encouragement, especially during the pandemic.
The current version of the article is a slight improvement of the original thesis,
and was inspired by conversations with Professor Kartik Prasanna during the author's 
postdoctorship in the University of Michigan.
Part of the draft of the article was finished when the author visited 
NCTS in Taipei during the summer of 2024 and 2025 and
the author would like to thank the Center for the hospitality
and the support during the period of the time.

\section{Notations}

Throughout the article, $\F$ is a totally real field
and $\K$ is a totally imaginary quadratic extension over $\F$.
Let $\arch=\Hom(\F, \C)$ denote the set of archimedean places of $\F$,
and $\finite$ the set of finite places of $\F$.
We fix an odd prime $p$ that is unramified in $\F$ and satisfies
the ordinary condition:
\begin{equation}\label{cond:ord}\tag{ord}
    \text{Every finite place of $\F$ above $p$ is split in $\K$}.
\end{equation}
Let $\dd_\F$ and $\dd_\K$ denote respectively 
the absolute ideals of different of $\F$ and $\K$;
and $\dd_{\K/\F}$ denotes the relative ideal of different,
which satisfies $\dd_\K=\dd_{\K/\F}\dd_\F$.
For a technical reason, throughout we also assume that  
every finite place $v\in\finite$ which divides $2$ is split in $\K$.

Let $\OF$ and $\OK$ denote respectively the rings of integers
of $\F$ and $\K$.
We use the letter $v$ to denote a place
(either archimedean or finite) of $\F$
and the letter $w$ to denote a place of $\K$.
If $v\in\arch$ is an archimedean place,
let $\F_v=\R$ and $|\cdot|_v$ be the usual absolute value on which.
Similarly, if $w$ is an archimedean place of $\K$,
then we let $\K_w=\C$ and $|\cdot|_w$ be the square
of the usual absolute value, so $|z|_w=|z\bar{z}|_v$ if $w\mid v$.
If $v\in\finite$ is a finite place, we let
$\F_v$ denote the completion of $\F$ at $v$;
$\oo_v$ denote the ring of integers of $\F_v$;
$\varpi_v$ be a uniformizer of the discrete valuation ring $\oo_v$;
$q_v$ be the cardinality of the residue field $\oo_v/(\varpi_v)$;
$|a|_v=q_v^{-\ord_v(a)}$ be the non-archimedean norm on $\F_v$,
where $\ord_v\colon \F_v\to \Z\cup\{\infty\}$
is the normalized discrete valuation such that $\ord_v(\varpi_v)=1$.
We adopt the same notations when $v$ is replaced by a finite place
$w$ of $\K$. This implies $|a|_w=|\Nr_{\K_w/\F_v}(a)|_v$ when $w\mid v$.

Let $\A_\F$ and $\A_\K$ denote respectively the rings of adeles over 
$\F$ and $\K$.
If $G$ is an algebraic group over $\F$ and $g\in G(\A_\F)$,
we let $g_v$ denote the component of $g$ at the place $v$,
or in other word $g=(g_v)_{v\in \arch\cup\finite}$.
In general we write $g_S=(g_v)_{v\in S}$
for any subset of places $S\subset \arch\cup\finite$,
but we also write  $g_\infty=(g_v)_{v\in\arch}$,
$g_f=(g_v)_{v\in\finite}$, and $g_p=(g_v)_{v\mid p}$.
We will often view a vector space $\bV$ over $\F$
as the algebraic group over $\F$ defined by
$\bV(R)\coloneqq \bV\otimes_\F R$ when $R$ is an $\F$-algebra.
We adopt the same notations for algebraic groups over $\K$.

We let $c\in \Gal(\K/\F)$ denote the unique nontrivial element.
When it is clear from the context 
we use $c(z)$, $z^c$, and $\bar{z}$ interchangeably to denote 
the action of $c$ on $z\in \K$.
We then define
$\A_\K^1=\{z\in \A^\times_\K=
\K\otimes_\F \A_\F \mid z\bar{z}=\Nr_{\K/\F}z=1\}$ and
$\K_v^1=\{z\in \K_v\coloneqq \K\otimes_\F\F_v\mid z\bar{z}=1\}$
for any place $v$ of $\F$.
The $n$-dimensional affine space over $\A_\K$ is denoted by $\A_\K^n$, 
and simply by $\A_\K$ if $n=1$.
Thus $\A_\K^1$ should not be confused with the 1-dimensional affine space.
We make the same distinctions between $\K_v^n$ and $\K_v^1$.

\subsection{CM types}

Throughout the article we fix an embedding $\iota_\infty:\bar{\Q}\to \C$
and an isomorphism $\iota:\C\cong \C_p$.
These fix an embedding $\iota_p=\iota\circ\iota_\infty:\bar{\Q}\to \C_p$.
Let $\bar{\Z}_p$ be the $p$-adic completion of 
the ring of algebraic integers $\bar{\Z}$ in $\C_p$.
We then let $\fm_p$ denote the maximal ideal in $\bar{\Z}_p$
and define $\fm=\iota_p^{-1}(\fm_p)$.

Let $S_p$ and $S_p^\K$ denote respectively 
the set of places of $\F$ and $\K$ that are above $p$
and let $I_\K=\Hom(\K,\bar{\Q})$,
which we identify with  $\Hom(\K,\C)$ and $\Hom(\K,\C_p)$ via
compositions with $\iota_\infty$ and $\iota_p$.
Given $\sigma\in I_\K$,
we let $w_\sigma\in S_p^\K$ denote the place induced by
$\sigma_p\coloneqq \iota_p\circ \sigma\in\Hom(\K,\C_p)$.
Then for $w\in S_p^\K$ we define
\[
    I_w=\{\sigma\in I_\K\mid w=w_\sigma \}=\Hom(\K_w,\C_p).
\]
When $\Sigma$ is a subset of $I_\K$, we define
$\Sigma^c=\{\sigma c\mid \sigma\in \Sigma\}$,
$\Sigma_p=\{w_\sigma\mid \sigma\in \Sigma\}$, and 
$\Sigma_p^c=\{\bw\mid w\in \Sigma_p\}=\{w_\sigma\mid \sigma\in \Sigma^c\}$.
Throughout the article, we fix a $p$-ordinary CM type,
which is a subset $\Sigma\subset I_\K$ such that
\[
    \Sigma\sqcup \Sigma^c=I_\K,\quad
    \Sigma_p\sqcup \Sigma_p^c=S_p^\K.
\]
The existence of such a $\Sigma$ is guaranteed by \eqref{cond:ord}.
We can then identify $\Sigma$ 
either with $\arch=\Hom(\F,\C)$ via restrictions,
or with the set of archimedean places of $\K$.
Similarly we will identify $\Sigma_p$ with the set $S_p$.

\subsection{Characters}

Let $\kappa=\sum_{\sigma\in \Sigma} 
a_\sigma\sigma+b_\sigma\sigma c\in \Z[I_\K]$. We define 
\[
    \kappa(\alpha_\infty)=\alpha_\infty^\kappa=
    \prod_{\sigma\in \Sigma} 
    (\alpha_\sigma)^{a_\sigma}(\bar{\alpha}_\sigma)^{b_\sigma}\in \C^\times,\quad
    \kappa(\alpha_p)=\alpha_p^\kappa=
    \prod_{w\in \Sigma_p}
    \prod_{\sigma\in I_w}
    \sigma_p(\alpha_w)^{a_\sigma}\sigma_p(\alpha_{\bw})^{b_\sigma}
    \in \C_p^\times.
\]
for $\alpha_\infty=(\alpha_\sigma)_{\sigma\in\Sigma}
\in \A_{\K,\infty}^\times$
and $\alpha_p=(\alpha_w,\alpha_{\bw})_{w\in\Sigma_p}\in 
\prod_{w\mid p}\K_w^\times$.
When the meaning is clear from the context, we also 
let $\Sigma$ denote the formal sum $\sum_{\sigma\in\Sigma}\sigma$
and similarly for $\Sigma^c$.
If $\chi\colon \A_\K^\times/\K^\times\to \C^\times$ 
is a Hecke character of $\K$,
we let $\chi_w$ be the component of $\chi$ at a place $w$ of $\K$.
We also write  $\chi_\infty=(\chi_w)_{w\in\Sigma}$,
$\chi_p=(\chi_w)_{w\in S_p^\K}$, and
$\chi_v=(\chi_w)_{w\mid v}$ when $v$ is a place of $\F$.
We say $\chi$ is an algebraic Hecke character of
the infinity type $\kappa$ if
$\chi_\infty(\alpha)=\alpha^\kappa$.
When this is the case we define the $p$-adic avatar 
$\hat{\chi}\colon \A_\K^\times\to \bar{\Z}_p^\times$  of $\chi$ by
\[
    \hat{\chi}(\alpha)=
    \iota(\chi(\alpha)\alpha_\infty^{-\kappa})\alpha_p^{\kappa}.
\]

If $w$ is a finite place of $\K$, 
we let $L(s,\chi_w)$ denote the local L-function  associated to $\chi_w$
and define $L(s,\chi_v)=\prod_{w\mid v}L(s,\chi_w)$
when $v\in \finite$.
If $\eta$ is a character of $\A_\K^1/\K^1$, 
we let $\tilde{\eta}$ denote the base change of $\eta$ to $\A_\K^\times$,
which is the Hecke character defined by 
$\tilde{\eta}(\alpha)\coloneqq \eta(\alpha/\alpha^c)$.
In the case when $\eta$ is the restriction of a Hecke character,
we have $\tilde{\eta}=\eta^{1-c}\coloneqq \eta(\eta^c)^{-1}$,
where we write $\eta^c$ for the Hecke character defined as
$\eta^c(\alpha)=\eta(\bar{\alpha})$.

\subsection{Matrices}

Suppose $m\in\text{M}_{r,s}(\K\otimes_\F R)$,
where $R$ is an $\F$-algebra.
We let $m^\intercal, m^c$, and $m^*$ denote respectively
the transpose, conjugate, and conjugate-transpose of $m$.
In particular we can define the space 
of $n$-by-$n$ Hermitian matrices over $R$ as
\[
    \her_n(R)=\{
    m\in \text{M}_n(\K\otimes_\F R)\mid m=m^*\}.
\]
When $r=s$ and $g\in \GL_r(\K\otimes_\F R)$ is invertible, we write
$g^{-\intercal}=(g^{-1})^\intercal$ and $g^{-*}=(g^{-1})^*$.
We write $\mtr(m)$ for the trace of a square matrix $m$,
and reserve $\Tr$ for the traces between fields extensions.

When $v$ is a place of $\F$ that splits into $w\bw$ in $\K$
there exists isomorphisms $\K_w\cong \F_v\cong \K_{\bw}$,
which defines an isomorphism
$\K_v\coloneqq \K\otimes_\F\K_v=\K_w\times\K_\bw\cong\F_v^2$.
Let $\iota_w\colon \K_v\to \F_v$ denote the projection to the 
component at $w$ and define $z_w=\iota_w(z)$ when $z\in \K_v$.
We define $\iota_\bw$ and $z_\bw$ similarly
and adopt the same notations when $\K$ is replaced 
by any vector spaces over $\K$.
For example, we can also define 
$\her_n(\F_v)=\{m\in M_n(\K_v)\mid m_\bw=m_w^\intercal\}$.

\subsection{Unitary groups}
When $(\W, \langle \cdot,\cdot\rangle_\W)$ 
is a skew-Hermitian space over $\K$ of dimension $n$.
We fix a basis $\{\ee_1,\cdots,\ee_n\}$ of $\W$ such that
the skew-Hermitian matrix  
\begin{equation*}
    \bdelta\coloneqq (\langle \ee_i,\ee_j\rangle_\W)=
    \diag(\delta_1,\cdots,\delta_n)
\end{equation*}
is diagonal.
We identify
$\W$ with row vectors in $\K^n$ via the basis,
and define the unitary similitude group $\GUU(\W)$,
which is an algebraic group over $\F$, as
\[
    \GUU(\W)(R)=\{g\in \GL_{n}(\K\otimes_\F R)\mid g\bdelta g^*=\nu(g)\bdelta
    \text{ for some } \nu(g)\in R^\times\}.
\]
Let $\nu\colon\GUU(\W)\to \mathbb{G}_m$ be the character
that sends $g$ to the similitude factor $\nu(g)$
so that $\UU(\W)=\ker(\nu)$.
When $v$ splits into $w\bw$ in $\K$, we have the isomorphism
\[
    (\iota_w, \nu)\colon \GUU(\W)(\F_v)\cong \GL_n(\F_v)\times \F_v^\times
\]
for $\iota_w$ defined above.
Consequently $\iota_w\colon \UU(\W)(\F_v)\to \GL_n(\F_v)$
is also an isomorphism.

Let $-\W$ be the skew-Hermitian space over $\K$ of dimension $n$
with basis $\{\ee_1^-,\cdots \ee_n^-\}$ 
such that the associated skew-Hermitian matrix is $-\bdelta$.
We identify $\GUU(\W)=\GUU(-\W), \UU(\W)=\UU(-\W)$, and define
\[
    G(\UU(\W)\times \UU(-\W))(R)=
    \{
        (g_1,g_2)\in \GUU(\W)(R)\times\GUU(-\W)(R)\mid 
        \nu(g_1)=\nu(g_2)
    \}.
\]

Then we define the skew-Hermitian space $\V=\W\oplus(-\W)$
and fix the basis
\begin{equation}\label{def:basisV}
    \xx^i=(2\delta_i)^{-1}(\ee_i-\ee_i^-), \quad
    \yy^i=(\ee_i+\ee_i^-),\quad i=1,\cdots,n,
\end{equation}
under which the associated skew-Hermitian matrix is equal to
$w_n\coloneqq\smat{&\id_n\\-\id_n&}$.
Therefore the subspaces
$Y_\K=\bigoplus_{i=1}^n\K\yy^i$ and
$X_\K=\bigoplus_{i=1}^n\K\xx^i$ are maximal isotropic in $\V$.
We identify $\V$ with row vectors in $\K^{2n}$ via the above basis
and define the unitary similitude group $\GUU(\V)=\GUU(n,n)$ as
\[
    \GUU(\V)(R)=\{g\in \GL_{2n}(\K\otimes_\F R)\mid 
    gw_ng^*=\nu(g)w_n \text{ for some } \nu(g)\in R^\times\}.
\]
By definition, there exists a natural group homomorphism
$\iota\colon G(\UU(\W)\times \UU(-\W))\to \GUU(\V)$
\begin{equation}\label{eq:iota}
    \iota(g_1,g_2)=
    \smat{(2\bdelta)^{-1}&-(2\bdelta)^{-1}\\\id_n&\id_n}
    \smat{g_1&\\&g_2}
    \smat{(2\bdelta)^{-1}&-(2\bdelta)^{-1}\\\id_n&\id_n}^{-1}.
\end{equation}
We again let $\nu$ denote the similitude character
so that $\UU(\V)=\UU(n,n)=\ker(\nu)$.
And when $v$ splits into $w\bw$ in $\K$, we similarly have the isomorphisms
\[
    (\iota_w, \nu)\colon \GUU(\V)(\F_v)\cong
    \GL_{2n}(\F_v)\times \F_v^\times \quad
    \iota_w\colon \UU(\V)(\F_v)\cong \GL_{2n}(\F_v).
\]

We also define $(V, (\cdot,\cdot)_V)$ to be the Hermitian space
given by $V=\K$ and $(z,z')_V=\bar{z}z'$.
Then the similitude group $\GUU(V)$ is simply the Weil restriction
$R_{\K/\F}\mathbb{G}_m$, where the similitude character
is equal to $\Nr_{\K/\F}$, and $\UU(V)$ is equal to $\K^1$, 
which we view as an algebraic group over $\F$ by
\[
    \K^1(R)=\{z\in (\K\otimes_\F R)^\times \mid z\bar{z}=1\}.
\]
We shall make the same identifications for 
the unitary group associated to any 
1-dimensional Hermitian or skew-Hermitian spaces. At last, we define
\[
    G(\UU(\V)\times \UU(V))(R)
    =\{(g,h)\in \GUU(\V)(R)\times \GUU(V)(R)\mid 
    \nu(g)=\nu(h)\}
\]
and 
$\GUU(\V)(R)^+
=\{ g\in \GUU(\V)(R)\mid \nu(g)\in \Nr_{\K/\F}(\K\otimes_\F R)^\times\}$,
which is also the image of the projection of 
$G(\UU(\V)\times \UU(V))(R)$ to $\GUU(\V)(R)$.

\subsection{Measure}

Let $\psi\colon \A_\F/\F\to \C$ 
be the canonical additive character such that
$\psi_v(x)=e^{2\pi i x}$ for any archimedean place $v\in \arch$.
For any place $v$ of $\F$, we let 
$dx_v$ denote the self-dual Haar measure on $\F_v$ with respect to $\psi_v$
and $d^\times x_v$ denote the Haar measure on $\F_v^\times$ given by
\[
    d^\times x_v=
    \begin{cases}
        d^\times x_v=|x|_v^{-1}\,dx_v,& v\in \arch;\\
        d^\times x_v=(1-q_v^{-1})^{-1}\cdot|x|_v^{-1}\,dx_v,& v\in \finite.
    \end{cases}
\]
We recall that the measures have the following properties.
\begin{itemize}
    \item $dx_v$ is the usual Lebesgue measure on $\R$ when $v\in \arch$.
    \item $\vol(\oo_v,dx_v)=\vol(\oo_v^\times, d^\times x_v)=|\dd_\F|_v^{1/2}$ when $v\in\finite$.
\end{itemize}
For a place $w$ of $\K$, we let $dz_w$ denote the self-dual Haar measure
on $\K_w$ with respect to $\psi_w$, 
where $\psi_\K\coloneqq \psi\circ \Tr_{\K/\F}$ and 
$\psi_w$ is its component at $w$,
and $dz_w^\times$ denote the Haar measure on $\K_w^\times$
defined similarly as above.
Then $dz_w$ is twice of the usual Lebesgue measure on $\C$
when $w$ is archimedean and 
$\vol(\oo_w,dz_w)=\vol(\oo_w^\times, d^\times z_w)=|\dd_\K|_w^{1/2}$
when $w$ is finite.

If $v$ is a place of $\F$, we let $dh_v$ be the quotient measure
on $\K_v^1\coloneqq\K^1(\F_v)=\{z\in \K\otimes_\F\F_v\mid z\bar{z}=1\}$
associated to the exact sequence
\[
    1\to \A_\F^\times\to
    \A_\K^\times \xrightarrow{z\mapsto z/\bar{z}} \A_\K^1\to 1.
\]
We note that the measure satisfies the following properties.
\begin{itemize}
    \item $\vol(\K_v^1, dh_v)=2\pi$ when $v\in \arch$.
    \item $\vol(\K_v^1, dh_v)=e_v|\dd_{\K/\F}|^{1/2}_w|\dd_\F|_v^{1/2}$
    when $v\in\finite$ is non-split,
    $w$ is the unique place above $v$, 
    and $e_v$ is the ramification index of $\K_w/\F_v$.
    \item $dh_v=dx_v^\times$ under the isomorphism 
    $\iota_w\colon \K_v^1\cong \oo_v^\times$ 
    when $v$ splits into $w\bw$ in $\K$.
    \item $\vol([\A_\K^1], dh)=2L(1,\qch_{\K/\F})$, where
    $\qch_{\K/\F}$ is the quadratic Hecke character of
    $\A_\F^\times/\F^\times$ associated to $\K/\F$ 
    by the global class field theory.
\end{itemize}
On the other hand, if $G$ is the unitary group
associated to a (skew-)Hermitian space over $\K$ of dimension
strictly greater than 1, we let $dg$ be the Tamagawa measure
on $G(\A_\F)$. This implies in particular $\vol([G])=2$.

\section{Weil representations}

In this section we recall the definition
of Weil representations
of the dual reductive pairs
$\UU(\V)\times \UU(V)$ and
$\UU(\W)\times \UU(V)$
and provide an ad-hoc definition 
of pull-back theta lifts from $\UU(V)$ to $\UU(\W)$
via $\UU(\V)$.
Since $\UU(V)=\K^1$, 
the pull-back theta lifts can be easily related to the usual ones
by a see-saw argument.
We then apply the Rallis inner product formula
to obtain the two formulae
in Proposition \ref{prop:Rallis} 
and Proposition \ref{prop:period}, 
which express respectively
the Petersson inner products and
the toric period integrals of pull-back theta lifts
in terms of products of local integrals.

Let $\bV$ be the symplectic space over $\F$ underlying $V\otimes_\K\V=\V$.
Then the additive character $\psi$ determines a Weil representation 
$\omega^{\square}_\psi$ 
of a metaplectic cover of $\Sp(\mathbb{V})$.
Upon choosing an auxiliary Hecke character 
$\chi$ of $\A^\times_\K/\K^\times$ which satisfies 
$\chi\mid_{\A_\F}=\qch_{\K/\F}$,
we can define a splitting of the metaplectic cover over $\UU(\V)$
and the Weil representation 
$\omega^\square_{\psi,\chi}$ of $\UU(\V)\times \UU(V)$ \cite{Ku94}.

To be more precise, 
$\omega^\square_{\psi,\chi}(g,h)$
is the representation of $\UU(\V)\times \UU(V)(\A_{\F})$
acting on $\bs(\A_{\K}^n)$, the space of Bruhat-Schwartz functions
on $\A_\K^n$.
Locally at each place $v$ of $\F$,
the action of $\omega^\square_{\psi,\chi}(g,h)$
on $\varphi\in \bs(\K_v^n)$
is determined by the formulae
\begin{equation}
\begin{split}\label{eq:Weil}
    &\omega^\square_{\psi,\chi}(\smat{A&\\&A^{-*}},h)
    \varphi(z)=\chi(\det A)|\det A|_\K^{1/2}\varphi(h^{-1}zA),
    \quad A\in \GL_n(\K_v)\\
    &\omega^\square_{\psi,\chi}(\smat{\id_n&B\\&\id_n},1)
    \varphi(z)=\psi\big(\mtr(Bz^*z)\big)\varphi(z),
    \quad B\in \her_n(\F_v)\\
    &\omega^\square_{\psi,\chi}(\smat{&-\id_n\\\id_n&},1)
    \varphi(z)=\gamma_v^{-n}
    \int_{\K_v^n}\varphi(w)\psi(-\Tr_{\K_v/\F_v}(wz^*))\,dw.
\end{split}
\end{equation}
Here $dw=\prod_{i=1}^ndw_{i}$ is the product of 
the self-dual measures $dw_i$ on $\K_v$ 
with respect to $\psi_v\circ\Tr_{\K_v/\F_v}$;
and $\gamma_v\in \mu_8$ is the Weil index
of the quadratic space over $\F_v$ underlying $\K_v$
given by $(z,z')\coloneqq\Tr_{\K_v/\F_v}(\bar{z}z')$.
Let $\Delta=\delta^2\in \F^\times$
for any $\delta=-\delta^c\in \K_v^\times$, then by \cite[A.1]{Ichino05}
\begin{equation*}
    \gamma_v=
    \gamma(-\Delta,\psi_v)
    \cdot\gamma(-1,\psi_v)^{-1}=
    \gamma(\Delta,\psi_v)^{-1}.
\end{equation*}
\begin{rem}
    Let $\langle \cdot,\cdot \rangle_\V$
    be the skew-Hermitian pairing on $\V$.
    Our choice of the symplectic pairing
    $\llangle\cdot,\cdot\rrangle_{\mathbb{V}}
    \coloneqq\Tr_{\K/\F}(\langle\cdot,\cdot\rangle_\V)$ on $\bV$
    is different from that of \cite{Y97}, \cite{hks}
    and \cite{Ku94} by a factor of $2$.
\end{rem}

\subsection{Doubling method}
Let $\bW$ be the symplectic space over $\F$
underlying $V\otimes_\K\W=\W$ and
fix any polarization $\bW=\bX+\bY$.
The character $\psi$ also determines a Weil representation $\omega_\psi$
of a metaplectic cover of $\Sp(\bW)$ acting on $\bs(\bX(\A_\F))$,
the space of Bruhat-Schwartz functions on $\bX(\A_\F)$.
The same auxiliary character $\chi$
also determines a splitting 
of $\UU(\W)\times \UU(V)$ and
a Weil representation $\omega_{\psi,\chi}$ of which
acting on $\bs(\bX(\A_\F))$
through the doubling method \cite{hks}.
In short, the representation is obtained from
the Weil representration $\omega^\square_{\psi,\chi}$
of $\UU(\V)\times \UU(V)$
through the doubling homomorphism \eqref{eq:iota}.

\begin{rem}
In principle one needs a pair of auxiliary characters
$(\chi_\W,\chi_V)$ to define the 
Weil representation of $\UU(\W)\times \UU(V)$.
Here we simply identify $\UU(V)$ with the center of $\UU(\W)$ and put
\[
    \omega_{\psi,\chi}(g,h)\coloneqq 
    \omega_{\psi,\chi}(h^{-1}g).
\]
By \cite[Corollary A.8]{hks},
this amounts to choosing $\chi_\W=\chi$ 
and $\chi_V=\chi^n$.
\end{rem}

The Weil representations 
$\omega^\square_{\psi,\chi}(g,h)$ and $\omega_{\psi,\chi}(g,h)$
are unitary with respect to the Hermitian pairings
$(\varphi,\varphi')=\int_{\K_v^n}\varphi(z)\overline{\varphi'(z)}\,dz$ and
$(\phi,\phi')=\int_{\bX(\F_v)}\phi(x)\overline{\phi'(x)}\,dx$
on $\bs(\K_v^n)$ and $\bs(\bX(\F_v))$ respectively,
so we may identify the contragradient $\omega^\vee_{\psi,\chi}$
as the representation of $\UU(\W)\times \UU(V)$
acting on $\bs(\bX(\A_\F))$ via
$\omega_{\psi,\chi}^\vee(g,h)\overline{\phi}=\overline{\omega_{\psi,\chi}(g,h)\phi}$
and similarly for $\omega^{\square\vee}_{\psi,\chi}(g,h)$.
In fact, it can be checked from the recipe in \cite{Ku94} and \cite{hks} that
$\omega_{\psi,\chi}^\vee(g)=\omega_{-\psi,\chi^{-1}}(g)$.


Given $z\in K_v^n\cong X_\K(\K_v)$
(via the basis $\xx^i$),
we identify $z$ with its image under
the projection $\V=\W+(-\W)\to \W$
and decompose $z=x+y$ with respect to
$\bW=\bX\oplus \bY$.
Write $\llangle\cdot,\cdot\rrangle_\bW
=\Tr_{\K_v/\F_v}(\langle\cdot,\cdot\rangle_\W)$.
Then the Weil representations
$\omega^\square_{\psi,\chi}$ and
$\omega_{\psi,\chi}$ are related by the intertwining isometry
\begin{equation}\label{eq:intertwine}
    \delta_\psi:\bs(\bX(\F_v))\otimes \bs(\bX(\F_v))\to \bs(\K_v^n)
    \quad
    \delta_\psi(\phi\otimes\phi^-)(z)=\int_{\mathbb{X}(\F_v)} 
    \psi_v(2\llangle u, y\rrangle_\bW)\phi(u+x)\phi^-(u-x)\,du.
\end{equation}

Precisely, 
let $\varphi_i=\delta_\psi(\phi_i\otimes\phi_i^-)$ for $i=1,2$, then
\begin{itemize}
    \item $(\varphi_1,\varphi_2)=(\phi_1,\phi_2)(\phi_1^-,\phi_2^-)$.
    \item $\omega^\square_{\psi,\chi}(\iota(g_1,g_2),h)\varphi=
    \delta_\psi\big(\omega_{\psi,\chi}(g_1,h)\phi\otimes
    \chi(\det g_2)
    \omega_{\psi,\chi}(g_2,h)\phi^-\big)$.
\end{itemize}
In other word, the Weil representations
fit into the see-saw diagram
\[
    \begin{tikzcd}
        \UU(\V) \arrow[dr,dash]& \UU(V)\times\UU(V)\\
        \UU(\W)\times\UU(\W)\arrow[u,"\iota"]\arrow[ur,dash] & \UU(V) \arrow[u,"\textnormal{diag}"]
    \end{tikzcd}
\]
Consequently, let
$\theta_\varphi^{\square}(g,h)=
\sum_{z\in \K^n}\omega^{\square\bullet}_{\psi,\chi}(g,h)\varphi(z)$ and
$\theta_\phi^\bullet(g,h)=\sum_{x\in \bX}\omega_{\psi,\chi}(g,h)\phi(x)$
be the theta kernels associated to 
$\varphi\in \bs(\A_\K^n)$ and
$\phi\in \bs(\bX(\A_\F))$, then we have
\begin{align}
    \theta_\varphi^\square(\iota(g_1,g_2),h)=\chi(\det g_2)\cdot 
    \theta_{\phi}(g_1,h)\theta_{\phi^-}^\vee(g_2,h),\quad
    & \text{ if }\varphi=\delta_\psi(\phi\otimes\phi^-);\label{eq:res_kernelB}\\
    \theta_{\varphi'}^\square(\iota(g_1,g_2),h)=\chi(\det g_2)\cdot 
    \theta_{\phi}(g_1,h)\overline{\theta_{\phi^-}(g_2,h)}\label{eq:res_kernelH},\quad
    & \text{ if }\varphi'=\delta_\psi(\phi\otimes\bar{\phi}^-).
\end{align}
Here $\theta^\vee_{\phi^-}$ denote the theta kernel
similarly defined with respect to the contragredient
$\omega^\vee_{\psi,\chi}(g,h)$.

\subsection{Pull-back theta lifts}

We now fix the polarization 
$\bW=\bX\oplus\bY$ given by
\[
    \bX=\bigoplus_{i=1}^n \F\delta_i\ee_i,\quad
    \bY=\bigoplus_{i=1}^n \F\ee_i.
\]
We also suppress $\psi$ and $\chi$ from the Weil representations
and write 
$\omega^\square(g,h), \omega^{\square\vee}(g,h)$ 
for the representations of 
$\UU(\V)\times \UU(V)$ on $\bs(\A_\K^n)$ and
$\omega(g,h), \omega^\vee(g,h)$
for the representations of 
$\UU(\W)\times \UU(V)$ on $\bs(\bX(\A_\F))$.

Let $T\coloneqq\prod_{i=1}^n\UU(\W_i)$
be the product of the unitary groups $\UU(\W_i)$ associated to 
the 1-dimensional skew-Hermitian subspaces $\W_i=\K\ee_i\subset \W$,
which we identify with the diagonal torus in $\UU(\W)$.
If we fix the polarization
$\bX_i=\F\delta_i\ee_i$ and $\bY_i=\F\ee_i$
for the symplectic space $\bW_i$ underlying $\W_i$,
then for $\bbeta=\diag(\beta_1,\cdots,\beta_n)\in T$,
where $\beta_i\in \UU(\W_i)$,
the action of $\omega(\bbeta,h)$ is given by the tensor product of
the Weil representations of $\UU(\W_i)\times \UU(V)$
acting on $\bs(\bX_i(\A_\F))$.
By abuse of notation we write these as 
$\omega(\beta_i,h)=\omega(h^{-1}\beta_i)$ and 
$\omega^\vee(\beta_i,h)=\omega^\vee(h^{-1}\beta_i)$.
Note that the theta lift of a character $\eta(h)$ of $\UU(V)(\A_\F)$
with respect to the theta kernel associated to 
$\phi_i\in \bs(\bX_i(\A_\F))$ is
\begin{equation}\label{eq:dim_one_lift}
    \theta_{\phi_i}(\eta)(\beta_i)\coloneqq
    \int_{[\UU(V)]}\theta_{\phi_i}(\beta_i,h)\eta(h)\,dh=
    \theta_{\phi_i}(\eta)(1)\cdot \eta(\beta_i)
\end{equation}
and vice versa for theta lifts of characters of $\UU(\W_i)$.
The non-vanishing of such lifts from $\UU(1)$ to $\UU(1)$
is discussed in \cite{Y97} 
via the Rallis inner product formula.

\begin{defn}\label{def:pullback}
    Let $\eta$ and $\nu_1,\cdots,\nu_n$ be characters of $\A_\K^1/\K^1$.
    We view $\eta$ and $\bnu=\{\nu_i\}_{i=1}^n$
    as characters of $\UU(V)(\A_\F)$ and $T(\A_\F)$ respectively,
    where $\bnu(\bbeta)\coloneqq\prod_{i=1}^n\nu_i(\beta_i)$
    for $\bbeta=\diag(\beta_1,\cdots,\beta_n)$.
    We define the pull-back theta lifts of the character $\eta$
    to $\UU(\W)$ via $\UU(\V)$, with respect to $\bnu$ and the theta kernel
    associated to $\varphi(z)\in \bs(\A_\K^n)$, by
    \begin{equation*}
        \theta^\square_\varphi(\eta,\bnu)(g)=
        \int_{[T]}
        \theta^\square_\varphi(\eta)(\iota(g,\bbeta))
        \bnu(\bbeta)\,d\bbeta=
        \int_{[T]}\int_{[\UU(V)]}
        \theta^\square_\varphi(\iota(g,\bbeta),h)\eta(h)\,dh\,
        \bnu(\bbeta)\,d\bbeta.
    \end{equation*}
\end{defn}
\begin{prop}\label{prop:pullback}
    For $\eta$ and $\nu_1,\cdots,\nu_n$ as above,
    the pull-back theta lift $\theta^\square_\varphi(\eta,\bnu)(g)$
    belongs to the space of the usual theta lifts
    of $\UU(\W)\times \UU(V)$ of the character $\chi^n\nu\eta$,
    where $\nu(h)=\prod_{i=1}^n\nu_i(h)$ is the product 
    of the characters $\nu_1,\cdots,\nu_n$.
\end{prop}
\begin{proof}
Reduce to the case when the Schwartz function $\varphi\in \bs(\A_\K^n)$
is the image under $\delta_\psi$ of a pure tensor
$\phi\otimes \phi^-$ for $\phi,\phi^-\in \bs(\bX(\A_\F))$ and
$\phi^-=\otimes \phi^-_i, \phi^-_i\in \bs(\bX_i(\A_\F))$,
for which the associated theta kernel is
\[
\theta^\square_\varphi(\iota(g,\bbeta),h)=
\theta_\phi(g,h)\cdot 
\prod_{i=1}^n\chi(\beta_i)\theta_{\phi^-_i}^\vee(\beta_i,h)
\]
by \eqref{eq:res_kernelB}.
It then follows from the see-saw
\[
    \begin{tikzcd}
        \UU(\V) \arrow[dr,dash]& \UU(V)\times\UU(V)^n\\
        \UU(\W)\times T\arrow[u,"\iota"]\arrow[ur,dash] & \UU(V) \arrow[u,"\textnormal{diag}"]
    \end{tikzcd}
\]
that the pull-back theta lift 
$\theta^\square_\varphi(\eta,\bnu)(g)$ is
\begin{equation*}
    \int_{[\UU(V)]}
    \theta_\phi(g,h)\cdot \prod_{i=1}^n
    \int_{[\UU(\W_i)]}
    \chi\nu_i(\beta_i)\theta_{\phi^-_i}^\vee(\beta_i,h)\,d\beta_i\, \eta(h)\,dh
    \overset{\eqref{eq:dim_one_lift}}{=}
    \prod_{i=1}^n\theta_{\phi^-_i}^\vee(\chi\nu_i)(1)\cdot 
    \int_{[\UU(V)]}\theta_\phi(g,h)\chi^n\nu\eta(h)\,dh.
\end{equation*}
\end{proof}

\subsection{Inner products and toric periods}

From now on we assume that $\W$ is anisotropic,
so that Petersson inner products
of automorphic forms of $\UU(\W)$ are well-defined.
Suppose $\varphi_1,\varphi_2\in \bs(\A_\K^n)$
are Schwartz functions of the form
$\delta_\psi(\phi_j\otimes\phi^-_j)$ for $j=1,2$,
where $\phi_j,\phi^-_j\in \bs(\bX(\A_\F))$
and $\phi^-_j=\otimes_{i=1}^n\phi^-_{ji}$.
It follows from the same computation in the proof of 
Proposition \ref{prop:pullback} that
\begin{multline}\label{eq:innerproduct_1}
    \int_{[\UU(\W)]}
    \theta^\square_{\varphi_1}(\eta,\bnu)(g)
    \overline{\theta^\square_{\varphi_2}(\eta,\bnu)(g)}
    \,dg\\=
    \prod_{i=1}^n 
    \theta_{\phi^-_{1i}}^\vee(\chi\nu_i)(1)
    \overline{\theta_{\phi^-_{2i}}^\vee(\chi\nu_i)(1)}
    \cdot 
    \int_{[\UU(\W)]}
    \theta_{\phi_1}(\chi^n\nu\eta)(g)
    \overline{\theta_{\phi_2}(\chi^n\nu\eta)(g)}
    \,dg.
\end{multline}
Since $\UU(\W)$ and each $\UU(\W_i)$ are anisotropic
and thus in Weil's convergence range for Siegel-Weil formula,
assume further that all Schwartz functions above
are pure tensors of Schwartz functions at local places, 
we can apply the Rallis inner product formula
(cf \cite{MLS} or \cite[Thm 2.6.]{Y97} when $n=1$)
to each product above and obtain
\begin{align*}
    &\frac{2}{\vol([\UU(\W)])}
    \int_{[\UU(\W)]}
    \theta_{\phi_1}(\chi^n\nu\eta)(g)
    \overline{\theta_{\phi_2}(\chi^n\nu\eta)(g)}\,dg=
    \vol([\A_\K^1])\cdot
    \prod_v\int_{\K_v^1}(\omega(\id_n,h)\phi_{1,v},\phi_{2,v})
    \chi^n\nu\eta_v(h)\,dh,\\
    &\frac{2}{(\vol[\A_\K^1])}
    \int_{[\UU(V)]}
    \theta^\vee_{\phi^-_{1i}}(\chi\nu_i)(h)
    \overline{\theta^\vee_{\phi^-_{2i}}(\chi\nu_i)(h)}\,dh=
    2\cdot \theta^\vee_{\phi^-_{1i}}(\chi\nu_i)(1)
    \overline{\theta^\vee_{\phi^-_{2i}}(\chi\nu_i)(1)}\\
    &\phantom{\frac{2}{(\vol[\A_\K^1])}
    \theta^\vee_{\phi^-_{1i}}(\chi\nu_i)(h)}=
    \vol([\A_\K^1])\cdot\prod_v\int_{\K^1_v}(\omega^\vee
    (\beta_i,1)\phi^-_{1i,v},\phi^-_{2i,v})
    (\chi\nu_i)_v(\beta_i)\,d\beta_i\\
    &\phantom{\int_{[\UU(V)]}
    \overline{\theta^\vee_{\phi^-_{2i}}(\chi\nu_i)(h)}\,dh
    }=
    \vol([\A_\K^1])\cdot\prod_v\int_{\K^1_v}(\omega^\vee
    (\beta_i,h)\phi^-_{1i,v},\phi^-_{2i,v})
    (\chi\nu_i)_v(\beta_i)\,d\beta_i \cdot \chi\nu_i(h^{-1}).
\end{align*}
We remark that it is more common to use the Tamagawa measures
for the Siegel-Weil formula. This difference between measures
is reflected by the fractions of volumes in the front of above equations.

Since $\delta_\psi$ is an isometry and
$\omega^\square(\iota(g_1,g_2),h)\cong \omega(g_1,h)\chi(\det g_2)\omega^\vee(g_2,h)$,
we see that
\[
    (\omega(\id_n,h)\phi_{1,v},\phi_{2,v})
    \prod_{i=1}^n
    (\omega^\vee(\beta_i,h)\phi^-_{1i,v},\phi^-_{2i,v})(\chi\nu_i)_v(\beta_i)=
    (\omega^\square(\iota(\id_n,\bbeta),h)\varphi_{1,v},\varphi_{2,v})\bnu(\bbeta).
\]
Now the lemma below is a consequence of 
\eqref{eq:innerproduct_1} and the Rallis inner product formula.

\begin{lem}\label{lem:HermRallis}
    Assume that  $\varphi_1,\varphi_2\in \bs(\A_\K^n)$
    are pure tensor of Schwartz functions in $\bs(\K_v^n)$, then 
    \begin{multline*}
        \frac{2^{n+1}}{\vol[\UU(\W)]}
        \int_{[\UU(\W)]}
        \theta^\square_{\varphi_1}(\eta,\bnu)(g)
        \overline{\theta^\square_{\varphi_2}(\eta,\bnu)(g)}\,dg\\=
        \vol[\A_\K^1]^{n+1}\cdot 
        \prod_v\int_{T\times \UU(V)(\F_v)}
        (\omega^\square(\iota(1,\bbeta),h)
        \varphi_{1,v},\varphi_{2,v})\eta_v(h)\bnu_v(\bbeta)\,d\bbeta dh.
    \end{multline*}
\end{lem}

On the other hand, 
let $\mu_1,\cdots,\mu_n$ be characters of $\A^1_\K/\K^1$ and 
define the characters 
$\bmu=\{\mu_i\}_{i=1}^n$ of $T(\A_\F)$
and $\mu(h)=\prod_{i=1}^n\mu_i(h)$ of $\UU(V)(\A_\F)$ as above.
We would also like to consider the toric period integral 
of the pull-back theta lifts given by
\begin{equation}\label{eq:defnperiod}
    P_{\bmu}(\theta^\square_\varphi(\eta,\bnu))\coloneqq
    \int_{[T]}\theta^\square_\varphi(\eta,\bnu)(\balpha)\bmu(\balpha)\,d\balpha.
\end{equation}
\begin{lem}\label{lem:Hermperiod}
    Assume that  $\varphi_1,\varphi_2\in \bs(\A_\K^n)$
    are pure tensor of Schwartz functions in $\bs(\K_v^n)$, then 
    \begin{equation*}
        P_{\bmu}(\theta^\square_{\varphi_1}(\eta,\bnu))
        \overline{P_{\bmu}(\theta^\square_{\varphi_2}(\eta,\bnu))}
        =\frac{\vol[\A_\K^1]^{2n+2}}{4^n}
       \prod_v\int_{T\times T(\F_v)}
        (\omega^\square(\iota(\balpha,\bbeta),\id)
        \varphi_{1,v},\varphi_{2,v})\bmu_v(\balpha)\bnu_v(\bbeta)\,d\balpha\bbeta
    \end{equation*}
    and is nonzero only when the product 
    $\chi^n\mu\nu\eta$ is the trivial character.
\end{lem}
\begin{proof}
    It suffices to consider the case when 
    $\varphi_1,\varphi_2$ are images under $\delta_\psi$ of
    $\phi_j\otimes\phi^-_j$ for $j=1,2$, where
    $\phi_j,\phi_j^-\in \bs(\bX(\A_\F))$ and 
    $\phi_j=\otimes_{i=1}^n\phi_{ji}, \phi^-_j=\otimes_{i=1}^n\phi^-_{ji}$.
    Then 
    $\theta^\square_{\varphi_j}(\iota(\balpha,\bbeta),h)=
    \prod_{i=1}^n\theta_{\phi_{ji}}(\alpha_i,h)
    \theta^\vee_{\phi^-_{ji}}(\beta_i,h)\chi(\beta_i)$ and 
    \[
        P_{\bmu}(\theta^\square_{\varphi_1}(\eta,\bnu))=
        \prod_{i=1}^n\theta_{\phi_{1i}}(\mu_i)(1)
        \theta^\vee_{\phi^-_{2i}}(\chi\nu_i)(1)\cdot 
        \int_{[\UU(V)]}\chi^n\mu\nu\eta(h)\,dh,
    \]
    which is nonzero only if $\chi^n\mu\nu\eta\equiv 1$.
    The lemma then follows again from the Rallis inner product formula
    \begin{align*}
        &2\cdot \theta_{\phi_{1i}}(\mu_i)(1)
        \overline{\theta_{\phi_{2i}}(\mu_i)(1)}=
        \vol([\A_\K^1])\cdot\prod_v\int_{\K^1_v}(\omega
        (\alpha_i,1)\phi_{1i,v},\phi_{2i,v})
        (\mu_i)_v(\alpha_i)\,d\alpha_i,\\
        &2\cdot \theta^\vee_{\phi^-_{1i}}(\chi\nu_i)(1)
        \overline{\theta^\vee_{\phi^-_{2i}}(\chi\nu_i)(1)}=
        \vol([\A_\K^1])\cdot\prod_v\int_{\K^1_v}(\omega^\vee
        (\beta_i,1)\phi^-_{1i,v},\phi^-_{2i,v})
        (\chi\nu_i)_v(\beta_i)\,d\beta_i,
    \end{align*}
    and that
    $\prod_{i=1}^n
    (\omega^\vee(\alpha_i,1)\phi_{1i,v},\phi_{2i,v})
    (\omega^\vee(\beta_i,1)\phi^-_{1i,v},\phi^-_{2i,v})\chi_v(\beta_i)=
    (\omega^\square(\iota(\balpha,\bbeta),1)\varphi_{1,v},\varphi_{2,v})$.
\end{proof}

\subsection{Bilinear variants}

For the application of the article it is necessary
to avoid taking the complex conjugation,
which does not preserve $p$-integrality in general.
Therefore we need to provide bilinear versions
of the inner product formula and the toric period integral.

Let $\varphi^\flat(z)\coloneqq \varphi(-\bar{z})$
for $\varphi(z)\in \bs(\K^n_v)$.
We define a bilinear pairing on $\bs(\K_v^n)$ by
\begin{equation}\label{eq:bilinear}
    (\varphi_{1},\varphi_{2})^\flat\coloneqq
    (\varphi_{1},\bar{\varphi}^\flat_{2})=
    \int_{\K^n_v}\varphi_{1}(z)\varphi_{2}(-\bar{z})\,dz.
\end{equation}
\begin{lem}\label{lem:J}
    The element $J=\smat{\id_n&\\&-\id_n}\in \GUU(\V)(\F_v)$
    satisfies the following properties.
    \begin{enumerate}
        \item $J\iota(g_1,g_2)J=\iota(g_2,g_1)$
        for $g_1,g_2\in \UU(\W)$.
        \item $\omega^\square_{\psi,\chi}(JgJ)=
        \chi(\det g)\omega^{\square\, \vee}_{\psi,\chi}(g)
        =\chi(\det g)\omega^\square_{-\psi,\chi^{-1}}(g)$
        for $g\in \UU(\V)$.
        \item $(\omega^\square(g)\varphi_1,
        \omega^\square(J\bar{g}J)\varphi_2)^\flat=
        (\varphi_1,\varphi_2)^\flat$ for $g\in \UU(\V)$.
    \end{enumerate}
    
\end{lem}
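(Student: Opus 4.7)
For part (1), the argument is a direct matrix computation. Writing $P = \smat{(2\bdelta)^{-1}&-(2\bdelta)^{-1}\\\id_n&\id_n}$ so that $\iota(g_1,g_2) = P\,\diag(g_1,g_2)\,P^{-1}$ by \eqref{eq:iota}, I would verify $P^{-1}J_n P = -\sigma$, where $\sigma = \smat{0&\id_n\\\id_n&0}$ is the block-swap matrix. Since $\sigma\,\diag(g_1,g_2)\,\sigma=\diag(g_2,g_1)$ and the two sign changes cancel, $J_n\,\iota(g_1,g_2)\,J_n=\iota(g_2,g_1)$ follows.

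For part (2), I would verify each equality on a set of generators of $\UU(\V)(\F_v)$, namely the Siegel Levi elements $m(A)=\smat{A&\\&A^{-*}}$, the Siegel unipotents $n(B)=\smat{\id_n&B\\&\id_n}$, and the Weyl element $w_n=\smat{&-\id_n\\\id_n&}$. Direct computation gives $J_n m(A) J_n = m(A)$, $J_n n(B) J_n = n(-B)$, and $J_n w_n J_n = -w_n = m(-\id_n)w_n$. Inserting these into the explicit formulae \eqref{eq:Weil} and comparing with the corresponding formulae for $\omega^\square_{-\psi,\chi^{-1}}$, each case becomes termwise verifiable from the following inputs: the relation $\chi^c=\chi^{-1}$ (a consequence of $\chi|_{\A_\F^\times}=\qch_{\K/\F}$ together with the triviality of $\qch_{\K/\F}$ on norms from $\K$), whence $\chi(\det m(A))=\chi^2(\det A)$ using $\det m(A)=\det A/\overline{\det A}$; the identity $\psi_v(-x)=\overline{\psi_v(x)}$; and the Weil-index relation $\gamma_v(-\psi_v)=\gamma_v(\psi_v)^{-1}$ implicit in \eqref{eq:index}. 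Conceptually, the equality with $\omega^\square_{-\psi,\chi^{-1}}$ reflects the general principle that conjugation by an element of $\GUU(\V)$ of similitude $\lambda\in\F^\times$ (here $\lambda=-1$) intertwines $\omega^\square_{\psi,\chi}$ with $\omega^\square_{\psi_\lambda,\chi}$, with the additional replacement $\chi\mapsto\chi^{-1}$ coming from the conjugation acting on the splitting cocycle.

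For part (3), I would deduce the identity formally from (2) together with the unitarity of $\omega^\square$ with respect to the Hermitian pairing $(\cdot,\cdot)$. Writing $(\varphi_1,\varphi_2)^\flat = (\varphi_1,\overline{\varphi_2^\flat})$ with $\varphi_2^\flat(z)=\varphi_2(-\bar z)$, the desired identity reduces to the intertwining relation
\[
\overline{(\omega^\square(g)\varphi)^\flat} \;=\; \omega^\square(J\bar g J)\,\overline{\varphi^\flat},
\]
after which unitarity yields $(\omega^\square(g)\varphi_1,\omega^\square(J\bar g J)\varphi_2)^\flat = (\omega^\square(g)\varphi_1,\omega^\square(g)\overline{\varphi_2^\flat}) = (\varphi_1,\overline{\varphi_2^\flat}) = (\varphi_1,\varphi_2)^\flat$. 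The intertwining itself factors as complex conjugation $\varphi\mapsto\bar\varphi$ (which, by the definition of the contragredient, turns $\omega^\square(g)$ into $\omega^{\square\vee}(g)$) composed with the substitution $z\mapsto-\bar z$, whose combined effect on the group side is precisely conjugation by $J$ by the second equality of (2).

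The main obstacle I anticipate is the Weyl-element case of part (2). Since $J_n w_n J_n = m(-\id_n)w_n$, matching $\omega^\square_{\psi,\chi}(J_n w_n J_n)\varphi(z) = \chi((-1)^n)\gamma_v^{-n}\int\varphi(w)\psi_v(\Tr_{\K_v/\F_v}(wz^*))\,dw$ against $\omega^\square_{-\psi,\chi^{-1}}(w_n)\varphi(z) = \gamma_v(-\psi_v)^{-n}\int\varphi(w)\psi_v(\Tr_{\K_v/\F_v}(wz^*))\,dw$ requires controlling both the $\gamma_v$ factors via $\gamma_v(-\psi_v)=\gamma_v(\psi_v)^{-1}$ and the local relation between $\chi((-1)^n)$ and the square $\gamma_v^{2n}$ implicit in the Kudla splitting. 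Keeping these factors straight, together with the analogous bookkeeping for the middle expression $\chi(\det w_n)\,\omega^{\square\vee}(w_n)$, is the delicate step of the argument.
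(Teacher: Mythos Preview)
Your approach matches the paper's: part (1) by direct matrix computation, parts (2) and (3) by checking on the generators $m(A)$, $n(B)$, $w_n$, with the Weyl element as the delicate case requiring the Weil-index identity $\gamma_v^2=\chi_v(-1)$ (equivalently $\gamma(\Delta,-\psi_v)=\gamma(\Delta,\psi_v)^{-1}=\chi_v(-1)\gamma(\Delta,\psi_v)$, which is how the paper phrases it), and part (3) reduced to an intertwining relation plus unitarity of $\omega^\square$.

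One small caveat on (3): your claim that the intertwining $\overline{(\omega^\square(g)\varphi)^\flat}=\omega^\square(J\bar gJ)\,\overline{\varphi^\flat}$ follows ``from (2)'' is a bit loose. Part (2) together with complex conjugation on Schwartz functions gives $\overline{\omega^\square(g)\varphi}=\chi(\det g)^{-1}\omega^\square(JgJ)\bar\varphi$, but to reach $J\bar gJ$ you still need to track how the substitution $z\mapsto-\bar z$ interacts with the Weil representation, and it is this step that produces $g\mapsto\bar g$. That interaction does not come for free from (2); it requires its own generator check. In practice this is exactly the computation the paper displays explicitly for the Weyl element, so there is no genuine gap in your strategy, only a slight over-attribution to (2).
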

\begin{proof}
    The first property follows directly from \eqref{eq:iota}.
    For the other two properties
    it suffices to verify the equations when 
    $g$ is as in \eqref{eq:Weil}, which is straightforward
    when $g=\smat{A&\\&A^{-*}}$ or $\smat{\id_n&B\\&\id_n}$.
    When $g=\smat{&-\id_n\\\id_n&}$, the second property follows from
    $\gamma(\Delta,-\psi)=\gamma(\Delta,\psi)^{-1}=
    \chi(-1)\gamma(\Delta,\psi)$.
    Write $\varphi=\omega^\square(\smat{&\id_n\\-\id_n&})\varphi_2$
    and observe that 
    \[
        \overline{\varphi}^b(z)=\overline{\varphi}(-\bar{z})=
        \gamma_v^{-n}
        \int_{\K_v^n}\overline{\varphi}_2(w)\psi_v(\Tr_{\K_v/\F_v}(wz^\intercal))\,dw=
        \omega^\square(\smat{&-\id_n\\\id_n&})\overline{\varphi}_2^b(z).
    \]
    Now the third property for $g=\smat{&-\id_n\\\id_n&}$ follows from
    \[
    (\omega^\square(g)\varphi_1,\varphi)^\flat=
    (\omega^\square(g)\varphi_1,\overline{\varphi}^\flat)=
    (\omega^\square(g)\varphi_1,
    \omega^\square(g)\overline{\varphi}_2^\flat)=
    (\varphi_1,\overline{\varphi}_2^\flat)=(\varphi_1,\varphi_2)^\flat.
    \]
\end{proof}

\begin{lem}
    For $\varphi\in \bs(\A_\K^n)$ 
    we have
    $\theta^{\square\vee}_\varphi(\iota(g,\bbeta),h)
    =\theta^\square_{\varphi^\flat}(\iota(\bar{g},\bar{\bbeta}),\bar{h})$.
    Consequently
    \begin{align}
        \overline{\theta^\square_{\varphi}(\eta,\bnu)(g)}&=
        \int_{[T]}\int_{[\UU(V)]}
        \theta^{\square}_{\bar{\varphi}^\flat}(\iota(\bar{g},\bar{\bbeta}),\bar{h})\eta(\bar{h})\,dh
        \bnu(\bar{\bbeta})\,d\bbeta=
        \theta^\square_{\bar{\varphi}^\flat}(\eta,\bnu)(\bar{g}),
        \label{eq:Herm_to_Bilinear1}\\
        \overline{P_{\bmu}(\theta^\square_\varphi(\eta,\bnu)))}&=
        \int_{[T]}
        \theta^\square_{\bar{\varphi}^\flat}(\eta,\bnu)(\bar{\balpha})\bmu(\bar{\balpha})\,d\alpha=
        P_{\bmu}(\theta^\square_{\bar{\varphi}^\flat}(\eta,\bnu))).
        \label{eq:Herm_to_Bilinear2}
    \end{align}
\end{lem}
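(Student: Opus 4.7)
The plan is to prove the first (kernel-level) identity by establishing an intertwining of the Weil representation under the involution $F\colon \varphi(z)\mapsto \varphi(-\bar z)$, and then derive the two integrated consequences by bookkeeping with changes of variables in the defining integrals.

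First, I would verify on the generators of $\UU(\V)(\F_v)$ from \eqref{eq:Weil} the intertwining
\[
    F\,\omega^\square_{\psi,\chi}(g',h')\,F \;=\; \omega^\square_{\psi,\chi^{-1}}(\bar g',\bar h').
\]
On the Levi generator, this follows from the identity $\chi(\bar x) = \chi^{-1}(x)$ for $x\in \K_v^\times$, a consequence of $\chi|_{\A_\F^\times} = \qch_{\K/\F}$ (norms lie in the kernel of $\qch_{\K/\F}$, so $\chi(x\bar x) = 1$). The Heisenberg generator is $\chi$-independent and the identity reduces to $\mtr(\bar B z^*z) = \mtr(B z^\intercal \bar z)$. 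On the Weyl element $J = \smat{&-\id\\\id&}$, after the measure-preserving substitution $w\mapsto -\bar w$ in the Fourier integral (and using $\Tr_{\K_v/\F_v}(\bar u z^*) = \Tr_{\K_v/\F_v}(uz^\intercal)$ since traces land in $\F$), the claim reduces to the Weil-index identity $\overline{\gamma_v} = \gamma_v^{-1}$, which follows from \eqref{eq:index} and the unitarity of $\gamma_v \in \mu_8$. Since $z\mapsto -\bar z$ is a bijection of $\K^n$, summing the intertwining over the rational lattice produces
\[
    \theta^\square_{\varphi^\flat,(\psi,\chi)}(g',h') \;=\; \theta^\square_{\varphi,(\psi,\chi^{-1})}(\bar g',\bar h').
\]

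To match this against $\theta^{\square\vee}_\varphi(\iota(g,\bbeta),h)$, I would specialize to $(g',h') = (\iota(\bar g,\bar\bbeta),\bar h)$ and use the matrix identity $\overline{\iota(\bar g,\bar\bbeta)} = \iota(\bbeta,g) = J_n\iota(g,\bbeta)J_n$, which comes from the formula \eqref{eq:iota} combined with $\bar\bdelta = -\bdelta$ (since $\bdelta$ is purely imaginary diagonal) and Lemma \ref{lem:J}(1). Applying Lemma \ref{lem:J}(2) then converts $\omega^\square_{\psi,\chi^{-1}}$ at $J_n\iota(g,\bbeta)J_n$ back to $\omega^{\square\vee}_{\psi,\chi}$ at $\iota(g,\bbeta)$, with the $\chi$-determinant factors from $\iota$ on the two sides cancelling against each other. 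This yields the stated first identity.

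The two integrated consequences then follow by routine bookkeeping. Since $\eta$, $\bnu$, $\bmu$ are unitary characters of $[\A_\K^1]$ and its powers, and since $\bar h = h^{-1}$ for $h\in\A_\K^1$, we have $\overline{\eta(h)} = \eta(h)^{-1} = \eta(\bar h)$, and similarly for $\bnu$ and $\bmu$. Complex-conjugating the defining integrals of $\theta^\square_\varphi(\eta,\bnu)(g)$ and $P_{\bmu}$, moving the conjugation inside against the unitary characters, and substituting $h\mapsto \bar h$, $\bbeta\mapsto \bar\bbeta$, $\balpha\mapsto \bar\balpha$ (all measure-preserving since complex conjugation is an isometry on the adelic unitary groups) combines with the first identity to give \eqref{eq:Herm_to_Bilinear1} and \eqref{eq:Herm_to_Bilinear2}. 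The hard part of the argument is precisely the Weyl-generator case of the intertwining, where the Fourier transform, the substitution $w\mapsto -\bar w$, and the Weil-index identity must combine exactly to produce the $\chi\leftrightarrow \chi^{-1}$ swap; once this is verified, extension from generators to the full group is automatic by the representation property, and the integrated consequences are mechanical.
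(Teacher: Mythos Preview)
Your approach is correct and takes a more elementary route than the paper's. The paper decomposes $\varphi=\delta_\psi(\phi\otimes\phi^-)$ via the intertwiner \eqref{eq:intertwine}, applies Lemma~\ref{lem:J} to swap the two slots of $\iota$, then invokes the MVW involution for $\UU(\W)$ from \cite[Lem~2.2(iii)]{hks} to pass to the conjugate variables, and closes by checking that $\varphi^\flat=\delta_\psi(\phi^-\otimes\phi)$. Your argument bypasses the see-saw and the external MVW input entirely, verifying the needed intertwining directly on generators of the large group $\UU(\V)$; this is more self-contained, at the cost of a generator-by-generator computation.

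Two points deserve tightening. First, the Weyl-element case does not actually hinge on $\overline{\gamma_v}=\gamma_v^{-1}$: since $\bar J=J$ and the Weyl formula in \eqref{eq:Weil} is independent of $\chi$, after your substitution $w\mapsto -\bar w$ both sides carry the same factor $\gamma_v^{-n}$, so the identity is immediate. Second, and more importantly, your final ``cancellation'' step hides the identity
\[
\omega^\square_{\psi,\chi^{-1}}(g')\;=\;\chi^{-1}(\det g')\,\omega^\square_{\psi,\chi}(g'),\qquad g'\in \UU(\V)(\F_v),
\]
which is what you really need: Lemma~\ref{lem:J}(2) applied with $\chi$ replaced by $\chi^{-1}$ gives $\omega^{\square\vee}_{\psi,\chi^{-1}}$, not $\omega^{\square\vee}_{\psi,\chi}$, and the displayed identity is exactly the missing link, producing the factor $\chi^{-1}(\det\iota(g,\bbeta))\cdot\chi(\det\iota(g,\bbeta))=1$. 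The identity is easy to check on generators (on the Levi it reduces to $\chi(\bar x)=\chi^{-1}(x)$; the Heisenberg and Weyl elements have determinant $1$), but it should be stated rather than absorbed into the word ``cancelling''.
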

\begin{proof}
    Since $\overline{\mu_i(\alpha_i)}=\mu_i(\alpha_i)^{-1}=
    \mu_i(\bar{\alpha}_i)$ for each $i$,
    it suffices to prove the first equation, 
    which can be further reduced to the case
    when $\varphi=\delta_\psi(\phi\otimes\phi^-)$
    for $\phi(x),\phi^-(x)\in \bs(\bX(\A_\F))$.
    Apply the first two properties of the previous lemma, we see
    \[
        \theta^{\square\vee}_\varphi(\iota(g,\bbeta),h)=
        \chi(\det g\bbeta)^{-1}\theta^\square_\varphi(J(g,\bbeta)J,h)=
        \chi(\det\bbeta)^{-1}
        \theta_\phi(\bbeta,h)\theta^\vee_{\phi^-}(g,h)=
        \chi(\det\bar{\bbeta})
        \theta_{\phi^-}(\bar{g},\bar{h})
        \theta^\vee_\phi(\bar{\bbeta},\bar{h})
    \]
    where the last equality follows from \cite[Lem 2.2(iii)]{hks}
    since $g\mapsto \bar{g}=\Ad(g_0)g$ is the MVW-involution
    induced by the $\F$-linear map $g_0\in \End_\F(\W)$
    that acts as the complex conjugation with respect to the basis 
    $\{\ee_1,\cdots,\ee_n\}$.
    Now the equations follow from observing that for $z=x+y$
    \[
        \varphi(-\bar{z})=\delta_\psi(\phi\otimes\phi^-)(-x+y)=
        \int \psi(2\llangle u, y\rrangle_{\bW})\phi(u-x)\phi^-(u+x)\,du=
        \delta_\psi(\phi^-\otimes\phi)(x+y).
    \]
    Here recall that in \eqref{eq:intertwine},
    the decomposition $z=x+y$
    for $z=(z_i)\in \K^n$ 
    has $x$ being the the imaginary part of 
    $\sum_{i=1}^n (2\delta_i)^{-1}z_i\ee_i$,
    thus $-\bar{z}=-x+y$ if $z=x+y$.
\end{proof}

For $\varphi_{1,v},\varphi_{2,v}\in \bs(\K_v^n)$ 
we define the local integrals
\begin{align*}
    Z(\eta,\bnu,\varphi_{1,v}, \varphi_{2,v})&=
    \int_{T\times \UU(V)(\F_v)}
    (\omega^\square(\iota(\id,\bbeta),h)
    \varphi_{1,v},\varphi_{2,v})^\flat\eta_v(h)\bnu_v(\bbeta)\,d\bbeta dh\\
    P(\bmu,\bnu,\varphi_{1,v}, \varphi_{2,v})&=
    \int_{T\times T(\F_v)}
    (\omega^\square(\iota(\balpha,\bbeta),\id)
    \varphi_{1,v},\varphi_{2,v})^\flat\bmu_v(\balpha)\bnu_v(\bbeta)\,d\balpha\bbeta
\end{align*}

The following propositions follow from combining 
\eqref{eq:Herm_to_Bilinear1} with Lemma \ref{lem:HermRallis};
\eqref{eq:Herm_to_Bilinear2} with Lemma \ref{lem:Hermperiod},
and substituting $\varphi_2$ for $\bar{\varphi}_2^\flat$.

\begin{prop}\label{prop:Rallis}
    Under the same assumption in \ref{lem:HermRallis}
    \begin{equation*}
    \frac{2^{n+1}}{\vol[\UU(\W)]}
    \int_{[\UU(\W)]}
    \theta^\square_{\varphi_1}(\eta,\bnu)(g)
    \theta^\square_{\varphi_2}(\eta,\bnu)(\bar{g})\,dg\\=
    \vol[\A_\K^1]^{n+1}\prod_vZ(\eta,\bnu,\varphi_{1,v}, \varphi_{2,v}).
    \end{equation*}
\end{prop}
\begin{prop}\label{prop:period}
    Under the same assumption in \ref{lem:Hermperiod}
    and suppose $\chi^n\mu\nu\eta$ is the trivial character, then
    \begin{equation*}
        P_{\bmu}(\theta^\square_{\varphi_1}(\eta,\bnu))
        P_{\bmu}(\theta^\square_{\varphi_2}(\eta,\bnu))
        =\frac{\vol[\A_\K^1]^{2n+2}}{4^n}\prod_v
        P(\bmu,\bnu,\varphi_{1,v}, \varphi_{2,v}).
    \end{equation*}
\end{prop}

\section{Main computations}

We fix throughout the article a unitary Hecke character $\chi$
of $\A_\K^\times/\K^\times$ 
such that $\chi\mid_{\A_\F^\times}=\qch_{\K/\F}$.
Furthermore, we assume that $\chi$ satisfies the following conditions.
\begin{enumerate}[label={($\chi$\arabic*)}]
    \item \label{cond:chi1}
    The central $L$-value $L(\frac{1}{2},\chi)$ is nonzero.
    \item \label{cond:chi2}
    The conductor $\fc$ of $\chi$ is prime-to-$p$.
    \item \label{cond:chi3}
    The Hecke character 
    $\chi_\circ\coloneqq \chi|\cdot|_\K^{1/2}$ is algebraic and
    has the infinity type $\Sigma^c$
\end{enumerate}

We then choose $\delta\in \K^\times$ such that $\delta=-\delta^c$ and 
satisfies the conditions 
\begin{enumerate}[label={($\delta$\arabic*)}]
    \item\label{delta_cond1}
    $\prod_{w\mid v} \chi_w(\delta)\varepsilon(\frac{1}{2},\chi^{-1}_w,\psi_w)=1$
    for all places $v$ of $\F$.
    \item\label{delta_cond2} 
    $\delta$ belongs to $\oo_w^\times$ for all $w\in S_p^\K$.
\end{enumerate}
Indeed, an element $\delta'$ that satisfies \ref{delta_cond1} always exists
and is unique up to $\Nr(\K^\times)$
since \ref{cond:chi1} implies that the global $\varepsilon$-factor
$\varepsilon(\frac{1}{2},\chi^{-1})=1$.
And since every place above $p$ is split in $\K$,
by strong approximation we may choose a suitable $\alpha\in\K^\times$
such that $\delta\coloneqq\delta'\Nr_{\K/\F}(\alpha)$ 
satisfies \ref{delta_cond2}.
We also note that \ref{delta_cond1} and \ref{cond:chi3} 
together implies that $\text{Im}(\sigma(\delta))>0$
for all $\sigma\in\Sigma$ since 
$\varepsilon(\frac{1}{2},\chi^{-1}_\sigma,\psi_\sigma)=\sqrt{-1}$ and 
$\chi_\sigma(\delta)=-\sigma(\delta)/|\delta\overline{\delta}|^{1/2}$.

Let $\W$ be the 2-dimensional skew-Hermitian space over $\K$
defined by $\bdelta=\delta\id_2$ and $\V=\W+(-\W)$. 
Note that since $\bdelta=\delta\id_2$ we have
$\UU(\W)(R)=\{g\in \GL_{2}(\K\otimes_\F R)\mid gg^*=\id_2\}$.
We shall apply the results from the previous section 
to the Weil representations
$\omega^\square(g,h)$ of $\UU(\V)\times \UU(V)$ and
$\omega(g,h)$ of $\UU(\W)\times \UU(V)$ 
determined by $\psi$ and $\chi$.

To distinguish with the notation in the general cases,
a Schwartz function in $\bs(\A_\K^2)$ will be denoted by 
$\Phi(z), z\in \A_\K^2$.
Following \cite[\S 4]{Ichino05},
the Weil representation $\omega^\square(g,h)$ on $\UU(\V)\times \UU(V)$
can be extended to $G(\UU(\V)\times \UU(V))$ by
\[
\omega^\square(g,h)\Phi=
\omega^\square(g\smat{\id_2&\\&\nu(g)^{-1}\id_2},1)L(h)\Phi
\]
where $L(h)\Phi(z)\coloneqq|\Nr_{\K/\F} h|_{\F}^{-1}\Phi(h^{-1}z)$.
Since $\Nr_{\K_v/\F_v} h=\nu(g)$ 
for $(g,h)\in G(\UU(\V)\times \UU(V))(\F_v)$ we have
\begin{equation}\label{def:Weil_ext}
    \omega^\square(g,h)\Phi(z)=|\nu(g)|_{\F}^{-1}
    \omega^\square(g\smat{\id_2&\\&\nu(g)^{-1}\id_2},1)\Phi(h^{-1}z),\quad
    (g,h)\in G(\UU(\V)\times \UU(V))(\F_v).
\end{equation}
The theta kernel $\theta^\square_\Phi(g,h)$ associated to 
a Schwartz function $\Phi\in \bs(\A_\K^2)$ can then be extended to 
$(g,h)\in G(\UU(\V)\times \UU(V))(\A_\F)$ as well,
which allows us to define theta lifts of Hecke characters.

\begin{defn}\label{def:theta_ext}
We define the theta lift $\theta^\square_\Phi(\eta)$ of 
a Hecke character $\eta$ of $\A_\K^\times/\K^\times$
to $\GUU(\V)(\A_\F)$ as follows. 
First suppose $g\in \GUU(\V)(\A_\F)^+$, then we put
\begin{equation*}
    \theta^\square_\Phi(\eta)(g)
    =|\nu(g)|_\F
    \int_{[\UU(V)]}\theta^\square_P(g,hh_0)\eta(hh_0)\,dh,\,
    \text{ where } h_0\in \A_\K^\times
    \text{ is such that } \Nr(h_0)=\nu(g).
\end{equation*}
This is clearly independent of the choice of $h_0$ and is
an extension of the usual theta lifts of $\eta\vert_{\A_\K^1}$.
Since the function defined above
is left-invariant by $\GUU(\V)(\F)\cap \GUU(\V)(\A_\F)^+$,
it can be extended first 
to a left-$\GUU(\V)(\F)$-invariant function on
\[
\GUU(\V)(\F)\GUU(\V)(\A_\F)^+
=\{g\in \GUU(\V)(\A_{\F})\mid \qch_{\K/\F}(\nu(g))=1\},
\]
which is a connected open subgroup of index $2$ in $\GUU(\V)(\A_{\F})$,
then extended by zero to an automorphic function on $\GUU(\V)(\A_\F)$.
Note that by definition the central character
of $\theta^\square_\Phi(\eta)$ is equal to $\chi^2_\circ\eta$.
\end{defn}
Recall that if $\ff(g)$ is an automorphic function on $\GUU(\V)(\A_\F)$
and $\beta\in \her_2(\F)$ is a Hermitian matrix, 
the $\beta$-th Fourier coefficient of $\ff(g)$ is given by 
\[
    W_\beta(g,\ff)=
    \int_{[\her_2(\A_\F)]}
    \ff(\smat{\id_2&B\\&\id_2}g)\psi(-\mtr(B\beta))\,dB.
\]

\begin{lem}\label{lem:Whittaker}
    Let $\beta\in \her_2(\F)$ be a Hermitian matrix
    and $\theta^\square_\Phi(\eta)$ be the theta lift
    of a character $\eta$ of $\A_\K^\times/\K^\times$.
    Then the $\beta$-th Fourier coefficient
    $W_\beta(g, \theta^\square_\Phi(\eta))$ is nonzero only when
    \[
        \beta=z^*z=
        \smat{\bar{z}_1z_1& \bar{z}_1z_2\\\bar{z}_2z_1 & \bar{z}_2z_2}
        \text{ for some }
        z=(z_1,z_2)\in \K^2.
    \]
    Furthermore, when $\beta=z^*z$ is nonzero and
    $\Phi=\otimes_v\Phi_v$ is a pure tensor of local Schwartz functions,
    for $g\in \GUU(\V)(\A_\F)^+$ we have 
    $W_\beta(g,\theta^\square_\Phi(\eta))=\prod_vW_z(g_v,\eta,\Phi_v)$, 
    where
    \begin{equation*}
        W_z(g_v,\eta,\Phi_v)\coloneqq\int_{\UU(V)(\F_v)}
        \omega^\square
        (g_v\smat{\id_2&\\&\nu(g)_v^{-1}\id_2},1)\Phi_v((hh_0)^{-1}z)\eta_v(hh_0)\,dh
    \end{equation*}
    in which $h_0\in \A_\K^\times$ is any element
    such that $\nu(g)=\Nr(h_0)$.
    Note that while the local integrals does depend on the choice of $z$,
    the global product of which only depends on $\beta=z^*z$.
\end{lem}
\begin{proof}
    Since $\omega^\square(\smat{\id_2&B\\&\id_2})\Phi(z)=
    \psi(\mtr(Bz^*z))\Phi(z)$ by \eqref{eq:Weil} we see
    \[
        W_\beta(g,\theta^\square_\Phi(\eta))=
        |\nu(g)|_\F
        \sum_{z\in \K^2}\int_{[\UU(V)]}\int_{[\her_2(\A_\F)]}
        \omega^\square(g,hh_0)\Phi(z)\eta(hh_0)\,dh\,
        \psi(\mtr(B(z^*z-\beta))\,dB
    \]
    is nonzero only when 
    $\Omega_\beta\coloneqq\{z\in \K^2\mid z^*z=\beta\}$ is nonempty.
    If furthermore $\beta\neq 0$, then $\Omega_\beta$
    is a single orbit under the action of $\UU(V)(\F)=\K^1$, therefore
    \begin{equation*}
        |\nu(g)|_\F\sum_{z\in \Omega_\beta}\int_{[\UU(V)]}
        \omega^\square(g,hh_0)\Phi(z)\eta(hh_0)\,dh\\=
        |\nu(g)|_\F\int_{\UU(V)(\A_\F)}
        \omega^\square(g,h_0)\Phi(h^{-1}z)\eta(hh_0)\,dh
    \end{equation*}
    for any $z\in\Omega_\beta$
    and the lemma follows from applying \eqref{def:Weil_ext}
    to the right hand side of the equation.
\end{proof}

Our goal in the rest of the section is 
to make an explicit choice of local Schwartz functions 
$\Phi_v\in \bs(\K_v^2)$ at each place $v$ and compute 
the local integrals for
\begin{itemize}
    \item the Fourier coefficients of $\theta^\square_\Phi(\eta)$
    in Lemma \ref{lem:Whittaker}.
    \item the inner product of $\theta^\square_\Phi(\eta,\bnu)$
    with respect to $\bnu=\{\nu_1,\nu_2\}, \nu_1=\nu_2=\chi^{-1}$
    in Proposition \ref{prop:Rallis}.
    \item the toric period integral 
    $P_{\bmu}(\theta^\square_\Phi(\eta,\bnu))$
    for $\bnu$ as above and $\bmu=\{\mu_1,\mu_2\}$, where $\mu_1=(\mu\eta)^{-1}$ and
    $\mu_2=\mu$ for some finite order character $\mu$,
    in Proposition \ref{prop:period}.
\end{itemize}
The results are collected respectively in 
Proposition \ref{prop:Fourier_local},
Proposition \ref{prop:Rallis_local},
and Proposition \ref{prop:period_local}.

\begin{rem}
The computation in the proof of
Proposition \ref{prop:pullback} shows that 
the pull-back theta lift $\theta^\square_\Phi(\eta,\bnu)$ 
for $\bnu$ as above involves theta lifts between $\UU(1)\times\UU(1)$
of the trivial character.
Thus by the $\varepsilon$-dichotomy \cite{hks}
it is necessary to assume \ref{cond:chi1} and \ref{delta_cond1}
for the pull-back theta lift to be nontrivial.
\end{rem}

\subsection{The choice of Schwartz functions}\label{sec:choice}

Let $\fs=\bar{\fs}$ be an ideal of $\OK$
which is prime to $p\fc$ and is divisible only by split primes,
which we sometimes identify with an ideal of $\OF$.
For each $v\in\finite$ which divides $\fs$
we fix a choice of a finite place $w$ above $v$.
This amounts to fixing a decomposition
$\fs=\mathfrak{f}\bar{\mathfrak{f}}$ 
such that $\mathfrak{f}+\bar{\mathfrak{f}}=\OK$.
Henceforth, if $v=w\bw$ is a split place and $v\mid p\fs$,
we will assume either $w\in\Sigma_p$ or $w\mid\mathfrak{f}$.

Given an ideal $\fs$ as above, we say an algebraic Hecke character 
$\eta$ of $\A_\K^\times/\K^\times$
is $\fs$-admissible if 
\begin{itemize}
\item the prime-to-$p$ part of the conductor of $\eta$ divides $\fs$,
\item $\eta$ has the infinity type $k\Sigma$
for some integer $k\geq 0$,
\end{itemize}
and admissible if it is $\fs$-admissible for some ideal $\fs$ as above.

We now assign local Bruhat-Schwartz functions 
to an $\fs$-admissible character $\eta$ of the infinity type $k\Sigma$.

\subsubsection{The archimedean case}

When $\sigma\in \Sigma$ let $\C(\sigma)=\C$
denote the $\K\otimes\C$-module on which $\K$ acts through $\sigma$
and define $\X_\sigma=\{Z\in M_2(\C(\sigma))\mid i(Z^*-Z)>0\}$.
Then for $Z_\sigma\in \X_\sigma$ and $P_\sigma\in L_k$,
the subspace of homogeneous polynomials of degree $k$ in $\C[z_1,z_2]$,
we define the Schwart function
\begin{equation*}
    \Phi_{P_\sigma,Z_\sigma}(z)
    =P_\sigma(z)e^{2\pi i(zZ_\sigma z^*)}
    =P_\sigma(z)e^{2\pi i\mtr(Z_\sigma z^*z)}.
\end{equation*}

\subsubsection{The non-split case}

Suppose $v\in\finite$ is non-split and $w$ is the unique place above $v$.
We define 
\[
    \Phi_v(z_1,z_2)=\id_{(\varpi^m_w)}(z_1)\id_{(\varpi^m_w)}(z_2)
\]
for some $m\in \Z$ which will be zero for almost all $v$.
Here $\id_X$ denotes the characteristic function of a set $X$.
The values of $m$ will be determined in Proposition \ref{prop:nonsplit}.

\subsubsection{The split case}

Suppose $v$ splits into $v=w\bw$ in $\K$.
Recall that we have isomorphisms
\begin{align*}
    &\iota_w,\iota_\bw\colon \UU(\V)(\F_v)\to \GL_4(\F_v)&
    &g=(g_w,g_\bw)\quad 
    g_\bw=\smat{&-\id_2\\\id_2}g_w^{-\intercal}\smat{&\id_2\\-\id_2}\\
    &\iota_w,\iota_\bw\colon \UU(\W)(\F_v)\to \GL_2(\F_v)&
    &g=(g_w,g_\bw)\quad 
    g_\bw=g_w^{-\intercal}\notag
\end{align*}
for $g_w=\iota_w(g)$ and $g_\bw=\iota_\bw(g)$.
Similarly, for $z=(z_i,z_2)\in \K_v^2$ we write $z_i=(a_i,b_i)\in \F_v^2$
with $a_i=\iota_w(z_i)$ and $b_i=\iota_\bw(z_i)$.
Recall that when $v\mid p\fs$ we assume 
$w\mid \mathfrak{f}$ or $w\in\Sigma_p$.
We then define $\Phi_v(z)=\Phi_v(z_1,z_2)=\Phi_v(a_1,b_1,a_2,b_2)$ by
$\Phi_v(z)=\phi_{1,w}(a_1)\phi_{1,\bw}(b_1)
\phi_{2,w}(a_2)\phi_{2,\bw}(b_2)$
where $\phi_{i,w}, \phi_{i,\bw}$ are chosen as follows. 
\begin{enumerate}[label={($\phi$\arabic*)}]
    \item $\phi_{i,w}=\id_{\oo_v}=\phi_{i,\bw}$ for $i=1,2$ when $v\nmid p\fc\fs$.
    \item $\phi_{i,w}=\chi^{-1}_w\id_{\oo_v^\times}=\phi_{i,\bw}$
    for $i=1,2$ when $v\mid \fc$.
    \item $\phi_{1,w}(a_1)=\tilde{\eta}_w(a_1)\id_{\oo_v^\times}(a_1)$
    and $\phi_{2,w}=\id_{\oo_v}=\phi_{i,\bw}$ for $i=1,2$ when $v\mid p\fs$.
\end{enumerate}
Here recall that $\tilde{\eta}=\eta^{1-c}$ is the base change
and the component of which at $w$ is $\tilde{\eta}_w=\eta_w\eta_\bw^{-1}$.

\begin{lem}\label{lem:invarianceK}
    Suppose $v\mid \fs$ and $n=\val_v(\fs\dd_\F)$,
    then $\omega^\square(g,1)\Phi_v=\Phi_v$ for
    \[
        g \in\UU(\V)(\F_v)\cap \GL_4(\OK\otimes_{\OF}\oo_v)
        \text{ such that }
        g\equiv\smat{1&*&*&*\\&*&*&*\\&&1&\\&&*&*}\bmod\varpi_v^{n}.
    \]
    The same holds for $v\mid p$ as well if we replace 
    $\fs$ by the conductors of $\eta$ at $v$.
\end{lem}
\begin{proof}
    By the Iwahori decomposition,
    it suffices to verify the equality when $g$ is of the following forms.
    \begin{itemize}
    \item 
    $g=\smat{A&\\&A^{-*}}$ for $A\in \GL_2(\OK\otimes\oo_v)$ such that
    $A\equiv \smat{1&*\\&*}\bmod \varpi_v^n$;
    \item
    $g=\smat{\id_2& B\\&\id_2}$ for $B\in \her_2(\oo_v)$; 
    \item 
    $g=\smat{\id_2& \\ B&\id_2}$ for $B\in (\varpi_v^n)\her_2(\oo_v)$.
    \end{itemize}
    The verification is straightforward in the first two cases.
    For the third case, let $w=\smat{&-\id_2\\\id_2&}$
    and observe that $\smat{\id_2& \\ B&\id_2}=
    w^{-1}\smat{\id_2& -B\\ &\id_2}w$.
    Since $\Phi'=\omega^\square(w,1)\Phi$ is supported on
    $z=(a_1,b_1,a_2,b_2)$ such that $b_1\in (\dd_{\F}\fs)_v^{-1}$ and 
    $a_1,a_2,b_2\in (\dd_\F)_v^{-1}$, for which 
    $\mtr(Bz^*z)\subset (\dd_{\F})_v^{-1}$
    and hence $\psi_v(\mtr(Bz^*z))=1$
    when $B\in (\varpi_v^{n})\her_2(\oo_v)$.
    Consequently $\omega^\square(\smat{\id_2& -B\\ &\id_2},1)\Phi'=\Phi'$
    and the lemma follows.
\end{proof}

\subsection{Fourier coefficients}

\begin{lem}\label{lem:cuspidal}
    Let $A\in \GL_2(\A_{\K,f})$.
    The Fourier coefficient 
    $W_\beta(\smat{A&\\&A^{-*}},\theta^\square_\Phi(\eta))$ 
    is zero when $\beta=0$.
\end{lem}
\begin{proof}
    We have seen that $W_\beta(g,\theta^\square_\Phi(\eta))$ 
    is nonzero only when $\beta=z^*z$.
    If $\beta=0$, then $z=(0,0)$ is the unique vector such that
    $\beta=z^*z$.
    Since $\Phi(0)=0$ by our choice of $\Phi_v$ at $v\mid p$ we see that
    \[
        W_\beta(\smat{A&\\&A^{-*}},\theta^\square_\Phi(\eta))
        =\vol([\her_2])\int_{[\UU(V)]} 
        \chi_\circ(\det A)\Phi(0)\eta(h)\,dh=0.
    \]
\end{proof}

On the other hand, if $\beta=z^*z$ is nonzero,
by Lemma \ref{lem:Whittaker} we have
\[
    W_\beta(\smat{A&\\&A^{-*}},\theta^\square_\Phi(\eta))=
    \chi_\circ(\det A) \prod_v \int_{\UU(V)(\F_v)}
    \Phi_v(h^{-1}zA)\eta(h)\,dh.
\]
This suggests us to consider the local integrals
$W_z(\id_2,\eta,\Phi_v)=\int_{\UU(V)(\F_v)}
\Phi_v(h^{-1}z)\eta(h)\,dh$ for any $z\in\K_v^2$.

\begin{prop}\label{prop:Fourier_local}
    For each place $v$ and $z=(z_1,z_2)\in \K_v^2$
    we put $\beta=z^*z\in\her(\F_v)$.
    Furthermore, when $v=w\bw$ and $z=(z_1,z_2)=(a_1,b_1,a_2,b_2)$
    we assume that neither $(a_1,a_2)$ or $(b_1,b_2)$ is the zero vector.
    \begin{enumerate}
        \item At an archimedean place $\sigma\in\Sigma$, we have
        \[
             W_z(\id_2,\eta, \Phi_{P_\sigma, Z_\sigma})
             =\vol(\C^1)\cdot P_\sigma(z)e^{2\pi i\mtr(Z_\sigma\beta)}.
        \]
        \item At a non-split place $v$, 
        there exists a lattice $\Xi_{v}\subset\her(\F_v)$ such that
        \[
            W_z(\id_2,\eta, \Phi_v)=\vol(\K_v^1)\cdot
            \id_{\Xi_{v}}(\beta).
        \]
        \item At $v=w\bw, v\nmid p\fc\fs$,
        suppose $a_i\neq0$, then
        there exists a lattice $\Xi_{v}\subset \her(\oo_v)$
        and a polynomial $R_{z,v}(T)\in \Z[T]$ such that
        \[
            W_z(\id_2,\eta,\Phi_v)=\vol(\oo_v^\times)
            \cdot\tilde{\eta}_w(z_i)
            R_{z,v}(\tilde{\eta}_w^{-1}(\varpi_w))\id_{\Xi_{v}}(\beta).
        \]
        \item At $v=w\bw, v\mid \fc$,
        there exists a lattice $\Xi_{v}\subset \her(\oo_v)$ such that
        \[
            W_z(\id_2,\eta, \Phi_v)=\vol(\oo_v^\times)
            \cdot \tilde{\eta}_w(z_1)
            \chi^{-1}_w(\beta_{11}\beta_{22})\id_{\Xi_{v}}(\beta).
        \]
        \item At $v=w\bw, v\mid p\fs$,
        there exists a lattice $\Xi_{v}\subset \her(\oo_v)$ such that
        \[
            W_z(\id_2,\eta, \Phi_v)=\vol(\oo_v^\times)
            \cdot  \tilde{\eta}_w(z_1)
            \id_{\Xi_{v}}(\beta).
        \]
    \end{enumerate}
    In all the above cases, we put
    $\tilde{\eta}_w(z_1)=\tilde{\eta}_w(a_1)=0$ if $a_{1}=0$.
\end{prop}
\begin{proof}
    At $\sigma\in\Sigma$ the integrand in 
    $W_z(\id_2,\eta,\Phi_{P_\sigma,Z_\sigma})$ is constant
    since $\omega^\square(\id_2,h)\Phi_{P_\sigma,Z_\sigma}= 
    h^{-k}\Phi_{P_\sigma,Z_\sigma}
    =\eta_\sigma(h)^{-1}\Phi_{P_\sigma,Z_\sigma}$
    and the result follows directly.

    At a non-split place $v$ the character $\eta_w$,
    for the unique place $w\mid v$, is trivial on
    $\K_v^1\subset \oo_w^\times$ by assumption.
    Therefore the integrand is again constant and 
    $W_z(\id_2,\eta,\Phi_v)=\vol(\K_v^1)\cdot \id_{\Xi_v}(\beta)$ for
    $\Xi_{v}=(\varpi^{2m}_w)\her(\oo_v)$.
    
    At $v=w\bw$ we write $h\in \K_v^1$ as $h=(x,x^{-1})$
    for $\iota_w(h)=x=\iota_\bw(h)^{-1}\in \F_v^\times$.
    Then $\eta(h)=\eta_w(x)\eta_{\bw}^{-1}(x)=\tilde{\eta}_w(x)$ and 
    in all cases
        \begin{multline}\label{eq:Wsplit}
        W_z(\id_2,\eta,\Phi_v)=
        \int_{\F_v^\times}
        \phi_{1,w}(x^{-1}a_1)\phi_{1,\bw}(xb_1)
        \phi_{2,w}(x^{-1}a_2)\phi_{2,\bw}(xb_2)\tilde{\eta}_w(x)\,d^\times x\\
        =\vol(\oo_v^\times)\cdot \sum_{m\in \Z}
        \phi_{1,w}(\varpi_v^{-m}a_1)\phi_{1,\bw}(\varpi_v^mb_1)
        \phi_{2,w}(\varpi_v^{-m}a_2)\phi_{2,\bw}(\varpi_v^mb_2)
        \tilde{\eta}_w(\varpi_v^m).
    \end{multline}
    By our choice of $\Phi_v$ the summand at $m$ is nonzero only when
    $-\val_v(b_i)\leq m\leq \val_v(a_j)$ for all $i,j=1,2$.
    Therefore \eqref{eq:Wsplit} is actually a finite sum by 
    our assumptions on $z$.
    Moreover notice that the set of such $m$ is nonempty
    only when $\beta\in \her(\oo_v)$.

    When $w\nmid p\fc\fs$ and suppose $a_i=z_{i,w}\neq0$,
    then $\eqref{eq:Wsplit}$ is equal to 
    $\vol(\oo_v^\times)\cdot \tilde{\eta}_w(z_i)
    R_{z,v}(\tilde{\eta}_w^{-1}(\varpi_w))$ for
    \[
        R_{z,v}(T)\coloneqq 
        \sum_m 
        \id_{\oo_v}(\varpi^{-m}a_1)\id_{\oo_v}(\varpi^{m}b_1)
        \id_{\oo_v}(\varpi^{-m}a_2)\id_{\oo_v}(\varpi^{m}b_2) T^{\val_v(a_i)-m}.
    \]
    Then the third claim follows if we take 
    $\Xi_v=\her_2(\oo_v)\subset \her_2(\F_v)$.

    When $w\mid \fc$ the right hand side of \eqref{eq:Wsplit} is equal to
    \[
        \vol(\oo_v^\times)\cdot 
        \chi_w^{-1}(b_1a_1)\chi_w^{-1}(b_2a_2)\tilde{\eta}_w(\varpi_v^m)
        \sum_m 
        \id_{\oo^\times_v}(\varpi_v^{-m}a_1)
        \id_{\oo^\times_v}(\varpi_v^{m}b_1)
        \id_{\oo^\times_v}(\varpi_v^{-m}a_2)
        \id_{\oo^\times_v}(\varpi_v^{m}b_2)
    \]
    and the summand is nonzero when $-\val_v(b_i)=m=\val_v(a_j)$,
    in which case $\tilde{\eta}_w(\varpi_v^m)=\tilde{\eta}_w(a_1)$
    since $\eta$ is assumed to be unramified at $\fc$.
    Such an $m$ exists and is unique when $\beta$ belongs to the lattice
    \[
        \Xi_v\coloneqq\{\beta=(\beta_{ij})\in \her_2(\F_v)
        \mid \beta_{ij}\in \oo_v^\times \text{ for all }i,j \}.
    \]
    The fourth claim now follows from that 
    $a_1b_1=\beta_{11}$ and $a_2b_2=\beta_{22}$.

    When $v\mid p\fs$ the right hand side of \eqref{eq:Wsplit} is
    \[
        \vol(\oo_v^\times)\cdot \tilde{\eta}_w(a_1)
        \sum_m 
        \id_{\oo_v^\times}(\varpi^{-m}a_1)\id_{\oo_v}(\varpi^{m}b_1)
        \id_{\oo_v}(\varpi^{-m}a_2)\id_{\oo_v}(\varpi^{m}b_2).
    \]
    The summand is nonzero when $m$ satisfies
    $-\val_v(b_i)\leq m\leq \val_v(a_j)$ and $m=\val_v(a_1)$.
    Then the last claim follows if we take 
    $\Xi_v=\{\beta\in\her(\oo_v)\mid (\beta_{11},\beta_{12})\neq 0\}$.
\end{proof}

Put $C(\K^1)=\prod_{v\in\finite\, nonsplit}\vol(\K^1_v)
\prod_{v\in\finite\, split}\vol(\oo_v^\times)$,
which is a $p$-unit since $p$ is assumed to be unramfied in $\F$,
and define $\Xi=\prod_{v\in\finite}\Xi_v\subset \her(\A_{\F,f})$.
We summarize the above computations into a global result.
\begin{cor}\label{cor:Fourier}
    Suppose $\beta=z^*z$ for $z\in \K^2$ is nonzero
    and $A\in\GL_2(\A_{\K,f})$.
    Write $z'=zA$ as an element in $\A_{\K,f}^2$.
    Then $z'_v$ satisfies the assumption in the proposition above.
    Let $z''\in \A_{K,f}^{(p)}$ be the product of 
    \begin{itemize}
        \item $z_{i,w}'$ at $v=w\bw$ when $v\nmid p\fc\fs$,
        where $z_{i,w}'\neq 0$.
        \item $z_{1,w}'$ when $v=w\bw$ and $v\mid \fc\fs$.
    \end{itemize}
    Then $W_\beta(\smat{A&\\&A^{-*}},\theta^\square_\Phi(\eta))$
    is equal to 
    \[
    (2\pi)^{\Sigma}C(\K^1)
    \chi_\circ(\det A)\tilde{\eta}(z'')\cdot 
    \prod_{\sigma\in\Sigma}P_\sigma(z)
    \prod_{w\in \Sigma_p}\tilde{\eta}_w(z'_1)
    \cdot \prod_{v\in\finite\text{ split }}R_{z',v}(\tilde{\eta}_w^{-1}(\varpi_w))\id_{\Xi}(A^*\beta A)e^{2\pi i \sum_\sigma (Z_\sigma\beta)}
    \]
    where $R_{z',v}$ is as in the proposition above when 
    $v\nmid p\fc\fs$ and is the constant function
    $\chi_w^{-1}(\beta_{11}'\beta_{22}')$ when $v\mid\fc$,
    where $\beta'\coloneqq z'^{*}z'=A^*\beta A$, 
    and $R_{z',v}\equiv 1$ when $v\mid p\fs$.
\end{cor}

\begin{rem}
    Consider the Weil representation 
    $\omega^\square_1(g,h)$ of $G(\UU(1,1),\UU(V))$ on $\bs(\A_\K)$.
    A simplied version of the above computation shows that 
    the theta lift $\theta^\square_\varphi(\eta)(g)$ on $\GUU(1,1)$
    associated to the Schwartz function
    \[
        \varphi_\sigma(z)=z^{k_\sigma}e^{-2\pi\Nr(z)},\sigma\in \arch,\quad
        \varphi_v(z)=
        \otimes_{w\mid v}\varphi_w(z),\,
        \varphi_w=
        \begin{cases}
            \id_{\oo_w}(z), & w\nmid \fn,\\
            \eta_w(z)\id_{\oo_w^\times}(z), & w\mid \fn,
        \end{cases}\, v\in \finite,
    \]
    has Fourier expansion $\sum_{\fa\subset \OK, (\fa,\fn)=\OK}
    \eta^{-1}(\fa)e^{2\pi i \Nr(\fa)z}$.
    In other word, $\theta^\square_{1,\varphi}(\eta)(g)$
    is the classical automorphic induction 
    attached to the character $\eta^{-1}$ when restricted to $\GL_2(\F)$.
\end{rem}

\subsection{Inner product}\label{sec:Rallis}

Fix $\bnu=\{\nu_1,\nu_2\}, \nu_1=\nu_2=\chi^{-1}$.
We follow Proposition \ref{prop:Rallis} and write
\begin{equation}\label{eq:Rallis_local}
    Z(\eta,\Phi_1,\Phi_2)\coloneqq Z(\eta, \bnu,\Phi_1,\Phi_2)=
    \int_{(\K_v^1)^3}
    (\omega^\square(\iota(\id_2, \smat{\beta_1&\\&\beta_2}), h)\Phi_{v}, \Phi_v)^\flat
    \eta_v(h)\chi_v^{-1}(\beta_1\beta_2)\,d\beta_1d\beta_2dh,
\end{equation}
where we identify $(T\times \UU(V))(\F_v)=(\K_v^1)^3$.
When $\Phi_1=\Phi=\Phi_2$ we simply write
$Z(\eta,\Phi_1,\Phi_2)=Z(\eta,\Phi)$.

\begin{prop}\label{prop:Rallis_local}
    \hfill
    \begin{enumerate}
        \item At an archimedean place $\sigma\in \Sigma$
        let $P_\sigma(z)=z_1^{k}$ and 
        $Z_{\delta,\sigma}=-\sigma(2\delta)^{-1}\id_2\in\X_\sigma$,
        then
        \[
            Z(\eta_\sigma, \Phi_{P_\sigma, Z_{\delta,\sigma}})=
            \vol(\C_1)^3\cdot
            \frac{\textnormal{Im}(-\sigma(\delta))^{k+2}}{(2\pi)^k}\Gamma(k+1) 
        \]
        \item At a non-split place $v$ let $w\mid v$,
        there exists $\xi_v\in \UU(\V)(\F_v)$, which is independent of
        $\eta$ and equals $\id$ for almost all $v$, such that
        \[
            Z(\eta_v, \omega^\square(\xi_v)\Phi_v)=
           \vol(\K_v^1)^3\cdot c_v
        \]
        where $c_v\coloneqq |4\delta|_wq_w^{-2(e_v-1)}$ is a $p$-unit.
        \item At $v=w\bw, w\nmid p\fc\fs$,
        there exists $\xi_v\in \UU(\V)(\F_v)$
        independent of $\eta$ such that
        \[
            Z(\eta_v, \omega^\square(\xi_v)\Phi_v)=
            \tilde{\eta}_w^{-1}(\delta\dd_\F)\cdot\vol(\oo_v^\times)^3\cdot
            c_v\left[\frac{L(\frac{1}{2}, \chi_v)}{L(1,\qch_v)}\right]^2
            \frac{L(1,(\chi^{-2}\tilde{\eta})_v)}{\zeta_v(2)},
        \]
        where $c_v$ is the $p$-unit
        $\chi^{-4}_w(\delta)\chi^4_w(\dd_\F)|\dd_\F|_v^2$
        and $\qch_v$ is the component of $\qch_{\K/\F}$ at $v$.
        \item At $v=w\bw, w\mid \fc$,
        there exists $\xi_v\in \UU(\V)(\F_v)$
        independent of $\eta$ such that
        \[
            Z(\eta_v, \omega^\square(\xi_v)\Phi_v)=
            \tilde{\eta}_w^{-1}(\fc\delta\dd_\F)\cdot 
            \vol(\oo_v^\times)^3 \cdot c_v(1-q_v^{-1})^2
            \left[\frac{L(\frac{1}{2}, \chi_v)}{L(1,\qch_v)}\right]^2
        \]
        where $c_v$ is the p-unit
        $\chi^{-4}_w(\delta)|\dd_\F|_v^2
        \varepsilon(\frac{1}{2}, \chi_w,\psi_w)^4$.
        \item At $v=w\bw\mid p\fs$,
        there exists $\xi_v\in \UU(\V)(\F_v)$
        independent of $\eta$ such that
        \begin{multline*}
         Z(\eta_v,\bnu, 
         \omega^\square(\iota(\tau_v^n,1)\xi_v)\Phi_v,
         \omega^\square(\xi_v)\Phi_v)=
         \tilde{\eta}^{-1}_w(\delta)
         \cdot\vol(\oo_v^\times)^3 \cdot c_v(1-q_v^{-1})
        \left[\frac{L(\frac{1}{2}, \chi_v)}{L(1,\qch)}\right]^2\\
        \cdot q_v^{-n}(\chi_\circ^{-1}\tilde{\eta})_w(\varpi_w^n)\cdot
        \frac{L(1,(\chi^{-2}\tilde{\eta})_w)
        \varepsilon(0,(\chi^2\tilde{\eta}^{-1})_w,\psi_w)}
        {L(0,(\chi^2\tilde{\eta}^{-1})_w) }
        \end{multline*}
        where $c_v$ is the p-unit
        $\chi^{-5}_w(\delta)\chi^2_w(\dd_\F)|\dd_\F|_v^{5/2}
        \varepsilon(\frac{1}{2}, \chi_w,\psi_w)^2$,
        $\tau_v\in \UU(\W)(\F_v)$ is defined by
        $\iota_w(\tau_v)\coloneqq\smat{\varpi_v&\\&1}$,
        and $n\in\Z_{\geq 1}$ is such that 
        $\tilde{\eta}_w\mid_{1+\varpi_w^n\oo_w}=1$.
    \end{enumerate}
\end{prop}

Let $C(\K^1)=\prod_{v\in\finite\, nonsplit}\vol(\K^1_v)
\prod_{v\in\finite\, split}\vol(\oo_v^\times)$
and write $\Phi$ for the product of
\begin{itemize}
\item $\Phi_{P_\sigma,Z_{\delta,\sigma}}$ for $\sigma\in\Sigma$
and $P_\sigma(z)=z^k$,
\item $\omega^{\square}(\xi_v)\Phi_v$ for $v\in\finite$.
\end{itemize}
\begin{cor}\label{cor:Rallis}
    Decompose $\fc=\fc_+\fc_-$,
    where $\fc_+$ is the part of split places and
    $\fc_-$ is the part of non-split places.
    Let $C_1=\delta^{2\Sigma}\prod_{v\in\finite}c_v$ 
    for $c_v$ from the previous proposition and
    $z_\delta\in \A_{\K,f}^\times$ be the product of 
    \begin{itemize}
        \item $(\delta\dd_\F)_w$ at $v=w\bw$ and $w\nmid p\fc\fs$.
        \item $(\fc\delta\dd_\F)_w$ at $v=w\bw$ and $w\mid \fc$.
        \item $(-\delta)_w$ at $v=w\bw$ and $w\mid p\fs$.
    \end{itemize}
    and let $\tau$ is the product of $\tau_v^n$ at $v\mid p\fs$, then
    \begin{multline}\label{eq:fullRallis}
        \int_{[\UU(\W)]}
        \theta^\square_{\Phi}(\eta,\bnu)(g)
        \theta^\square_{\Phi}(\eta,\bnu)(\bar{g}\tau)\,dg=
        2(2\pi)^{3\Sigma}C(\K^1)^3C_1
        \frac{L(1,\epsilon_{\K/\F})}{\zeta^{(\fc)}(2)}
        \frac{\prod_{v\mid c^+}(1-q_v^{-1})^2}
        {\prod_{v\mid c^-}(1+q_v^{-1})}L(\frac{1}{2},\chi)^2\\
        \cdot \textnormal{Im}(-\delta)^{k\Sigma}\tilde{\eta}(z_\delta^{-1})
        \frac{\Gamma((k+1)\Sigma)}{(2\pi )^{k\Sigma}}
    L(1,\chi^{-2}\tilde{\eta})
    \prod_{v\mid p\fs}
    \frac{q_v^{-n}(\chi_\circ^{-1}
    \tilde{\eta})_w(\varpi_w^n)}{(1+q_v^{-1})}\cdot
    \frac{(1-\chi^{-2}\tilde{\eta}(\varpi_\bw)q_\bw^{-1})
    (1-\chi^{2}\tilde{\eta}^{-1}(\varpi_w))}
    {\varepsilon(1,(\chi^{-2}\tilde{\eta})_w,\psi_w)}.
    \end{multline}
    Here 
    $\Gamma((k+1)\Sigma)\coloneqq\prod_{\sigma\in\Sigma}\Gamma(k+1)$.
    And for each $v\mid p\fs$, we have fixed $v=w\bw$
    where $w\in\Sigma_p$ or $w\mid\mathfrak{f}$.
\end{cor}
\begin{proof}
    Using that 
    $\varepsilon(1,(\chi^2\tilde{\eta}^{-1})_w,\psi_w)
    \varepsilon(1,(\chi^{-2}\tilde{\eta})_w,\psi_w)=\tilde{\eta}_w(-1)$
    and $(1-q_v^{-1})(1+q_v^{-1})=1/\zeta_v(2)$ at $v\mid p\fs$,
    and that at all non-split $v\in\finite$
    \[
        1=\left[\frac{L(\frac{1}{2}, \chi_v)}
        {L(1,\qch_v)}\right]^2
        \frac{L(1,(\chi^{-2}\tilde{\eta})_v)}{\zeta_v(2)},
    \]
    the product of all the local terms from
    the proposition above is equal to 
    \begin{multline*}
    (2\pi)^{3\Sigma}C(\K^1)^3C_1
    \left[\frac{L(\frac{1}{2},\chi)}
    {L^{(\fc^-)}(1,\qch_{\K/\F})}\right]^2
    \frac{\prod_{c^+}(1-q_v^{-1})^2}{\zeta^{(\fc)}(2)}\\\cdot
    \textnormal{Im}(-\delta)^{k}\tilde{\eta}(z_\delta^{-1})
    \frac{\Gamma((k+1)\Sigma)}{(2\pi)^{k\Sigma}}
    L(1,\chi^{-2}\tilde{\eta})
    \prod_{p\fs}\frac{ q_v^{-n}(\chi_\circ^{-1}
    \tilde{\eta})_w(\varpi_w^n)}{(1+q_v^{-1})}\cdot
    \frac{(1-\chi^{-2}\tilde{\eta}(\varpi_\bw)q_\bw^{-1})
    (1-\chi^{2}\tilde{\eta}^{-1}(\varpi_w))}
    {\varepsilon(1,(\chi^{-2}\tilde{\eta})_w,\psi_w)}.
    \end{multline*}
    Then the corollary follows from Proposition \ref{prop:Rallis}
    and that $\vol([\A_\K^1])=2L(1,\epsilon_{\K/\F})$.
\end{proof}

\subsubsection{The archimedean case}\label{sec:arch}

At $\sigma\in \Sigma$, the action of
$g=\smat{A&B\\C&D}\in \GUU(\V)(\F_\sigma)^+$
on $Z_\sigma\in \X_\sigma$ given by 
$g_\sigma Z_\sigma=(AZ_\sigma+B)(CZ_\sigma+D)^{-1}$
is transitive and defines the cocyle
\[
J(g,Z_\sigma)=(\kappa(g,Z_\sigma),\mu(g,Z_\sigma))\in
\GL_2(\C)\times \GL_2(\C), \text{ where }
\kappa(g, Z_\sigma)=\bar{C}Z_\sigma+\bar{D}, 
\mu(g, Z_\sigma)=CZ_\sigma+D,
\]
which satisfies the usual relation
$J(g_1g_2,Z_\sigma)=J(g_1,g_2Z_\sigma)J(g_2,Z_\sigma)$.

\begin{defn}\label{def:weight}
    We define the representation $\rho(h_1,h_2)$
    of $\GL_2(\C)\times\GL_2(\C)$ on $L_k$ by
    \[
        \rho(h_1,h_2)P(z)=\det(h_2)^{-1}P(zh_1^{-\intercal}),
        z=(z_1,z_2).
    \]
    Note that $P(z)=z_1^{k}$ is a vector 
    of the minimal weight $-(k,0,1,1)$
    (with respect to the upper-triangular Borel subgroup).
\end{defn}

\begin{lem}\label{lem:arch}
    Given $P_\sigma\in L_\sigma$, $Z_\sigma\in \X_\sigma$,
    and $(g,h)\in G(\UU(\V)\times\UU(V))(\F_\sigma)$, then
    \begin{equation*}
        \omega^\square(g,h)\Phi_{P_\sigma, Z_\sigma}(z)
        =\nu(g)^{k+1}\cdot \rho(J(g,Z_\sigma))P_\sigma(h^{-1}z)
        e^{2\pi i\mtr(gZ_\sigma z^*z)}
    \end{equation*}
    for $\Phi_{P_\sigma, Z_\sigma}(z)=P_\sigma(z)
    e^{2\pi i\mtr(Z_\sigma z^*z)}$.
\end{lem}
\begin{proof}
    Since $J(g,Z_\sigma)$ satisfies the cocycle relation,
    it suffices to verify the equation when $(g,h)=(g,1)$,
    where $g\in \UU(\V)(\F_\sigma)$ is as in \eqref{eq:Weil},
    or, by \eqref{def:Weil_ext}, 
    when $(g,h)=(\smat{a\id_2&\\&\id_2}, h)$,
    where $h\in\C^\times$ and $a=h\bar{h}$.

    Applying the formulae \eqref{eq:Weil},
    the case when $g=\smat{A&\\&A^{-*}}$ follows from that
    \[
        \chi(\det A)|\det A|^{1/2}=\chi_\circ(\det A)=\det A^*,
    \]
    while the case when $g=\smat{\id_2&B\\&\id_2}$ follows 
    from $\psi(z)=e^{2\pi iz}$.
    When $g=\smat{&-\id_2\\\id_2&}$, we use
    $\gamma_\sigma=\gamma(\Delta,\psi_\sigma)^{-1}=\sqrt{-1}$
    (cf \cite[Lemma A.1]{Ichino05})
    and the Gauss integral formula 
    \[
        \int_{\C^2}P_\sigma(w)e^{2\pi i(wZ_\sigma w^*)}
        e^{2\pi i \Tr_{\C/\R}(-wz^*)}\,dw
        =\det(\sqrt{-1}Z_\sigma^{-1})P(zZ_\sigma^{-1})
        e^{2\pi i(z(-Z^{-1}_\sigma) z^*)}.
    \]
    Thus $\omega^\square(g,1)\Phi_{P_\sigma, Z_\sigma}=
    \det(Z_\sigma^{-1})P(zZ_\sigma^{-1})e^{2\pi i(z(-Z^{-1}_\sigma)z^*)}$
    as desired. 

    At last, when $(g,h)=(\smat{a\id_2&\\&\id_2}, h)$ with 
    $\nu(g)=a=h\bar{h}$, by definition \eqref{def:Weil_ext} we have
    \[
        \omega^\square(g,h)\Phi_{P_\sigma, Z_\sigma}(z)=
        a^{-1}\chi_\circ(a)^2
        P_\sigma(h^{-1}za)e^{2\pi i (z aZ_\sigma z^*)}=
        \alpha^{k+1}P_\sigma(h^{-1}z)e^{2\pi i (z aZ_\sigma z^*)}.
    \]
    On the other hand we have $gZ_\sigma=aZ_\sigma$
    and $J(g,Z_\sigma)=(\id_2,\id_2)$.
    Therefore $\nu(g)^{k+1}\cdot \rho(J(g,Z_\sigma))P_\sigma(h^{-1}z)
    =\alpha^{k+1}P_\sigma(h^{-1}z)$ as well.
\end{proof}

\begin{defn}\label{def:point}
    We define $\ii_\sigma=i\id_2$ and 
    $Z_{\delta,\sigma}=-\sigma(2\delta)^{-1}$ in $\X_\sigma$.
    Then for $(g_1,g_2)\in G(\UU(\W)\times\UU(-\W))(\F_\sigma)$,
    formula \eqref{eq:iota} implies that $g=\iota(g_1,g_2)$
    fixes $Z_{\delta,\sigma}$ and satisfies
    \begin{equation}\label{eq:cocycle}
        J(\iota(g_1,g_2),Z_{\delta,\sigma})
        =(\bar{g}_1,g_2)=(g_1^{-\intercal},g_2).
    \end{equation}
\end{defn}

Now, suppose $\beta_1,\beta_2$ and $h\in\K_\sigma^1$,
by Lemma \ref{lem:arch} and \eqref{eq:cocycle} we have
\[
    \omega^\square(\iota(\id_2,\smat{\beta_1&\\&\beta_2}),h)
    \Phi_{P_\sigma,Z_{\delta,\sigma}}=
    \rho(\id_2,\smat{\beta_1&\\&\beta_2})\cdot h^{-k}\cdot
    \omega^\square(g_\sigma)\Phi_{P_\sigma,Z_{\delta,\sigma}}=
    (\beta_1\beta_2)^{-1}h^{-k}\cdot
    \omega^\square(g_\sigma)\Phi_{P_\sigma,Z_{\delta,\sigma}}
\]
Therefore, write $-\sigma(2\delta)^{-1}=a^2i$, the integrand in 
$Z(\eta,\Phi_{P_\sigma,Z_{\delta,\sigma}})$
is the constant
\begin{multline*}
    (\Phi_{P_\sigma,Z_{\delta,\sigma}},
    \Phi_{P_\sigma,Z_{\delta,\sigma}})^\flat=
    (-1)^k
    \int_{\C}(z_1\bar{z}_1)^{k}e^{-4\pi a^2z_1\bar{z}_1}\,dz_1
    \int_{\C}e^{-4\pi a^2z_2\bar{z}_2}\,dz_2
    =\\4^{-1}(-4\pi)^{-k}a^{-4-2k}\Gamma(k+1)=
    \frac{\text{Im}(-\sigma(\delta))^{k+2}}{(2\pi)^k}\Gamma(k+1)
\end{multline*}
when $P_\sigma(z)=z_1^{k}$.
This proves Proposition \ref{prop:Rallis_local} in the archimedean case.


\subsubsection{The non-split case}\label{sec:non-split}

Suppse $v$ is non-split and $w$ is the unique place above $v$.
Since $\eta$ is assumed to be trivial on $\K_v^1$, and
$\Phi_v(z)=\id_{(\varpi^m_w)}(z_1)\id_{(\varpi^m_w)}(z_2)$
is invariant under translation by $h\in \K_v^1\subset \K_v^\times$,
we have
\begin{align}\label{eq:reduction}
    Z(\eta,\Phi_v)&=\vol(\K_v^1)\cdot
    \prod_{i=1}^2\int_{\K_v^1}(\omega^\square_1(\iota(1,\beta_i),1)
    \id_{(\varpi_v^m)},\id_{(\varpi^{m}_v)})^\flat \chi^{-1}(\beta_i)\,d\beta_i
    \notag\\
    &=\vol(\K_v^1)\cdot
    \prod_{i=1}^2\int_{\K_v^1}(\omega^\square_1(\iota(\beta_i,1),1)
    \id_{(\varpi^{m}_v)},\id_{(\varpi^{m}_v)})^\flat \,d\beta_i.
\end{align}
Here $\omega^\square_1$ is the Weil representation of 
$\UU(1,1)\times \UU(V)$.
We evaluate the integral following a similar computation from  \cite{Hsieh14}.

To simplify the notation let $\qch$ denote
the component of $\qch_{\K/\F}$ at $v$ and
write $\E/\kk$ for $\K_w/\F_v$.
Let $\oo_\E$ and $\oo$ be the rings of integers,
$\varpi_E, \varpi$ be the uniformizers,
$q_E=|\varpi_E|_E^{-1}$ and $q=|\varpi|^{-1}$ be the cardinality 
of the residue fields, and $e=e_v$ be the ramification index.
Pick an element
\[
    \btheta=2a\delta+b,\quad a,b\in \kk
\]
such that $\{1, \btheta\}$ is an $\oo$-basis for $\oo_\E$.
In fact we may assume $b=0$ since 
$b=2^{-1}(\btheta+\bar{\btheta})\in \oo$
and $v\nmid 2$ by our assumption.
Note that the relative ideal of different $\dd_{\E/\kk}$ is generated by 
$\btheta-\bar{\btheta}=4a\delta$.


\begin{lem}\label{lem:xiv}
    For $n\in \Z_{\geq0}$ define $a_n=a\varpi^n$ and
    $\iota_n(\beta,1)={\varsigma^{-1}_n}\iota(g,1)\varsigma_n$, where
    \[
    \varsigma_n\coloneqq
    \smat{(2a_n\delta)^{-1}&\\&-(2a_n\delta)}
    \]
    then 
    $(\omega^\square_1(\iota(\beta,1)\varsigma_n)\varphi,
    \omega^\square_1(\varsigma_n)\varphi')^\flat=
    (\omega^\square_1(\iota_n(\beta,1))\varphi,
    \omega^\square_1(-\id_2)\varphi')^\flat$ 
    for $\varphi(z)\in \bs(\K_v)$.
\end{lem}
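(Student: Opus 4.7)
The plan is to reduce the equality to a straightforward matrix computation by exploiting two structural properties of the Weil representation: multiplicativity and the adjoint relation of Lemma \ref{lem:J}(3) for the bilinear pairing $(\cdot,\cdot)^\flat$.

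First, I would use the very definition $\iota_n(\beta,1) = \varsigma'^{-1}_n\iota(\beta,1)\varsigma_n$ to rewrite
\[
    \iota(\beta,1)\varsigma_n = \varsigma'_n\,\iota_n(\beta,1).
\]
Applying the homomorphism property of $\omega^\square$, this gives
\[
    \omega^\square(\iota(\beta,1)\varsigma_n) = \omega^\square(\varsigma'_n)\,\omega^\square(\iota_n(\beta,1)).
\]
Then the left-hand side of the claimed identity becomes
\[
    (\omega^\square(\varsigma'_n)\omega^\square(\iota_n(\beta,1))\varphi,\, \omega^\square(\varsigma_n)\varphi')^\flat.
\]

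Next, I would invoke Lemma \ref{lem:J}(3) which, rewritten in adjoint form, says
$(\omega^\square(g)\psi,\phi)^\flat = (\psi, \omega^\square(J\bar{g}^{-1}J)\phi)^\flat$ (since $J^2=\id_2$ and $(J\bar{g}J)^{-1} = J\bar{g}^{-1}J$). Applying this with $g = \varsigma'_n$ moves $\omega^\square(\varsigma'_n)$ from the first slot to the second, producing
\[
    (\omega^\square(\iota_n(\beta,1))\varphi,\, \omega^\square(J\bar{\varsigma}'^{-1}_n J\, \varsigma_n)\varphi')^\flat.
\]
It therefore remains only to verify the matrix identity $J\bar{\varsigma}'^{-1}_n J\,\varsigma_n = -\id_2$, whence $\omega^\square(J\bar{\varsigma}'^{-1}_n J\,\varsigma_n) = \omega^\square(-\id_2)$ and the lemma follows.

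The final step is a direct $2\times2$ computation: expanding
$\varsigma_n = \smat{(2a_n\delta)^{-1}& b/(2\delta)\\& -2a_n\delta}$ and
$\varsigma'_n = \smat{(2a_n\delta)^{-1}& -b/(2\delta)\\& -2a_n\delta}$,
one computes $\varsigma'^{-1}_n$ (note $\det\varsigma'_n=-1$), applies the conjugation $\delta\mapsto -\delta$ to obtain $\bar{\varsigma}'^{-1}_n$, sandwiches with $J=\smat{1&\\&-1}$, and multiplies by $\varsigma_n$; the cross terms involving $b$ cancel and the diagonal entries both equal $-1$. I do not foresee any real obstacle: the only subtlety is bookkeeping the complex conjugation and the sign from $\det\varsigma'_n = -1$, so once the adjoint of Lemma \ref{lem:J}(3) is set up correctly the rest is mechanical.
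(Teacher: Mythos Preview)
Your argument is correct. The paper takes a slightly different, more hands-on route: after the same factorization $\iota(\beta,1)\varsigma_n=\varsigma'_n\iota_n(\beta,1)$, it computes $(\omega^\square(\varsigma'_n)\varphi,\omega^\square(\varsigma_n)\varphi')^\flat$ directly by plugging in the Weil representation formulas from \eqref{eq:Weil}, making the change of variable $z\mapsto (2a_n\delta)^{-1}z$ in the resulting integral, and recognizing the answer as $\qch(-1)\int\varphi(z)\varphi'(\bar{z})\,dz=(\varphi,\omega^\square(-\id_2)\varphi')^\flat$. You instead package this computation abstractly via the adjoint relation of Lemma~\ref{lem:J}(3), reducing everything to the single matrix identity $J\bar{\varsigma}'^{-1}_nJ\,\varsigma_n=-\id_2$, which you then verify. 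Your approach is a bit cleaner once Lemma~\ref{lem:J} is in hand; the paper's approach is more self-contained. One small point worth making explicit: Lemma~\ref{lem:J} is stated in the paper for the $\UU(2,2)$ Weil representation, while here you are applying it to the $\UU(1,1)$ representation on $\bs(\K_v)$; this is fine since the proof of Lemma~\ref{lem:J} proceeds on the generators in \eqref{eq:Weil}, which are written for arbitrary rank, but you should say so.
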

\begin{proof}
    We only have to apply \eqref{eq:Weil} and verify that
    $(\omega^\square_1(\varsigma_n)\varphi, 
    \omega^\square_1(\varsigma_n)\varphi')^\flat$ is equal to 
    \[
        \int_{z\in \E}
        \qch(-1)\varphi((2a_n\delta)^{-1}z)
        \varphi'(-(2a_n\delta)^{-1}\bar{z})
        |2a_n\delta|_\E^{-1}\,dz=
        \qch(-1)\int_\E \varphi(z)\varphi'(\bar{z})\,dz
        =(\varphi,\omega^\square_1(-\id_2)\varphi')^\flat.
    \]
\end{proof}

For $n\in \Z_{\geq 0}$ define $d_n\in \Z$ so that 
\[
\val_\E(a_n\dd_\E)=
e\val_\kk(a_n\dd_\kk)+\val_\E(\dd_{\E/\kk})=2d_n+c
\]
for $c\in \{0,1\}$.
Since $\dd_{\K/\F}$ is prime to $2$ by our assumption,
when $\E/\kk$ is ramified we have $c=\val_\E(\dd_{\E/\kk})=1$.
\begin{prop}\label{prop:mn}
    When $n\in \Z_{\geq 0}$ and $d_n$ is as above, let
    $\mm_n(\beta)=(\omega^\square_1(\iota_n(\beta,1))\id_{-d_n}, 
    \omega^\square_1(-\id_2)\id_{-d_n})^\flat$ and 
    $\btheta_n=\varpi^n\btheta=2a_n\delta$, where
    $\id_d$ denotes the characteristic function $\id_{(\varpi_\E^d)}$, then
    \begin{equation}
    \mm_n(\beta)=
    |a_n|^{-1}\cdot
    \begin{cases}
        \chi(-(1+y\btheta_n))q^{-c/2}_\E,  & 
        \beta={1+y\btheta_n}/{1+y\bar{\btheta}_n},\, y\in (\varpi),\\
        \chi(-a_n(y+\btheta_n))|y+\btheta_n|_\E^{1/2}\varepsilon(\frac{1}{2},\qch,\psi)
        q_\E^{-c},  & 
        \beta={y+\btheta_n}/{y+\bar{\btheta}_n},\, y\in \oo.
    \end{cases}
\end{equation}
\end{prop}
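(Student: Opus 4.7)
The plan is to evaluate $\mm_n(\beta)$ by decomposing $\iota_n(\beta,1) \in \UU(1,1)(\F_v)$ into the standard generators appearing in \eqref{eq:Weil}, applying the Weil representation action step by step, and then evaluating the bilinear pairing $(\cdot,\cdot)^\flat$ as a Gauss integral. A first observation: since $\chi_v|_{\F_v^\times} = \qch$ and the lattice $(\varpi_\E^{-d_n})$ is stable under $z \mapsto -z$, formula \eqref{eq:Weil} gives $\omega^\square(-\id_2)\id_{-d_n} = \qch(-1)\id_{-d_n}$, so the pairing reduces to $\qch(-1)\int_{(\varpi_\E^{-d_n})} \omega^\square(\iota_n(\beta,1))\id_{-d_n}(z)\,dz$.

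The two parametrizations in the statement correspond to two Bruhat cells for $\iota_n(\beta,1)$. In Case 1, with $\beta = (1+y\btheta_n)/(1+y\bar{\btheta}_n)$ and $y \in (\varpi)$, a direct matrix computation using $\btheta_n - \bar{\btheta}_n = 4a_n\delta$ and $\btheta_n + \bar{\btheta}_n = 2b$ shows that $\iota_n(\beta,1)$ takes the Iwahori form $\smat{A & B \\ C & D}$ with $A = \bar{D}^{-1}$ a unit and $C$ small enough to fix the lattice $(\varpi_\E^{-d_n})$. Decomposing this as $\smat{A & \\ & \bar{A}^{-1}}\smat{1 & A^{-1}B \\ A^{-1}C & 1}$ and applying \eqref{eq:Weil}, the action on $\id_{-d_n}$ is multiplication by the scalar $\chi_v(A)$ (the unipotent factor acting by a $\psi$-twist that is trivial on the lattice). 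Computing $A$ explicitly gives $\chi_v(-(1+y\btheta_n))$, and the volume of $(\varpi_\E^{-d_n})$ together with the $|A|_\E^{1/2}$ normalization yields $q_\E^{-c/2}|a_n|^{-1}$.

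In Case 2, $\beta = (y+\btheta_n)/(y+\bar{\btheta}_n)$ with $y \in \oo$, the matrix $\iota_n(\beta,1)$ lies in the open Bruhat cell and admits a decomposition of the form $\smat{1 & X \\ & 1}\smat{A & \\ & \bar{A}^{-1}}\smat{ & -1 \\ 1 & }\smat{1 & Y \\ & 1}$ with $A$ a unit multiple of $-a_n(y+\btheta_n)$ and $X, Y \in \F_v$ explicit in $y$, $a_n$, $\btheta_n$. Applying $\omega^\square$ right-to-left by \eqref{eq:Weil} produces in sequence: a $\psi_v(Y z\bar{z})$-twist, a Fourier transform with Weil index $\gamma_v^{-1}$, scaling by $\chi_v(A)|A|_\E^{1/2}$, and a $\psi_v(X z\bar{z})$-twist. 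Pairing with $\id_{-d_n}$ via $(\cdot,\cdot)^\flat$ collapses the Fourier transform to a single Gauss integral, whose support condition forces $y \in \oo$ and whose value supplies the factor $|y+\btheta_n|_\E^{1/2}$. Finally, by \eqref{eq:index} and the standard identity relating $\gamma(\Delta,\psi_v)$ to the $\varepsilon$-factor of $\qch$, the inverse Weil index $\gamma_v^{-1}$ combines with the remaining normalization to produce $\varepsilon(\tfrac{1}{2},\qch,\psi)\,q_\E^{-c}|a_n|^{-1}$.

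The principal obstacle in both cases is the bookkeeping. The shift $d_n$ defined through $\val_\kk(a_n\dd_\kk) = 2d_n + c$ is calibrated precisely so that the Gauss integrals collapse cleanly onto the stated support conditions, and one must carefully track the signs contributed by $\qch(-1)$ and $\chi_v(-1)$, the $|\cdot|_\E^{1/2}$ factors from the diagonal Weil action, and the volume of $(\varpi_\E^{-d_n})$ under the self-dual measure. A subsidiary point is to confirm that the two parametrizations together exhaust $\E^1$; this follows from a case analysis according to whether $\val_\E(u) \le \val_\E(\bar{u})$ or the reverse in the representative $\beta = u/\bar{u}$.
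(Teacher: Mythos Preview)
Your strategy matches the paper's: decompose $\iota_n(\beta,1)$ into Weil generators and push through \eqref{eq:Weil}. But a few details in your sketch are misattributed and would not go through as written.

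In Case~1 you propose the factorization $\smat{A&\\&\bar A^{-1}}\smat{1&A^{-1}B\\A^{-1}C&1}$, but the second factor is not one of the generators in \eqref{eq:Weil}; you still need to split it as (lower unipotent)$\times$(upper unipotent) and check each acts trivially. The paper does this explicitly: it records the three-term decomposition
\[
\iota_n(\beta,1)=\smat{1& a_n(\Tr(\btheta_n)+y\Nr(\btheta_n))\\&1}\smat{1+y\btheta_n&\\&(1+y\bar\btheta_n)^{-1}}\smat{1&\\-y/a_n&1}
\]
and then verifies separately that $\omega^\square(\smat{1&\\y/a_n&1})\id_{-d_n}=\id_{-d_n}$ for $y\in(\varpi)$ by conjugating through the Weyl element and using the $d_n$ calibration. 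This is the content you gesture at with ``$\psi$-twist trivial on the lattice'' but do not justify.

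In Case~2 you say the pairing ``collapses to a Gauss integral'' whose value gives $|y+\btheta_n|_\E^{1/2}$ and whose support forces $y\in\oo$. Neither is quite right. The condition $y\in\oo$ is a hypothesis in the parametrization, not a conclusion. And the factor $|y+\btheta_n|_\E^{1/2}$ comes from the $|\det A|_\E^{1/2}$ in the diagonal Weil action, not from any oscillatory integral. The paper's point is precisely that no genuine Gauss integral appears: after writing
\[
\iota_n(\beta,1)=\smat{1&-a_ny\\&1}\smat{y+\btheta_n&\\&(y+\bar\btheta_n)^{-1}}\smat{&a_n\\-1/a_n&}\smat{1&-a_ny\\&1},
\]
the unipotent $\psi$-twists $\psi(a_ny\Nr(z))$ are trivial on the relevant lattices (again by the $d_n$ calibration), the twisted Weyl element sends $\id_{-d_n}$ to $\varepsilon(\tfrac12,\qch,\psi)\chi(a_n)q_\E^{-c/2}\id_{-d_n-c}$, and the final pairing is simply $\vol((\varpi_\E^{-d_n}))=|a_n|^{-1}q_\E^{-c/2}$.

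So the architecture is right, but to make the argument go you need the explicit decompositions and the two lattice-invariance checks that the paper isolates; without them the ``Gauss integral'' you anticipate would be genuinely oscillatory and you would have no clean way to evaluate it.
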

\begin{proof}
    A direct computation shows that
    \begin{equation}\label{eq:iotan}
        \iota_n(\beta,1)=
        \begin{cases}
            \smat{1& a_n(\Tr(\btheta_n)+y\Nr(\btheta_n))\\&1}
            \smat{1+y\btheta_n&\\&(1+y\bar{\btheta}_n)^{-1}}
            \smat{1&\\-y/a_n&1},
             & 
            \beta={1+y\btheta_n}/{1+y\bar{\btheta}_n};\\
            \smat{1& -a_ny\\&1}
            \smat{y+\btheta_n&\\&(y+\bar{\btheta}_n)^{-1}}
            \smat{& a_n\\ -1/a_n&}
            \smat{1& -a_ny\\&1},
            & 
            \beta={y+\btheta_n}/{y+\bar{\btheta}_n}.
        \end{cases}
    \end{equation}
    Since $\gamma_v=\gamma(\Delta,\psi)^{-1}
    =\varepsilon(\frac{1}{2},\qch,\psi)^{-1}\qch(-1)$ by \cite{Kahn1987},
    \begin{multline*}
        \omega^\square(\smat{&a_n\\-1/a_n&})
        \id_{-d_n}(z)=
        \omega^\square(\smat{-a_n&\\&-a_n^{-1}}\smat{&-1\\1&})
        \id_{-d_n}(z)\\=
        \chi(-a_n)|a_n|_\E^{1/2}\cdot \gamma_v^{-1} |\varpi_\E^{-d_n}|_\E|\dd_\E|_\E^{1/2} \cdot 
        \id_{(\varpi_\E^{d_n})\dd_\E^{-1}}(a_nz)=
        \varepsilon(\frac{1}{2},\qch,\psi)\chi(a_n)
        \id_{-d_n-c}(z)q_E^{-c/2}.
    \end{multline*} 
    Since $\val_\E(a_n\dd_\E)=e\val_\kk(a_n\dd_\kk)+\val_\E(\dd_{\E/\kk})$,
    it can be checked directly that
    \[
        \psi(a_n y \Nr(z))\id_{-d_n-c}(z)=\id_{-d_n-c}(z)
        \text{ when }y\in (\varpi),\quad
        \psi(a_n y \Nr(z))\id_{-d_n}(z)=\id_{-d_n}(z)
        \text{ when }y\in \oo.
    \]
    Therefore
    \begin{align}
        &\omega^\square(\smat{1&\\y/a_n &1})\id_{-d_n}=
        \omega^\square(\smat{&-a_n\\1/a_n&}\smat{1&-a_ny\\ &1}
        \smat{&a_n\\-1/a_n&})\id_{-d_n}=\id_{-d_n} 
        &&\text{ when } y\in (\varpi),\label{eq:ns1}\\
        &\omega^\square(\smat{1&a_n y\\&1})\id_{-d_n}=
        \psi(a_n y \Nr(z))\id_{-d_n}(z)=\id_{-d_n}(z) 
        &&\text{ when } y\in \oo.\label{eq:ns2}
    \end{align}
    Now, when $\beta=1+y\btheta_n/1+y\bar{\btheta}_n$ with $y\in(\varpi)$,
    observe that $1+y\btheta_n\in \oo_E^\times$ and 
    $\Tr(\btheta_n)+y\Nr(\theta_n)\in \oo$.
    Therefore \eqref{eq:ns1} together with $\eqref{eq:iotan}$ show that
    $\omega^\square(\iota_n(\beta,1))\id_{-d_n}=
    \chi(1+y\btheta_n)\id_{-d_n}$ and
    \[
    \mm_n(\beta)=\chi(-(1+y\btheta_n))\vol((\varpi_\E^{-d_n})).
    \]
    On the other hand, when
    $\beta=y+\btheta_n/y+\bar{\btheta}_n$ with $y\in \oo$ we have
    \begin{gather*}
        \omega^\square(\iota_n(\beta,1))\id_{-d_n}(z)=\psi(-a_n y \Nr(z))
        \chi(y+\btheta_n)|y+\btheta_n|_\E^{1/2}\cdot 
        \varepsilon(\frac{1}{2},\qch,\psi)\chi(a_n)
        \id_{-d_n-c}(z)q_E^{-c/2},\\
        \begin{split}
            \mm_n(\beta)=\chi(-(y+\btheta_n))|y+\btheta_n|_\E^{1/2}
            \varepsilon(\frac{1}{2},\qch,\psi)\chi(a_n)q_\E^{-c/2}
            \int_\E
            \psi(-a_ny\Nr(z))\id_{-d_n-c}((y+\btheta_n)z)\id_{-d_n}(\bar{z})\,dz\\
            =\chi(-a_n(y+\btheta_n))|y+\btheta_n|_\E^{1/2}
            \varepsilon(\frac{1}{2},\qch,\psi)q_\E^{-c/2}
            \vol((\varpi_\E^{-d_n})).
        \end{split}
    \end{gather*}
    The proposition now follows from
    $\vol((\varpi_E^{-d_n}))=|a_n|_\E^{-1/2}q_\E^{-c/2}=|a_n|^{-1}q_\E^{-c/2}$.
\end{proof}

\begin{lem}\label{lem:measure}
    Let $\phi(\beta)$ be an integrable function on $E^1$, then
    \begin{equation}
        \int_{E^1}\phi(\beta)\,d\beta=
        q^{-n}|\dd_{\E/\kk}|^{1/2}_\E L(1,\qch)
        \int_{\kk}\phi(\frac{1+y\btheta_n}{1+y\bar{\btheta}_n})
        \frac{dy}{|1+y\btheta_n|_\E}.
    \end{equation}
\end{lem}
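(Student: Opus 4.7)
\textbf{Proof plan for Lemma \ref{lem:measure}.} The formula is a change of variables identity relating the Haar measure $d\beta$ on $\E^1$ to its pullback to $\kk$ under the Cayley-type parametrization $y \mapsto \beta(y) = (1+y\btheta_n)/(1+y\bar\btheta_n)$. The strategy is to lift this parametrization to $\E^\times$, compute the pullback of the multiplicative Haar measure in terms of $d^\times x$ and $dy$, and then pass to the quotient $\E^\times/\kk^\times \cong \E^1$.

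Concretely, I first observe that the map $\Phi\colon \kk^\times \times \kk \to \E^\times$, $(x,y) \mapsto x(1+y\btheta_n)$, is a measurable bijection onto the complement of $\kk\btheta_n$ (measure zero), and that $(\pi \circ \Phi)(x,y) = \beta(y)$ where $\pi(z) = z/\bar z$, with kernel $\kk^\times$. Next I pull back the additive measure $dz$ on $\E$: writing $z = u + v\btheta_n$, the linear coordinates are $u = x$, $v = xy$, whose Jacobian determinant is $x$, so $du\,dv = |x|_\kk\,dx\,dy$. Comparing with the self-dual $dz$, since both are Haar measures on $\E$ we have $dz = C_n\,du\,dv$ for a scalar $C_n$ to be determined.

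To pin down $C_n$, I compute the volume of the lattice $\mathcal{L}_n = \oo + \oo\btheta_n$ in both measures. Since $\btheta_n = b(1-\varpi^n) + \varpi^n \btheta$, the change-of-basis matrix from $\{1,\btheta\}$ to $\{1,\btheta_n\}$ has determinant $\varpi^n$, giving $[\oo_\E : \mathcal{L}_n] = q^n$, hence $\vol(\mathcal{L}_n, dz) = q^{-n}|\dd_\E|_\E^{1/2}$. In the $(u,v)$-coordinates $\mathcal{L}_n$ corresponds to $\oo \times \oo$, with $\vol = |\dd_\kk|_\kk$. Using $|\dd_\E|_\E^{1/2} = |\dd_{\E/\kk}|_\E^{1/2}|\dd_\kk|_\kk$ (which follows from $\dd_\E = \dd_{\E/\kk}\dd_\kk\oo_\E$ together with $|\dd_\kk\oo_\E|_\E = |\dd_\kk|_\kk^2$), this yields $C_n = q^{-n}|\dd_{\E/\kk}|_\E^{1/2}$.

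Now I convert to multiplicative measures: $|z|_\E = |x|_\kk^{2}|1+y\btheta_n|_\E$ and $d^\times z = (1-q_\E^{-1})^{-1}dz/|z|_\E$, combined with $dx/|x|_\kk = (1-q^{-1})d^\times x$, give
\[
d^\times z \;=\; (1-q_\E^{-1})^{-1}(1-q^{-1})\,C_n\; d^\times x\;\frac{dy}{|1+y\btheta_n|_\E}.
\]
Since $d\beta$ is by definition the quotient of $d^\times z$ by $d^\times x$ under $\pi$, this identifies the required density. The proof concludes by checking $(1-q_\E^{-1})^{-1}(1-q^{-1}) = L(1,\qch)$ in both cases: when $\E/\kk$ is inert, $q_\E = q^2$ and the factor becomes $(1+q^{-1})^{-1} = L(1,\qch)$; when ramified, $q_\E = q$ and both sides equal $1$. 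Combining everything yields the stated formula.

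The main technical obstacle is the bookkeeping for $C_n$: the self-dual measure $dz$ on $\E$ with respect to $\psi_\E = \psi \circ \Tr$ is not the same as the product of self-dual measures on $\kk$, and the discrepancy must be correctly tracked through both the ramified and unramified cases. Everything else is routine change of variables and the standard quotient-measure formula built into the paper's conventions.
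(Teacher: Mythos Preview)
Your proof is correct and follows essentially the same route as the paper's: parametrize $\E^\times$ by $z=x+y\btheta_n$ (the paper) or equivalently $z=x(1+y\btheta_n)$ (yours), compute the Jacobian relating $dz$ to $dx\,dy$, convert to multiplicative measures, and pass to the quotient $\E^\times/\kk^\times\cong\E^1$. The only presentational difference is that the paper identifies the constant directly as $|4a_n\delta|_\E^{1/2}=q^{-n}|\dd_{\E/\kk}|_\E^{1/2}$ (using that $\btheta_n-\bar\btheta_n=4a_n\delta$), whereas you compute it via the lattice index $[\oo_\E:\oo+\oo\btheta_n]=q^n$; these are of course equivalent.
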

\begin{proof}
    Recall that the measure on $\E^1$ is defined so that
    $\int_{E^1}\phi(\beta)\,d\beta=\int_{E^\times}\tilde{\phi}(z)\,d^\times z$
    for any function $\tilde{\phi}(z)$ on $\E^\times$ such that
    $\phi(\frac{x+y\btheta_n}{x+y\bar{\btheta}_n})=
    \int_{\kk^\times}\tilde{\phi}(t(x+y\btheta_n))\,d^\times t$.
    Pick such a function and note that when $z=x+y\btheta_n$
    the self-dual measure $dz$ is given by
    $|4a_n\delta|^{1/2}_\E dxdy=q^{-n}|\dd_{\E/\kk}|^{1/2}_Edxdy$,
    where $dx, dy$ are self-dual. Therefore
    \begin{multline*}
        \int_{E^1}\phi(\beta)\,d\beta=
        \int_{\E^\times}\tilde{\phi}(z)\,d^\times z
        =\frac{q^{-n}|\dd_{\E/\kk}|_E^{1/2}}{1-q_\E^{-1}}
        \int_{\kk^{\times2}}\tilde{\phi}(x+y\btheta_n)
        \frac{dx\,dy}{|x+y\btheta_n|_\E}
        \overset{y\mapsto xy}{=}\\
        \frac{q^{-n}|\dd_{\E/\kk}|_E^{1/2}}{1-q_\E^{-1}}
        \int_{\kk^{\times2}}\tilde{\phi}(x(1+y\btheta_n))\frac{dx}{|x|}
        \frac{dy}{|1+y\btheta_n|_\E}=
        q^{-n}|\dd_{\E/\kk}|_E^{1/2}L(1,\qch)
        \int_{\kk^\times}\phi(\frac{1+y\btheta_n}{1+y\bar{\btheta}_n})
        \frac{dy}{|1+y\btheta_n|_\E}.
    \end{multline*}
\end{proof}

\begin{prop}\label{prop:mn_int}
    Define
    $c(\chi)\coloneqq\inf\{n\in \Z_{\geq 1}\mid \chi=1 \text{ on }
    1+\varpi^n\oo_\E\}$.
    When $n\geq c(\chi)-1$, we have
    \begin{equation}
        \int_{E^1}\mm_n(\beta)\,d\beta=
        \qch(-1)|4\delta|_\E^{1/2}\cdot q_E^{-c}\vol(\K_v^1)
    \end{equation}
\end{prop}
\begin{proof}
Combine the previous lemma with Proposition \ref{prop:mn},
\begin{align}
    \int_{E^1}\mm_n(\beta)\,d\beta=
    q^{-n}|\dd_{\E/\kk}|^{1/2}_\E &L(1,\qch)
    \left(
    \int_{(\varpi)}\mm_n(\frac{1+y\btheta_n}{1+y\bar{\btheta}_n})\,dy+
    \int_{\oo}\mm_n(\frac{y+\btheta_n}{y+\bar{\btheta}_n})
    \frac{dy}{|y+\btheta_n|_\E}
    \right)\notag\\=
    |\dd_{\E/\kk}|^{1/2}_\E L(1,\qch)\qch(-1)|a|^{-1}
    &\left(
    q_\E^{-c/2}\int_{(\varpi)}\chi(1+y\btheta_n)\,dy+
    \varepsilon(\frac{1}{2},\qch,\psi)
    \chi(a_n)q_\E^{-c}\int_{\oo}\chi(y+\btheta_n)
    \frac{dy}{|y+\btheta_n|^{1/2}_\E}
    \right)\notag\\
    =|\dd_{\E/\kk}|^{1/2}_\E L(1,\qch)\qch(-1)|a|^{-1}
    &\notag\\\cdot 
    \Bigg(
    q^nq_\E^{-c/2}&\int_{(\varpi)^{n+1}}\chi(1+y\btheta)\,dy
    +\varepsilon(\frac{1}{2},\qch,\psi)
    \chi(a)q_\E^{-c}\int_{(\varpi^{-n})}\chi(y+\btheta)
    \frac{dy}{|y+\btheta|^{1/2}_\E}\Bigg)\label{eq:m8}.
\end{align}
By the definition of $c(\chi)$, when $n\geq c(\chi)-1$
the first integral in \eqref{eq:m8} is equal to 
\[
    q^nq_\E^{-c/2}\int_{(\varpi)^{n+1}}\chi(1+y\btheta)\,dy
    =q^{-1}q_\E^{-c/2}|\dd_\kk|^{1/2}.
\]
On the other hand, by \cite[Proposition 8.2.]{hks}
\[
    \int_\kk\chi(y+\btheta)|y+\btheta|_\E^{-s-\frac{1}{2}}\,dy
    =\chi(4a\delta)|4a\delta|_\E^{-s}
    \frac{\varepsilon(s+\frac{1}{2},\chi^{-1}, \psi_\E)}
    {\varepsilon(2s,\qch,\psi)}
    \frac{L(2s,\qch)}{L(1-2s,\qch)}.
\]
Therefore the second integral in \eqref{eq:m8} is equal to 
the value at $s=0$ of
\[
    \chi(a\delta)|4a\delta|_\E^{-s}
    \frac{\varepsilon(s+\frac{1}{2},\chi^{-1}, \psi_\E)}
    {\varepsilon(2s,\qch,\psi)}
    \frac{L(2s,\qch)}{L(1-2s,\qch)}-
    \int_{\kk\setminus \varpi^{-n}\oo}
    \chi(y+\btheta)
    |y+\btheta|_\E^{-s-\frac{1}{2}}\,dy.
\]
But $\chi(y+\btheta)=\chi(y)=\qch(y)$ 
and $|y+\btheta|_\E=|y|_\E=|y|^2$
when $\val_\kk(y)\leq -c(\chi)$.
Therefore
\[
    \int_{\kk\setminus \varpi^{-n}\oo}
    \chi(y+\btheta)
    |y+\btheta|_\E^{-s-\frac{1}{2}}\,dy=
    \begin{cases}
        -(-1)^{n}q^{-2s(1+n)}L(2s,\qch)(1-q^{-1})|\dd_\kk|^{1/2},
        & \E/\kk \text{ unramified};\\
        0,
        & \E/\kk \text{ ramified}.
    \end{cases}
\] 
When $\E/\kk$ is ramified observe that $q_\E=q, c=1$ and 
$\varepsilon(\frac{1}{2},\qch,\psi)=
|\dd_{E/\kk}|_E^{1/2}|\dd_\kk|^{1/2}\varepsilon(0,\qch,\psi)$.
Thus \eqref{eq:m8} is
\[
    q^{-3/2}|\dd_\kk|^{1/2}
    +
    q^{-1}\chi(\delta)\varepsilon(\frac{1}{2},\chi^{-1}, \psi_\E)
    |\dd_{\E/\kk}|_E^{1/2}|\dd_\kk|^{1/2}
    =2q^{-3/2}|\dd_\kk|^{1/2}
    =2q^{-1}|\dd_{\E/\kk}|_E^{1/2}|\dd_\kk|^{1/2}
\]
by the assumption \ref{delta_cond1}.
And when $\E/\kk$ is unramified observe that
$q_\E=q^2$,
$\varepsilon(\frac{1}{2},\qch,\psi)=|\dd_\kk|^{1/2}
\varepsilon(0,\qch,\psi)=\qch(\dd_\kk)$, and
$\qch(a_n\dd_\kk)=(-1)^{2d_n+c}=(-1)^c$.
Thus by \ref{delta_cond1} again \eqref{eq:m8} is
\begin{equation*}
    q^{-1-c}|\dd_\kk|^{1/2}+
    q^{-2c}|\dd_\kk|^{1/2}
    \left[\frac{1+q^{-1}}{2}+\qch(a_n\dd_\kk)
    \frac{1-q^{-1}}{2}\right]
    =(1+q^{-1})|\dd_\kk|^{1/2}q^{-2c}
\end{equation*}
The result now follows from 
$|\dd_{\E/\kk}|_\E^{1/2}=|4\delta a|_\E^{1/2}=|4\delta|_\E^{1/2}|a|$
and $\vol(\K_v^1)=e|\dd_{\E/\kk}|_E^{1/2}|\dd_\kk|^{1/2}$.
\end{proof}

We now set $\Phi_v(z)=\id_{-d}(z_1)\id_{-d}(z_2)$
for $d$ given as follows.
\begin{itemize}
\item When $\K_w/\F_v$ is unramified, 
pick $n=c(\chi)-1$ or $c(\chi)$ 
so that $2d\coloneqq \val_\E(a_n\dd_\E)$ is even. 
\item When $\K_w/\F_v$ is ramified.,
pick $n=c(\chi)-1$ and set $d$ such that $\val_\E(a_n\dd_\E)=2d+1$.
\end{itemize}
Note that since $n=0$ for almost all $n$ and $\dd_{\E/\kk}=(4a\delta)$,
indeed we have $d=\val_\E(a\dd_\E)=0$ or almost all $n$.
The following proposition follows from Lemma \ref{lem:xiv},
Proposition \ref{prop:mn_int}, and the reduction \eqref{eq:reduction}.
\begin{prop}\label{prop:nonsplit}
    Set $\xi_v=\diag(\varsigma_n,\varsigma_n)\in \UU(\V)(\F_v)$.
    We have 
    \begin{equation}\label{eq:reduction_twist}
    Z(\eta,\omega^\square(\xi_v)\Phi_v)=
    \vol(\K_v^1)\cdot 
    \left[
        \int_{\K_v^1}\mm_n(\beta)\,d\beta
    \right]^2
    =\vol(\K_v^1)^3\cdot |4\delta|_w\cdot q_w^{-2(e_v-1)}
    \end{equation}
\end{prop}

\begin{prop}\label{prop:non_split_inv}
    For almost all $v\in\finite$ that is non-split we have
    $\omega^\square(\iota(\bbeta,1))\Phi_v=\Phi_v$ for all $\bbeta\in T(\F_v)$.
\end{prop}
\begin{proof}
    The computation in Proposition \ref{prop:mn} shows that
    $\omega^\square_1(\iota_n(\beta_i,1))\id_{-d_n}(z_i)=\id_{-d_n}(z_i)$
    for almost all $v$.
    The result then follows since $n=0$, and hence $\xi_v=\id$,
    for almost all $v$.
\end{proof}

\subsubsection{The split case}

Suppose $v$ splits into $v=w\bw$ in $\K$.
Since $\delta=-\delta^c$, we have
$\delta_v\coloneqq\iota_w(\delta)=-\iota_\bw(\delta)$.
We then identify the image of $\delta$ in $\K_v$ 
with $(\delta_v,-\delta_v)$ and 
define $\xi_v=(\xi_w,\xi_\bw)\in \UU(\V)(\F_v)$ by
\begin{equation}\label{def:xi}
    \xi_w=
    \smat{(2\delta_v)^{-1}\id_2& -(2\delta_v)^{-1}\id_2\\\id_2 & \id_2},\quad
    \xi_\bw=
    \smat{2^{-1}\id_2& -2^{-1}\id_2\\\delta_v\id_2 & \delta_v\id_2}.
\end{equation}
\begin{prop}\label{prop:xi_split}
    For $g_1,g_2\in \UU(\W)(\F_v)$ and
    $\Phi,\Phi'\in \bs(\K_v^2)$
    \[
        (\omega^\square(\iota(g_1,g_2)\xi_v)\Phi,
        \omega^\square(\xi)\Phi')^\flat=
        (\omega^\square(\smat{(g_{1,w}, g_{2,\bw})&\\&(g_{2,w}, g_{1,\bw})})\Phi,
        \omega^\square((\delta_v^{-1},\delta_v)\smat{&-\id_2\\\id_2&})\Phi')^\flat.   
    \]
\end{prop}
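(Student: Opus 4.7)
The plan is to reduce the identity to two matrix calculations, with Lemma \ref{lem:J}(3) providing the mechanism for moving $\omega^\square(\xi_v)$ across the bilinear pairing.

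First I would establish the matrix identity $\iota(g_1, g_2)\,\xi_v = \xi_v \cdot m$, where $m \in \UU(\V)(\F_v)$ denotes the element whose $w$-component is $\diag(g_{1,w}, g_{2,w})$ and whose $\bw$-component is $\diag(g_{2,\bw}, g_{1,\bw})$. At the $w$-component this is immediate from \eqref{eq:iota} because $\xi_w$ coincides with the conjugating block matrix appearing in the definition of $\iota$ (using $\bdelta_w = \delta_v\,\id_2$). At the $\bw$-component, $\bdelta_\bw = -\delta_v\,\id_2$ makes the conjugating matrix in $\iota$ differ from $\xi_\bw$, but a short $4\times 4$ computation shows that the discrepancy is the block-swap $\smat{0 & \id_2 \\ \id_2 & 0}$ up to a scalar, and this swap is exactly what exchanges $g_1$ with $g_2$ on the $\bw$-side. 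Consistency with the unitary constraint relating $m_\bw$ to $m_w$, combined with $g_{i,\bw} = g_{i,w}^{-\intercal}$ for $g_i \in \UU(\W)(\F_v)$, provides an immediate sanity check.

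Given this identity, the left-hand side equals $(\omega^\square(\xi_v)\omega^\square(m)\Phi,\,\omega^\square(\xi_v)\Phi')^\flat$. Applying Lemma \ref{lem:J}(3) with $g = \xi_v$ and $\varphi_2 = \omega^\square((J_2\bar{\xi}_v J_2)^{-1}\xi_v)\Phi'$ rewrites this as
\[
    (\omega^\square(m)\Phi,\ \omega^\square(J_2\bar{\xi}_v^{-1}J_2\,\xi_v)\Phi')^\flat,
\]
where $(J_2\bar{\xi}_v J_2)^{-1} = J_2\bar{\xi}_v^{-1}J_2$ since $J_2^2 = \id$.

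The last step is to compute $J_2\bar{\xi}_v^{-1}J_2 \cdot \xi_v$ component-wise and match it with $(\delta_v^{-1}, \delta_v)\smat{& -\id_2 \\ \id_2 &}$. Since complex conjugation swaps split components, $(\bar{\xi}_v)_w = \xi_\bw$ and $(\bar{\xi}_v)_\bw = \xi_w$; direct $4\times 4$ computations then yield $J_2\xi_\bw^{-1}J_2\,\xi_w = \delta_v^{-1}w_2$ at the $w$-component and $J_2\xi_w^{-1}J_2\,\xi_\bw = \delta_v w_2$ at the $\bw$-component, which together give the element on the right-hand side. The main obstacle is purely bookkeeping: tracking the sign $\bdelta_\bw = -\bdelta_w$, the effect of conjugation on split-place components, and the resulting swap of $g_1$ and $g_2$ in the $\bw$-blocks that propagates into the matrix $m$.
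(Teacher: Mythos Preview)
Your proposal is correct and follows essentially the same route as the paper's proof: both arguments first establish the conjugation identity $\xi_v^{-1}\iota(g_1,g_2)\xi_v=\smat{(g_{1,w},g_{2,\bw})&\\&(g_{2,w},g_{1,\bw})}$, then invoke Lemma~\ref{lem:J}(3) to transfer $\omega^\square(\xi_v)$ across the pairing, and finally compute $(J\bar{\xi}_vJ)^{-1}\xi_v=(\delta_v^{-1},\delta_v)w_2$ componentwise. Your explanation of the $\bw$-side block-swap (via $C_\bw^{-1}\xi_\bw$ being a scalar multiple of $\smat{0&\id_2\\\id_2&0}$) is a bit more explicit than the paper's, but the logic is identical.
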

\begin{proof}
    Write $\xi=\xi_v$.
    We have $\iota_{\xi}(g_1,g_2)\coloneqq \xi^{-1}\iota(g_1,g_2)\xi=
    \smat{(g_{1,w}, g_{2,\bw})&\\&(g_{2,w}, g_{1,\bw})}$
    since by definition \eqref{eq:iota}
    \begin{equation*}
    \iota(g_1,g_2)_w=
    \smat{(2\delta_v)^{-1}\id_2&-(2\delta_v)^{-1}\id_2\\\id_2&\id_2}
    \smat{g_{1,w}&\\&g_{2,w}}
    \smat{(2\delta_v)^{-1}\id_2&-(2\delta_v)^{-1}\id_2\\\id_2&\id_2}^{-1}
    \end{equation*}
    Then by Lemma \ref{lem:J}
    $(\omega^\square(\iota(g_1,g_2)\xi)\Phi,
    \omega^\square(\xi)\Phi')^\flat=
    (\omega^\square(\iota_{\xi}(g_1,g_2))\Phi,
    \omega^\square((J\bar{\xi}J)^{-1}\xi)\Phi')^\flat$
    and the proposition follows from
    $(J\bar{\xi}J)^{-1}\xi=(\delta_v^{-1},\delta_v)\smat{&-\id_2\\\id_2&}$.
    Here $(\delta_v^{-1},\delta_v)\in\K_v^1$ is such that 
    $\iota_w(\delta_v^{-1},\delta_v)=\delta_v^{-1}$.
\end{proof}

With the help of the proposition, we compute the action of 
Hecke operators on $\omega^\square(\xi_v)\Phi_v$ when $v\nmid \fc$.
Identify $\UU(\W)(\F_v)$ with $\GL_2(\F_v)$  via the isomorphism $\iota_w$.
We consider the double coset operators given by
\begin{align}
    T_w&=\GL_2(\oo_v)\smat{\varpi_v&\\&1}\GL_2(\oo_v)=
    \bigcup_{b\in \oo_v/(\varpi_v)}\smat{\varpi_v&b\\&1}\GL_2(\oo_v)
    \cup  \smat{1&\\&\varpi_v}\GL_2(\oo_v)\\
    U_w&=I_v^n\smat{\varpi_v&\\&1}I_v^n=
    \bigcup_{b\in \oo_v/(\varpi_v)}\smat{\varpi_v&b\\&1}I_v^n
\end{align}
which act respectively on the subspaces of Schwartz functions
that are invariant by $\GL_2(\oo_v)$ or 
$I_v^n\coloneqq
\{g\in\GL_2(\oo_v)\mid g\equiv\smat{1&*\\&*}\bmod\varpi_v^n\}$.

\begin{prop}\label{prop:inv_Up}
\hfill
\begin{enumerate}
    \item When $v\nmid p\fc\fs$,
    $\omega^\square(\xi_v)\Phi_v$ is invariant by 
    $\omega^\square(\iota(g,\id))$ for $g\in \GL_2(\oo_v)$ and 
    \begin{multline*}
    T_w\omega^\square(\xi_v)\Phi_v\coloneqq
    \sum_{b\in \oo_v/(\varpi_v)}\omega^\square(\iota
    (\smat{\varpi_v&b\\&1},\id)\xi_v)\Phi_v+
    \omega^\square(\iota(\smat{1&\\&\varpi_v},\id)\xi_v)\Phi_v\\=
    \chi_\circ^{-1}(\varpi_w)\cdot
    \omega^\square(\iota(\smat{\varpi_v&\\&\varpi_v},\id_2)\xi_v)\Phi_v+
    q_w\chi_\circ(\varpi_w)\Phi_v.
    \end{multline*}
    \item 
    When $v\mid p\fs$,
    $\omega^\square(\xi_v)\Phi_v$ is invariant by 
    $\omega^\square(\iota(g,\id))$ for $g\in I_v^n$,
    where $\tilde{\eta}_w\vert_{1+\varpi_v^n\oo_v}\equiv 1$, and 
    \begin{equation*}
    U_w\omega^\square(\xi_v)\Phi_v\coloneqq
    \sum_{b\in \oo_v/(\varpi_v)}\omega^\square(\iota
    (\smat{\varpi_v&b\\&1},\id_2)\xi_v)\Phi_v=
    \chi_\circ^{-1}(\varpi_w)\cdot
    \omega^\square(\iota(\smat{\varpi_v&\\&\varpi_v},\id_2)\xi_v)\Phi_v.
    \end{equation*}
\end{enumerate}
\end{prop}
\begin{proof}
    Since $\iota_{\xi}(g,1)$ is of the form $\smat{A&\\&A}$
    it is easy to verify the invariance.
    Now observe that 
    \begin{align*}
        \iota_{\xi}(\smat{\varpi_v&b\\&1},\id_2)=\smat{A&\\&A^{-*}}
        \text{ for } A=(\smat{\varpi_w&b\\&1},\id_2)
        \in \GL_2(\K_v)=\GL_2(\F_v\times\F_v)\\
        \iota_{\xi}(\smat{1&\\&\varpi_v},\id_2)=\smat{A&\\&A^{-*}}
        \text{ for } A=(\smat{1&\\&\varpi_v},\id_2)
        \in \GL_2(\K_v)=\GL_2(\F_v\times\F_v).
    \end{align*}
    Thus when $v\nmid p\fc\fs$, we have that 
    $\omega^\square(\iota_\xi(\smat{1&\\&\varpi_v},1))\Phi_v(z)=
    \chi_\circ(\varpi_w)
    \id_{\oo_v}(a_1)
    \id_{\oo_v}(b_1)
    \id_{\oo_v}(\varpi_va_2)
    \id_{\oo_v}(b_2)$ and
    \begin{multline*}
        \sum_{b\in \oo_v/(\varpi_v)}\omega^\square(\iota_{\xi}
        (\smat{\varpi_v&b\\&1},\id_2),1)\Phi_v=
        \chi_\circ(\varpi_w)\sum_{b\in \oo_v/(\varpi_v)}
        \id_{\oo_v}(\varpi_v a_1)
        \id_{\oo_v}(b_1)
        \id_{\oo_v}(ba_1+a_2)
        \id_{\oo_v}(b_2)
        \\=\chi_\circ(\varpi_w)
        \id_{\oo_v^\times}(\varpi_v a_1)
        \id_{\oo_v}(b_1)
        \id_{\oo_v}(\varpi_va_2)
        \id_{\oo_v}(b_2)+
        q_v\chi_\circ(\varpi_w)
        \id_{\oo_v}(a_1)
        \id_{\oo_v}(b_1)
        \id_{\oo_v}(a_2)
        \id_{\oo_v}(b_2)
    \end{multline*}
    Similarly, when $v\mid p\fs$ we have that 
    \begin{multline*}
        \sum_{b\in \oo_v/(\varpi_v)}\omega^\square(\iota_{\xi}
        (\smat{\varpi_v&b\\&1},\id_2),1)\Phi_v=
        \chi_\circ(\varpi_w)\sum_{b\in \oo_v/(\varpi_v)}
        \id_{\oo_v^\times}(\varpi_v a_1)
        \id_{\oo_v}(b_1)
        \id_{\oo_v}(ba_1+a_2)
        \id_{\oo_v}(b_2)
        \\=\chi_\circ(\varpi_w)
        \id_{\oo_v^\times}(\varpi_v a_1)
        \id_{\oo_v}(b_1)
        \id_{\oo_v}(\varpi_va_2)
        \id_{\oo_v}(b_2).
    \end{multline*}
    Now the proposition follows from
    $\omega^\square(\iota_{\xi}(\smat{\varpi_v&\\&\varpi_v},\id),1)
    \Phi_v(a_1,b_1,a_2,b_2)=\Phi_v(\varpi_va_1,b_1,\varpi_va_2,b_2)$.
\end{proof}

Back to the computation of \eqref{eq:Rallis_local}.
Observe that 
for 
$\Phi_v(z)=
\phi_{1,w}(a_1)\phi_{1,\bw}(b_1)
\phi_{2,w}(a_2)\phi_{2,\bw}(b_2)$
\[
    \omega^\square((\delta_v^{-1},\delta_v)\smat{&-\id_2\\\id_2&})\Phi_v(z)=
    \chi^{-4}_w(\delta_v)
    \hat{\phi}_{1,w}(-b_1\delta_v)\hat{\phi}_{1,\bw}(-a_1\delta_v^{-1})
    \hat{\phi}_{2,w}(-b_2\delta_v)\hat{\phi}_{2,\bw}(-a_2\delta_v^{-1})
\]
where $\hat{\phi}(x)=\int_{\F_v}\phi(y)\psi(xy)\,dy$ denotes 
the Fourier transform of a function $\phi\in\bs(\F_v)$.

Write $h$=$(x,x^{-1})$ and $\beta_i=(y_i,y_i^{-1})$ for $i=1,2$
again with $x=\iota_w(h)=\iota_\bw(h)^{-1}$
and $y_i=\iota_w(\beta_i)=\iota_\bw(\beta_i)^{-1}$
and apply Proposition \ref{prop:xi_split} gives
\begin{align}
    Z(\eta_v, &\omega^\square(\xi_v)\Phi_v)=
    \chi_w^{-4}(\delta_{v})\notag\\\cdot
    &\int_{\F_v^{\times}}\int_{\F_v^2}
    \phi_{1w}(x^{-1}a_1)\hat{\phi}_{1w}(a_1\delta_v)
    \phi_{2w}(x^{-1}a_2)\hat{\phi}_{2w}(a_2\delta_v)
    \tilde{\eta}_w(x)\,d^\times x\,da_1da_2\notag\\\cdot 
    &\int_{\F_v^{\times}}\int_{\F_v}
    \phi_{1\bw}(xb_1y_1^{-1})\hat{\phi}_{1\bw}(b_1\delta_v^{-1})
    \chi^{-1}_w(y_1)|y_1|_v^{-1/2}\,d^\times y_1\,db_1 \notag\\\cdot
    &\int_{\F_v^{\times}}\int_{\F_v}
    \phi_{2\bw}(xb_2y_2^{-1})\hat{\phi}_{2\bw}(b_2\delta_{v}^{-1})
    \chi^{-1}_w(y_2)|y_2|_v^{-1/2}\,d^\times y_2\,db_2 \notag\\
    &\overset{y_i\mapsto xy_i,\, a_i\mapsto \delta_v^{-1}a_i,\,
    b_i\mapsto \delta_v b_i}{=}
    \chi_w^{-4}(\delta_{v})\notag\\\cdot
    &\int_{\F_v^{\times}}\int_{\F_v^2}
    \phi_{1w}(x^{-1}a_1\delta_{v}^{-1})\hat{\phi}_{1w}(a_1)
    \phi_{2w}(x^{-1}a_2\delta_{v}^{-1})\hat{\phi}_{2w}(a_2)
    (\chi^{-2}\tilde{\eta})_w(x)|x|_v^{-1}\,d^\times x\,da_1da_2\label{eq:m3}\\\cdot 
    &\int_{\F_v^{\times}}\int_{\F_v}
    \phi_{1\bw}(b_1\delta_{v}y_1^{-1})\hat{\phi}_{1\bw}(b_1)
    \chi^{-1}_w(y_1)|y_1|_v^{-1/2}\,d^\times y_1\,db_1 \label{eq:m4}\\\cdot
    &\int_{\F_v^{\times}}\int_{\F_v}
    \phi_{2\bw}(b_2\delta_{v}y_2^{-1})\hat{\phi}_{2\bw}(b_2)
    \chi^{-1}_w(y_2)|y_2|_v^{-1/2}\,d^\times y_2\,db_2 \label{eq:m5}
\end{align}
Making the change of variable 
$y_i\mapsto b_i\delta_v y_i$, lines \eqref{eq:m4} and \eqref{eq:m5} become
\begin{multline*}
    \chi^{-1}_w(\delta_v)|\delta_v|^{-1/2}(1-q_v^{-1})\cdot 
    \int_{\F_v^\times}
    \phi_{i,\bw}(y_i^{-1})\chi^{-1}_w(y_i)|y_i|^{-1/2}\,d^\times y_i
    \int_{\F_v^\times}
    \hat{\phi}_{i,\bw}(b_i)\chi^{-1}_w(b_i)|b_i|^{1/2}\,d^\times b_i\\=
    \chi^{-1}_w(\delta_v)|\delta_v|^{-1/2}(1-q_v^{-1})\cdot 
    Z(\frac{1}{2}, \phi_{i,\bw}, \chi_w)
    Z(\frac{1}{2}, \hat{\phi}_{i,\bw}, \chi^{-1}_w)
\end{multline*}
where $Z(s, \phi_{i,\bw}, \chi_w)$ is the Tate integral.
Since  $Z(\frac{1}{2}, \phi_{i,\bw}, \chi_w)=
\vol(\oo_v^\times)L(\frac{1}{2}, \chi_w)$
for our choice of $\phi_{i,\bw}$,
we can conclude from the local functional equation that 
\begin{equation*}
    Z(\frac{1}{2}, \phi_{i,\bw}, \chi_w)
    Z(\frac{1}{2}, \hat{\phi}_{i,\bw}, \chi^{-1}_w)=
    \vol(\oo_v^\times)^2\cdot 
    \varepsilon(\frac{1}{2}, \chi_w,\psi_w)L(\frac{1}{2}, \chi_v).
\end{equation*}
For line \eqref{eq:m3}
recall that $\phi_{2,w}$ is equal to either 
$\id_{\oo_v}$ or $\chi_w^{-1}\id_{\oo_v^\times}$,
therefore
$\hat{\phi}_{2,w}(a_2)$ is equal to either $|\dd_v|_v^{1/2}\id_{\dd_v^{-1}}$,
where $\dd_v$ denotes the component of $\dd_\F$ at $v$,
or
$\chi_w(a_2)\varepsilon(1,\chi_w,\psi_w)
\id_{(\dd_v\fc)^{-1}}^\times(a_2)$,
where $\id^\times_\fa=\id_{\fa}-\id_{\varpi_v\fa}$
for $\fa\subset \F_v$. Then
\begin{align}
    \int_{\F_v}\phi_{2,w}(x^{-1}a_2\delta_v^{-1})\hat{\phi}_{2,w}(a_2)\,da_2=
    \begin{cases}
        1, & x\notin (\delta_v\dd_v)^{-1}\\
        |x\delta_v\dd_v|_v & x\in (\delta_v\dd_v)^{-1}
    \end{cases}\label{eq:split_case1}\quad
    \text{ when } \phi_{2,w}&=\id_{\oo_v}\\
    \int_{\F_v}\phi_{2,w}(x^{-1}a_2\delta_v^{-1})\hat{\phi}_{2,w}(a_2)\,da_2=
    |\dd_v|_v^{1/2}(1-q_v^{-1})\cdot |x\delta_v|\hat{\phi}_{2,w}(x\delta_v)\quad
    \text{ when } \phi_{2,w}&=\chi_w^{-1}\id_{\oo_v^\times},\quad
    \label{eq:split_case2}
\end{align}

Apply \eqref{eq:split_case1} to 
$\phi_{1,w}=\phi_{2,w}=\id_{\oo_v}$ when $w\nmid p\fc\fs$.
We conclude that 
line \eqref{eq:m3} is equal to 
\begin{multline*}
    \int_{x\notin(\delta_v\dd_v)^{-1}}
    (\chi^{-2}\tilde{\eta})_w(x)|x|_v^{-1}\,d^\times x+
    |\delta_v\dd_v|_v^2\cdot \int_{x\in(\delta_v\dd_v)^{-1}}
    (\chi^{-2}\tilde{\eta})_w(x)|x|_v\,d^\times x\\=
    \vol(\oo_v^\times)\cdot 
    (\chi^{-2}\tilde{\eta})_w(\delta_v\dd_v)^{-1}|\delta_v\dd_v|_v
    \frac{L(1,(\chi^{-2}\tilde{\eta})_v)}{L(2,\qch_v)}
\end{multline*}

Apply \eqref{eq:split_case2} to 
$\phi_{1,w}=\phi_{2,w}=\chi^{-1}_w\id_{\oo_v^\times}$
when $w\mid \fc$.
We conclude that 
line \eqref{eq:m3} is equal to 
\begin{multline*}
    |\dd_v|_v(1-q_v^{-1})^2 |\delta_v|^2
    \chi^2_w(\delta_v)\varepsilon(1,\chi_w,\psi_w)^2
    \int_{\val_v(x\delta_v)=-\val_v(\dd_v\fc)}
    \tilde{\eta}_w(x)|x|_v\,d^\times x\\
    =\vol(\oo_v^\times) (1-q_v^{-1})^2
    \tilde{\eta}_w^{-1}(\fc\delta_v\dd_v)
    \cdot \chi^{2}_w(\delta_v)|\delta_v\dd_v|_v\cdot 
    \varepsilon(\frac{1}{2},\chi_w,\psi_w)^2.
\end{multline*}


When $v\mid p\fs$, 
let $\tau_v\in \UU(\W)(\F_v)$ 
be such that $\tau_w\coloneqq\iota_w(\iota_v)=\smat{\varpi_v&\\&1}$, then
\[
    \omega^\square
    (\iota_{\xi}(\tau_v^n, \id_2))\Phi_{v}(z)=
    \chi_w(\varpi_w^n)|\varpi_v|_v^{n/2}
    \phi_{1,w}(a_1\varpi_v^n)\phi_{1,\bw}(b_1)
    \phi_{2,w}(a_2)\phi_{2,\bw}(b_2).
\]
Therefore 
$Z(\eta_v,\bnu,\omega^\square(\iota(\tau_v^n,1)\xi_v)\Phi_v,
\omega^\square(\xi_v)\Phi_v)$ 
is equal to the product of $\chi_w^{-4}(\delta_v)$, \eqref{eq:m4}, \eqref{eq:m5}, and 
\begin{equation}\label{eq:m3'}
    \chi_w(\varpi_w^n)|\varpi_v|_v^{n/2}
    \int_{\F_v^{\times}}\int_{\F_v^2}
    \phi_{1w}(x^{-1}a_1\varpi_v^n\delta_{v}^{-1})\hat{\phi}_{1w}(a_1)
    \phi_{2w}(x^{-1}a_2\delta_{v}^{-1})\hat{\phi}_{2w}(a_2)
    (\chi^{-2}\tilde{\eta})_w(x)|x|_v^{-1}\,d^\times x\,da_1da_2.
\end{equation}
Choose $n\in \Z_{\geq 1}$ so that $\tilde{\eta}_w$ 
is trivial on $1+\varpi_w^n$. Suppose $\tilde{\eta}_w$ is ramified, then
\[
    \int_{\F_v}\phi_{1,w}(x^{-1}a_1\varpi_v^n\delta_v^{-1})
    \hat{\phi}_{1,w}(a_1)\,da_1=
    |\dd_v|_v^{1/2}(1-q_v^{-1})\cdot |x\delta_v\varpi_v^{-n}|_v
    \hat{\phi}_{1,w}(x\delta_v\varpi_v^{-n})
\]
is supported on $(\delta_v\dd_v)^{-1}$.
Combine this with \eqref{eq:split_case1}, we see that
\eqref{eq:m3'} is equal to
\begin{multline*}
    \chi_w(\varpi_w^n)|\varpi_v|_v^{n/2}\cdot 
    |\dd_v|_v^{1/2}(1-q_v^{-1})|\delta_v\varpi_v^{-n}|_v\cdot 
    \int_{\F_v^\times}\hat{\phi}_{1,w}(x\delta_v\varpi_v^{-n})
    (\chi^{-2}\tilde{\eta})_w(x)|x\delta_v\dd_v|_v\,d^\times x
    \\\overset{x\mapsto \varpi_v^n\delta_v^{-1}x}{=}
    (\chi^{-1}|\cdot|_v^{1/2}\tilde{\eta})_w(\varpi^n_w)\cdot 
    |\dd_v|_v^{3/2}(1-q_v^{-1})
    \cdot (\chi\tilde{\eta}^{-1})_w(\delta_v)|\delta_v|_v
    \int_{\F_v^\times}\hat{\phi}_{1,w}(x)
    (\chi^{-2}\tilde{\eta})_w(x)|x|_v\,d^\times x
\end{multline*}
Apply the local functional equation again, from 
$\int_{\F_v^\times}\phi_{1,w}(x)
(\chi\tilde{\eta}^{-1})_w(x)\,d^\times x=\vol(\oo_v^\times)$ we have
\begin{equation}\label{eq:splitlast}
\int_{\F_v^\times}\hat{\phi}_{1,w}(x)
    (\chi^{-2}\tilde{\eta})_w(x)|x|_v\,d^\times x=
    \frac{L(1,(\chi^{-2}\tilde{\eta})_w)
    \varepsilon(0,(\chi^2\tilde{\eta}^{-1})_w,\psi_w)}
    {L(0,(\chi^2\tilde{\eta}^{-1})_w) }
    \vol(\oo_v^\times)
\end{equation}

Suppose $\tilde{\eta}_w$ is unramified,
then $\phi_{1,w}=\id_{\oo_v^\times}$, and $ \hat{\phi}_{1,w}=
|\dd_v|_v^{1/2}(-q_v^{-1}\id_{(\dd_v^{-1}\varpi_v^{-1})}+
)\id_{\dd_v^{-1}})$. And
\[
    \int_{\F_v}\phi_{1,w}(x^{-1}a_1\varpi_v^n\delta_v^{-1})\hat{\phi}_{1,w}(a_1)\,da_1=
    |\dd_v|^{1/2}(1-q_v^{-1})\cdot |x\delta_v\varpi_v^{-n}|_v
    \hat{\phi}_{1,w}(x\delta_v\varpi_v^{-n})
\]
is supported on $x\delta_v\in \dd_v^{-1}\varpi_v^{n-1}\subset \dd_v^{-1}$
since $n\geq 1$. 
Therefore the computation is exactly the same with 
the case when $\tilde{\eta}$ is ramified
and \eqref{eq:splitlast} also holds.
We can now combine all the above computations and obtain the formulae
listed in Proposition \ref{prop:Rallis_local}.

\subsection{Toric period}\label{sec:period}

Identify $(T\times T)(\F_v)=(\K_v^1)^4$ and 
apply Proposition \ref{prop:period} to the case
\[
\bnu=\{\nu_1,\nu_2\}, \nu_1=\nu_2=\chi^{-1} \text{ and }
\bmu=\{\mu_1,\mu_2\}, \mu_1=(\eta\mu)^{-1}, \mu_2=\mu.
\]
Since each local Schwartz function $\Phi_v(z)$
has the form $\varphi_{1,v}(z_1)\varphi_{2,v}(z_2)$
for $\varphi_{i,v}\in \bs(\K_v)$, we put
$P(\eta,\mu, \Phi_v)\coloneqq
P(\bmu,\bnu, \Phi_v, \Phi_v)=P_1(\eta\mu,\varphi_1)P_2(\mu^{-1},\varphi_2)$ for
\begin{align*}
    P(\eta\mu,\varphi_{1,v})&\coloneqq
    \int_{(\K_v^1)^2}
    (\omega_1^\square(\iota(\alpha_1, 
    \beta_1), 1)\varphi_{1,v}, \varphi_{1,v})^\flat
    (\eta\mu)^{-1}(\alpha_1)\chi^{-1}(\beta_1)
    \,d\alpha_1d\beta_1\\
    &=
    \int_{(\K_v^1)^2}
    (\omega_1^\square(\iota(1, 
    \beta_1), h_1)\varphi_{1,v}, \varphi_{1,v})^\flat
    \chi^{-1}(\beta_1)\eta\mu(h_1)
    \,dh_1d\beta_1,\\
    P(\mu^{-1},\varphi_{2,v})&\coloneqq
    \int_{(\K_v^1)^2}
    (\omega_1^\square(\iota(\alpha_2, 
    \beta_2), 2)\varphi_{2,v}, \varphi_{2,v})^\flat
    \mu(\alpha_2)\chi^{-1}(\beta_2)
    \,d\alpha_2d\beta_2\\
    &=
    \int_{(\K_v^1)^2}
    (\omega_1^\square(\iota(1, 
    \beta_2), h_2)\varphi_{2,v}, \varphi_{2,v})^\flat
    \chi^{-1}(\beta_2)\mu^{-1}(h_2)
    \,dh_2d\beta_2.\\
\end{align*}

To simplify the notation, we assume $\mu$ is a finite order
Hecke character which is ramified only at a prime $\fl$ of $\K$
such that $\fl\neq\bar{\fl}$ and 
$\fl$ is prime to $p\fc\fs$.

\begin{prop}\label{prop:period_local}
    Let notations be as in Proposition \ref{prop:Rallis_local}.
    \begin{enumerate}
        \item At an archimedean place $\sigma\in\Sigma$ we have
        \[
            P(\eta,\mu, \omega^\square(g_\sigma)\Phi_{P_\sigma,\ii_\sigma})=
            \vol(\C_1)^4\cdot
            \frac{\textnormal{Im}(-\delta)^{k+2}}{(2\pi)^k}\Gamma(k+1).
        \]
        \item At a non-split place $v$ let $w\mid v$. Then
        \[
            P(\eta, \mu, \omega^\square(\xi_v)\Phi_v)=
           \vol(\K_v^1)^4\cdot c'_v
        \]
        where $c'_v=|4\delta|_wq_w^{-2(e_v-1)}$ is a $p$-unit
        \item At $v=w\bw$ with $v\nmid p\fs\fl\bar{\fl}$,
        \[
            P(\eta, \mu, \omega^\square(\xi_v)\Phi_v)=
            \tilde{\eta}_w^{-1}(\fc\delta\dd_\F)\cdot 
            \vol(\oo_v^\times)^4\cdot c'_v
            \left[
            \frac{L(\frac{1}{2}, \chi_v)}{L(1,\qch)}
            \right]^2
            \frac{L(\frac{1}{2}, (\chi\tilde{\mu})_v)}{L(1,\qch)}
            \frac{L(\frac{1}{2}, (\chi(\tilde{\eta}\tilde{\mu})^{-1})_v)}{L(1,\qch)}.
        \]
        where $c'_v\coloneqq  \chi^{-4}_w(\delta)|\dd_\F|_v^2
        \varepsilon(\frac{1}{2}, \chi_w,\psi_w)^2
        \varepsilon(\frac{1}{2}, (\chi\tilde{\mu})_w,\psi_w)
        \varepsilon(\frac{1}{2}, (\chi\tilde{\mu}^{-1})_w,\psi_w)$ is a $p$-unit.
        \item At $v=w\bw$ with $w=\fl$ replace $\phi_{1,w}=\phi_{2,w}=\id_{\oo_v}$
        by $\id_{\oo_v^\times}$, then 
        \[
            P(\eta, \mu, \omega^\square(\xi_v)\Phi_v)=
            \tilde{\eta}_w^{-1}(\fc\fl^m\delta\dd_\F)\cdot 
            \vol(\oo_v^\times)^4\cdot c'_v
            \left[
            \frac{L(\frac{1}{2}, \chi_v)}{L(1,\qch)}
            \right]^2
            \frac{L(\frac{1}{2}, (\chi\tilde{\mu})_v)}{L(1,\qch)}
            \frac{L(\frac{1}{2}, (\chi(\tilde{\eta}\tilde{\mu})^{-1})_v)}{L(1,\qch)}.
        \]
        where $c'_v\coloneqq  \chi^{-4}_w(\delta)|\dd_\F|_v^2
        \varepsilon(\frac{1}{2}, \chi_w,\psi_w)^2
        \varepsilon(\frac{1}{2}, (\chi\tilde{\mu})_w,\psi_w)
        \varepsilon(\frac{1}{2}, (\chi\tilde{\mu}^{-1})_w,\psi_w)$ is a $p$-unit and $\fl^m$ is the conductor of $\tilde{\mu}_\fl$.
        \item At $v=w\bw$ with $v\mid p\fs$, then
        \[
            P(\eta, \omega^\square(\xi_v)\Phi_v)=
            \tilde{\eta}_w^{-1}(\delta)\cdot 
            \vol(\oo_v^\times)^4\cdot c'_v
            \left[
            \frac{L(\frac{1}{2}, \chi_v)}{L(1,\qch)}
            \right]^2
            \frac{L(\frac{1}{2}, (\chi\tilde{\mu})_v)}{L(1,\qch)}
            \frac
            {\varepsilon(\frac{1}{2}, \chi(\tilde{\eta}\tilde{\mu})^{-1})_w,\psi_w)}
            {L(1,\qch)}
            \frac {L(1/2,(\chi^{-1}\tilde{\eta})_w)}
            {L(1/2, (\chi\tilde{\eta}^{-1})_w)}.
        \]
        where $c'_v\coloneqq  \chi^{-4}_w(\delta)|\dd_\F|_v^2
        \varepsilon(\frac{1}{2}, \chi_w,\psi_w)^2
        \varepsilon(\frac{1}{2}, (\chi\tilde{\mu})_w,\psi_w)$ is a $p$-unit.
    \end{enumerate}
\end{prop}
\begin{proof}
    At $\sigma\in \Sigma$
    we have seen that 
    $\omega^\square(\iota(1,\smat{\beta_1&\\&\beta_2}),
    \smat{h_1&\\&h_2})\Phi_{P_\sigma,Z_{\delta,\sigma}}=
    \chi(\beta_1\beta_2)\eta^{-1}(h_1)
    \omega^\square(g_\sigma)\Phi_{P_\sigma,Z_{\delta,\sigma}}$, so
    \[
        P(\eta,\Phi_{P_\sigma,Z_{\delta,\sigma}})=\vol(\C_1)^4\cdot 
        (\Phi_{P_\sigma, Z_{\delta,\sigma}},
        \Phi_{P_\sigma, Z_{\delta,\sigma}})^\flat
        =\vol(\C_1)^4\cdot
        \frac{-\delta^{k+2}}{(-2\pi i)^k}\Gamma(k+1)
    \]
    At a non-split place $v$, since $\eta\mu(h_1)=1=\mu^{-1}(h_2)$
    we can compare $P(\eta, \omega^\square(\xi_v)\Phi_v)$ above
    with \eqref{eq:reduction} and see
    \[
        P(\eta, \omega^\square(\xi_v)\Phi_v)=
        \vol(\K_v^1)^2\cdot 
        \left[
            \int_{\K_v^1}\mm_n(\beta)\,d\beta
        \right]^2=
        \vol(\K_v^1)^4\cdot |4\delta|_w q_w^{-2(e_v-1)}.
    \]
    
    At a place $v=w\bw$, write 
    $h_i=(x_i,x_i^{-1})$ and $\beta_i=(y_i,y_i^{-1})$ for $i=1,2$, with
    $x_i=\iota_w(h_i)=\iota_\bw(h_i)^{-1}$ and
    $y_i=\iota_w(\beta_i)=\iota_\bw(\beta_i)^{-1}$.
    Similar to \eqref{eq:m3}, \eqref{eq:m4}, and \eqref{eq:m5} we have
    \begin{align}
        P(\eta, &\omega^\square(\xi_v)\Phi_v)=
        \chi_w^{-4}(\delta_{v})\notag\\\cdot
        &\int_{\F_v^{\times}}\int_{\F_v}
        \phi_{1w}(x_1^{-1}a_1)\hat{\phi}_{1w}(a_1\delta_v)
        \tilde{\eta}\tilde{\mu}_w(x_1)\,d^\times x\,da_1
        \int_{\F_v^{\times}}\int_{\F_v}
        \phi_{2w}(x_2^{-1}a_2)\hat{\phi}_{2w}(a_2\delta_v)
        \tilde{\mu}_w^{-1}(x_2)
        \,d^\times x_2da_2\notag\\\cdot 
        &\int_{\F_v^{\times}}\int_{\F_v}
        \phi_{1\bw}(x_1b_1y_1^{-1})\hat{\phi}_{1\bw}(b_1\delta_v^{-1})
        \chi^{-1}_w(y_1)|y_1|_v^{-1/2}\,d^\times y_1\,db_1 \notag\\\cdot
        &\int_{\F_v^{\times}}\int_{\F_v}
        \phi_{2\bw}(x_2b_2y_2^{-1})\hat{\phi}_{2\bw}(b_2\delta_{v}^{-1})
        \chi^{-1}_w(y_2)|y_2|_v^{-1/2}\,d^\times y_2\,db_2 \notag\\
        &\overset{y_i\mapsto x_iy_i,\, a_i\mapsto \delta_v^{-1}a_i,\,
        b_i\mapsto \delta_v b_i}{=}
        \chi_w^{-4}(\delta_{v})\notag\\\cdot
        &\int_{\F_v^{\times}}\int_{\F_v}
        \phi_{1w}(x_1^{-1}a_1\delta_{v}^{-1})\hat{\phi}_{1w}(a_1)
        (\chi^{-1}\tilde{\eta}\tilde{\mu})_w(x_1)|x_1|_v^{-1/2}
        \,d^\times x_1\,da_1\label{eq:m3_1}\\\cdot
        &\int_{\F_v^{\times}}\int_{\F_v}
        \phi_{2w}(x_2^{-1}a_2\delta_{v}^{-1})\hat{\phi}_{2w}(a_2)
        (\chi\tilde{\mu})^{-1}_w(x_2)|x_2|_v^{-1/2}\,d^\times x_2\,da_2\label{eq:m3_2}\\\cdot
        &\int_{\F_v^{\times}}\int_{\F_v}
        \phi_{1\bw}(b_1\delta_{v}y_1^{-1})\hat{\phi}_{1\bw}(b_1)
        \chi^{-1}_w(y_1)|y_1|_v^{-1/2}\,d^\times y_1\,db_1 \label{eq:m4'}\\\cdot
        &\int_{\F_v^{\times}}\int_{\F_v}
        \phi_{2\bw}(b_2\delta_{v}y_2^{-1})\hat{\phi}_{2\bw}(b_2)
        \chi^{-1}_w(y_2)|y_2|_v^{-1/2}\,d^\times y_2\,db_2 \label{eq:m5'}
    \end{align}
    While \eqref{eq:m4'} and \eqref{eq:m5'} are equal to 
    exactly \eqref{eq:m4} and \eqref{eq:m5},
    let $x_i\mapsto x_ia_i\delta_v^{-1}$ and
    line \eqref{eq:m3_1} becomes 
    \begin{multline*}
        (\chi(\tilde{\eta}\tilde{\mu})^{-1})_w(\delta_v)
        |\delta_v|_v^{1/2}(1-q_v^{-1})
        \\\cdot 
        \int_{\F_v^\times}\phi_{1,w}(x_1^{-1})(\chi^{-1}\tilde{\eta}\tilde{\mu})_w(x_1)
        |x_1|_v^{-1/2}\,d^\times x_1
        \int_{\F_v^\times}\hat{\phi}_{1,w}(a_1)(\chi^{-1}\tilde{\eta}\tilde{\mu})_w(a_1)
        |a_1|_v^{1/2}\,d^\times a_1\\
        =(\chi\tilde{\eta}\tilde{\mu}^{-1})_w(\delta_v)|\delta_v|_v^{1/2}(1-q_v^{-1})
        \cdot 
        Z(\frac{1}{2},\phi_{1,w}, (\chi^{-1}\tilde{\eta}\tilde{\mu})_w^{-1})
        Z(\frac{1}{2},\hat{\phi}_{1,w}, (\chi^{-1}\tilde{\eta}\tilde{\mu})_w).
    \end{multline*}
    The local functional equation then shows that it is equal to
    \[(\chi(\tilde{\eta}\tilde{\mu})^{-1})_w(\delta_v)|\delta_v|_v^{1/2}(1-q_v^{-1})
    \vol(\oo_v^\times)^2\cdot 
    \varepsilon(\frac{1}{2},  (\chi(\tilde{\eta}\tilde{\mu})^{-1})_w,\psi_w)
    L(\frac{1}{2}, (\chi(\tilde{\eta}\tilde{\mu})^{-1})_v)\]
    except when $w\mid p\fs$ and $\tilde{\eta}_w$ is unramified,
    in which case 
    $Z(\frac{1}{2},\phi_{1,w}, (\chi^{-1}\tilde{\eta}\tilde{\mu})_w^{-1})
    =\vol(\oo_v^\times)$ and \eqref{eq:m3_1} is equal to 
    \[
    (\chi(\tilde{\eta}\tilde{\mu})^{-1})_w(\delta_v)|\delta_v|_v^{1/2}(1-q_v^{-1})
        \vol(\oo_v^\times)^2\cdot 
        \varepsilon(\frac{1}{2}, (\chi(\tilde{\eta}\tilde{\mu})^{-1})_w,\psi_w)
        \frac {L(1/2, (\chi^{-1}\tilde{\eta}\tilde{\mu})_w)}
        {L(1/2, (\chi(\tilde{\eta}\tilde{\mu})^{-1})_w)}.
    \]
    The same computation  shows that 
    \eqref{eq:m3_2} is equal to 
    \[
        (\chi\tilde{\mu})_w(\delta_v)|\delta_v|_v^{1/2}(1-q_v^{-1})\cdot 
        \vol(\oo_v^\times)^2\cdot 
        \varepsilon(\frac{1}{2}, (\chi\tilde{\mu})_w,\psi_w)L(\frac{1}{2}, (\chi\tilde{\mu})_v).
    \]
    Now combine the above computations gives the formulae.
\end{proof}

\begin{cor}\label{cor:period}
    Let $\Phi$, $C(\K^1)$ and $\fc=\fc_+\fc_-$ be
    as in Corollary \ref{cor:Rallis}.
    Let $C_2=\delta^{2\Sigma}\prod_{v\in\finite}c_v$ 
    for $c_v'$ from the previous proposition and
    $z_\delta'\in \A_{\K,f}^\times$ be the product of 
    \begin{itemize}
        \item $(\fc\delta\dd_\F)_w$ when $v=w\bw$ and $v\nmid p\fs\fl\bar{\fl}$,
        \item $(\fc\fl^m\delta\dd_\F)_w$ when $v=w\bw$ and $v\nmid p\fs$,
        \item $\delta_w$ when $v=w\bw$ and $v\mid p\fs$,
    \end{itemize}
    then after making the modification of $\Phi$ at $\fl$,
    the square of the toric period 
    $P_{\bmu}(\Theta^\square_\Phi(\eta,\bnu))^2$ is given by 
    \begin{multline*}
    4(2\pi)^{4\Sigma}C(\K^1)^4C_2
    \frac{\vol([\A_{\K}^1])^2L(\frac{1}{2},\chi)^2
    L(\frac{1}{2},\chi\tilde{\mu})}{\prod_{v\mid\fc_-}(1+q_v^{-1})^4}\\\cdot
    \textnormal{Im}(-\delta)^{k\Sigma}\tilde{\eta}(z_\delta'^{-1})
    \frac{\Gamma((k+1)\Sigma)}{(2\pi)^{k\Sigma}}
    L(\frac{1}{2},\chi(\tilde{\eta}\tilde{\mu})^{-1})\prod_{v\mid p\fs}
    \frac {\varepsilon(\frac{1}{2}, (\chi(\tilde{\eta}\tilde{\mu})^{-1})_w,\psi_w)}
    {L(1/2, (\chi(\tilde{\eta}\tilde{\mu})^{-1})_w)^2}.
    \end{multline*}
\end{cor}
\begin{proof}
    Similar to the argument of Corollary \ref{cor:Rallis},
    the product of all the terms
    from Proposition \ref{prop:period_local} is
\begin{multline*}
(2\pi)^{4\Sigma}
C(\K^1)^4C_2\tilde{\eta}(z_\delta'^{-1})
\left[\frac{L(\frac{1}{2},\chi)}{L^{(\fc_{-})}(1,\epsilon_{\K/\F})}\right]^2
\frac{L(\frac{1}{2},\chi\tilde{\mu})}{L^{(\fc_{-})}(1,\epsilon_{\K/\F})}
\\\cdot
\frac{\textnormal{Im}(-\delta)^{k\Sigma}}{(2\pi)^{k\Sigma}}\Gamma((k+1)\Sigma)
\frac{L(\frac{1}{2},\chi(\tilde{\eta}\tilde{\mu})^{-1})}{L^{(\fc_{-})}(1,\epsilon_{\K/\F})}
\prod
\frac {\varepsilon(\frac{1}{2}, (\chi(\tilde{\eta}\tilde{\mu})^{-1})_w,\psi_w)}
{L(1/2, (\chi(\tilde{\eta}\tilde{\mu})^{-1})_w)^2}
\end{multline*}
and the claim follows from Proposition \ref{prop:period}
and that $\vol([\A_\K^1])=2L(1,\epsilon_{\K/\F})$.
\end{proof}

%

\section{Review of Modular forms}

Let $\W$ and $\V=\W+(-\W)$ be as in the previous section.
Let $\G$ denote the algebraic group
\[
    \G(R)=\GUU(\V)(R)=\{g\in \GL_{4}(\K\otimes_\F R)\mid 
    gw_2g^*=\nu(g)w_2 \text{ for some } \nu(g)\in R^\times\}.
\]
For $g\in M_{4}(\K\otimes_\F R)$, we define the involution
$g^\vee=w_2^{-1}g^*w_2$.
If $g\in \G(R)$, then $g^\vee= \nu(g)g^{-1}$.
To prepare for the $p$-adic interpolation 
of the theta lifts constructed in the previous section,
we review the set-ups of algebraic and $p$-adic modulars
on the unitary similitude group $\G$ following \cite{Hsieh2014}.

Recall that the canonical basis \eqref{def:basisV}
defines a polarization $\V=X_\K\oplus Y_\K$.
Let $Y$ and $X^\vee$ denote respectively the $\OK$-lattices
$\OK\yy^1\oplus \OK\yy^2$ and 
$\dd^{-1}_\K\xx^1\oplus \dd^{-1}_\K\xx^2$.
Then the $\OK$-lattice $M=Y\oplus X$ is self-dual
with respect to the alternating pairing
$\langle\cdot,\cdot\rangle\colon \V\times\V\to \Q$
defined by $\langle v,v'\rangle=\Tr_{\K/\Q} (vw_2v^*)$.
For each finite place $v\in \finite$ we define the open compact subgroup
\begin{equation*}
    \KK_v^0=\{ g\in \G(\F_v)\mid 
    (M\otimes_\OF \oo_v)g=
    (M\otimes_\OF \oo_v)\}.
\end{equation*}

Let $\fc, \fs$ and the choice of Bruhat-Schwartz functions
be as in \S\ref{sec:choice}.
Write $\fn=\fc\fs\dd_\F$,
we consider an open compact subgroup
$\KK(\fn)=\prod_v\KK_v$, where
$\KK_v\subset \KK_v^0$ are open compact subgroups such that 
\begin{enumerate}[label={(\bfseries K\arabic*)}]
    \item $\omega^\square(g)\Phi_v(z)=\Phi_v(z)$ for 
    $g\in \KK_v\cap \UU(\V)(\F_v)$  when $v\nmid p\fs$.
    \item $\KK_v=\{g\in \KK_v^0\mid g\equiv\smat{1&*&*&*\\&*&*&*\\&&1&\\&&*&*}\mod \fn\}$
    when $v\mid \fs$.
    \item $\KK_v=\KK_v^0$ for almost all $v\in\finite$ and when $v\mid p$.
\end{enumerate}
The definition makes sense since $\Phi_v$ 
is independent of the admissible character $\eta$ when $v\nmid p\fs$.
Note that by Lemma \ref{lem:invarianceK} we also have
$\omega^\square(g)\Phi_v(z)=\Phi_v(z)$ for $g\in\KK_v\cap\UU(\V)(\F_v)$.

Write $\fn_\circ=\fc\dd_\F$ when $\fs=\oo_\K$.
We fix such an open compact subgroup $\KK(\fn_\circ)$.
Possibly after shrinking $\KK_v$ at some prime-to-$p$ place $v\in\finite$,
we assume $\KK(\fn_\circ)$ satisfies 
\begin{equation}\tag{neat}
    \KK(\fn_\circ)\subset \{g\in \G(\A_{\F,f})\mid M(g-1)\subset N_0\cdot M\},\quad\nu(\KK)\cap \OF^\times_+\subset (\KK\cap \OF^\times)^2\notag
\end{equation}
for some integer $N_0\geq 3$. 
We will then only consider ideals $\fs$ such that 
$\KK_v^0\subset \KK(\fn_\circ)$. And for such $\fs$
we shall let $\KK(\fn)\subset \KK(\fn_\circ)$ be the subgroup
defined by replacing components at $v\mid\fs$ by the subgroup given above.

On the other hand, for $\K_p\coloneqq \K\otimes_\Q\Q_p$ and
$\F_p\coloneqq \F\otimes_\Q\Q_p$ we have the isomorphisms
$\iota_\Sigma\times\iota_{\Sigma^c}\colon \K_p\cong
\F_p(\Sigma)\times\F_p(\Sigma^c)$,
where we define $\iota_\Sigma=\sum_{w\in\Sigma_p}\iota_w$
and $F_p(\Sigma)$ indicates the $\K_p$-algebra structure on $\F_p$
given by $\iota_\Sigma$.
The notations $\iota_{\Sigma^c}$ and $\F_p(\Sigma^c)$ 
is defined similarly.
We also let $e^+, e^-\in\K_p$ denote the idempotents
corresponding to $\F_p(\Sigma)$ and $\F_p(\Sigma^c)$ respectively.
Then we have the isomorphism
\begin{equation*}
    \iota_\Sigma\colon 
    \Res_{\F/\Q}\G(\Q_p)\mapsto \GL_4(\F_p(\Sigma))\times \F_p^\times,\qquad g_p\mapsto (g_\Sigma, \nu(g_p))
\end{equation*}
which identifies $\KK^0_p\coloneqq \prod_{v\mid p}\KK^0_v$ with 
$\GL_4(\oo_p(\Sigma))\times \oo_p^\times$. 
We define the open compact subgroups
\begin{align*}
    \KK^n&=\{g\in \KK\mid g_\Sigma\equiv \smat{1&&*&*\\&1&*&*\\&&1&\\&&&1}, \nu(g_p)\equiv 1\bmod p^n\},\\
    \KK_1^n&=\{g\in \KK\mid g_\Sigma\equiv \smat{1&*&*&*\\&1&*&*\\&&1&\\&&*&1}, \nu(g_p)\equiv 1  \bmod p^n\},\\
    \KK_0^n&=\{g\in \KK\mid g_\Sigma\equiv \smat{*&*&*&*\\&*&*&*\\&&*&\\&&*&*}\bmod p^n\}.
\end{align*}

\subsection{Kottwitz models}

Let $\keu$ be a finite extension of $\K^{nc}(e^{2\pi i/N_0})$ which is unramified at $p$,
and let $\oeu=\oo_{\keu,(\fp)}$ be the localization
at the prime ideal $\fp$ induced by $\iota_p$.
Let $\oo_{(p)}=\OF\otimes_\Z\Z_{(p)}$ 
and $\oo_{(p),+}$ be the subring of totally positive elements in $\oo_{(p)}$. 

\begin{defn}
    Let $S$ be a locally noetherian connected $\oeu$-scheme
    and $\bar{s}$ be a geometric point of $S$. 
    Write $\KK=\KK(\fn)$ and $\KK^{(p)}\coloneqq \prod_{v\nmid p}\KK_v$.
    An $S$-quadruple of level $\KK^{(p)}$ is a quadruple
    $\underline{A}=(A,\bar{\lambda},\iota,\bar{\eta}^{(p)})_S$ consisting of the following data:
    \begin{enumerate}
        \item $A$ is an abelian scheme of dimension $4d$ over $S$.
        \item $\iota\colon \OK\hookrightarrow \End_S(A)\otimes_\Z \Z_{(p)}$.
        \item $\lambda$ is a prime-to-$p$ polarization of $A$ over $S$
        and $\bar{\lambda}$ is the $\oo_{(p),+}$-orbit
        \[
            \bar{\lambda}=\{ \lambda'\in \Hom(A, A^t)\otimes_\Z\Z_{(p)}\mid 
            \lambda'=\lambda\circ a, a\in \OF_{(p),+}\}.
        \]
        \item $\bar{\eta}^{(p)}=\eta^{(p)}\KK$ is a $\pi_1(S,\bar{s})$-invariant $\KK$-orbit
        of an isomorphism
        $\eta^{(p)}\colon M\otimes \hZ^{(p)}\cong T^{(p)}(A_{\bar{s}})\coloneqq \otimes_{\ell\neq p}T_\ell A_{\bar{s}}$
        between $\OK$-modules.
        Here $\eta^{(p)}g(x)=\eta(xg^\vee)$ for $g\in \KK$.
    \end{enumerate}
    Furthermore, it is required that 
    $(A,\bar{\lambda},\iota,\bar{\eta}^{(p)})_S$ satisfies the following conditions.
    \begin{itemize}
        \item Let $^t$ be the Rosati involution induced by $\lambda$,
        then $\iota(b)^t=\iota(\bar{b})$ for all $b\in \OK$.
        \item Let $e^\lambda$ be the $\pi_1(S,\bar{s})$-invariant Weil pairing induced by $\lambda$,
        which can be regarded as a skew-Hermitian form 
        $e^\lambda\colon T^{(p)}(A_{\bar{s}})\times T^{(p)}(A_{\bar{s}})\to \dd_\K^{-1}\otimes_\Z\hZ^{(p)}$;
        and $e^\eta$ be the skew-Hermitian on $T^{(p)}(A_{\bar{s}})$ induced by the skew-Hermitian form on $M$ via $\eta$, then
        \[
            e^\lambda=u\cdot e^\eta \text{ for some } u \in \A_{\F,f}^{(p)}.
        \]
        That $\eta^{(p)}K$ is $\pi_1(S,\bar{s})$-invariant is the same as that
        the image of $\pi_1(S,\bar{s})$
        under the induced homomorphism $\pi_1(S,\bar{s})\to \G(\A_{\F,f}^{(p)})$ lies in $\KK$.
        \item For all $b\in \OK$, the determinant of $\iota(b)$ on $\Lie A$ satisfies the determinant condition
        \begin{equation*}
            \det(X-\iota(b)\mid \Lie A)=\prod_{\sigma}(X-\sigma(b))^2(X-\sigma(\bar{b}))^2\in \oo_S[X].
        \end{equation*}
    \end{itemize}
\end{defn}

Let $\mathfrak{C}_\KK^{(p)}$ be the fibered category over $SCH_{\oeu}$
where objects over $S$ are the $S$-quadruples and 
\[
    \Hom_{\mathfrak{C}_\KK^{(p)}}(\underline{A},\underline{A}')=
    \{\phi\in \Hom_{\OK}(A,A')\mid \phi^*\bar{\lambda}'=\bar{\lambda},
    \phi(\bar{\eta}'^{(p)})=\bar{\eta}^{(p)}\}
\]
for 
$\underline{A}=(A,\bar{\lambda},\iota,\bar{\eta}^{(p)})_S$ and 
$\underline{A}'=(A',\bar{\lambda}',\iota',\bar{\eta}'^{(p)})_S$.
The functor $\mathfrak{S}^{(p)}_\KK\colon SCH/\oeu\to SETS$ defined by
$\mathfrak{S}^{(p)}_\KK(S)=\{\underline{A}=(A,\bar{\lambda},\iota,\bar{\eta}^{(p)})_S\in \mathfrak{C}^{(p)}_\KK(S)\}/\sim$
is represented by a quasi-projective scheme $S_\G(\KK)$ over $\oeu$.

\subsection{Igusa schemes over the compactification}

For $r\in\{0,1\}$ let $P^r=M^rN^r$ be the stabilizer of $Y^r_\K$ in
the decreasing filtration $Y^0_\K=Y_\K\supset Y^1_\K=\K\yy^2$,
where $M^r$ and $N^r$ are respectively 
the Levi and the unipotent components.
Let $\G_1$ be the unitary similitude group on the subspace generated by $\{\xx^2,\yy^2\}$. 
The set of cusps of genus $2r$ for $S_\G(\KK)$ is defined by
\begin{align*}
    C_0&(\KK)\coloneqq \GL(Y_\K)N^0(\A_{\F,f})\backslash \G(\A_{\F,f})/\KK,\\
    C_1&(\KK)\coloneqq (\GL(Y^1_\K)\times \G_1(\A_{\F,f}))N^1(\A_{\F,f})\backslash \G(\A_{\F,f})/\KK.
\end{align*}
Let $\bar{S}_\G(\KK)$ be a toroidal compacitification over $\oeu$
associated to choices of combinatoric data on each cusp.
There is a quadruple $\underline{\geu}=(\geu, \lambda,\iota,\eta)$ over $\bar{S}_\G(\KK)$
of semi-abelian scheme and its PEL-data,
whose restriction over $S_\G(\KK)$ is the universal quadruple.

\begin{defn}
    Let $n$ be a positive integer and let $M^0=Y\otimes_\Z\Z_p$.
    The Igusa scheme $I_G(\KK^n)$ over $\bar{S}_\G(\KK)$ represents the functor
    \[
    \underline{I_\G}(\KK^n)\coloneqq\underline{\Inj}_{\OK}(\mu_{p^n}\otimes_\Z M^0,\geu).
    \]  
    Let $\KK^n_0$ acts on $j:\mu_{p^n}\otimes_\Z M^0\to \geu$ 
    by $g\cdot j(m^0)=j(m^0\cdot g_p)$. Define 
    $I_\G(\KK^n_\bullet)=I_\G(\KK^n)/\KK^n_\bullet$ for $\bullet=0,1$.
\end{defn}

\subsection{Geometric modular forms}\label{sec:modular_forms}

Let $R$ be an $\oeu$-algebra,
for $\boldsymbol{k}=(a_1,a_2;b_1,b_2)\in\Z^4$, consider the module
\[
    L_{\boldsymbol{k}}(R)=
    \{f\in R[\GL_2\times\GL_2]\mid 
    f(tng)=\boldsymbol{k}^{-1}(t)f(g),\,
    t\in T_2\times T_2,\, n\in N^+_2\times N_2^- \}.
\]
Here $R[\GL_2\times\GL_2]$ denotes the structure ring of 
$\GL_2\times \GL_2$ over $R$,
$T_2$ is the diagonal torus in $\GL_2$,
and $N^+, N^-$ are respectively the upper and lower triangular subgroups.
And $\boldsymbol{k}(\smat{t_1&\\&t_2},\smat{t_3&\\&t_4})\coloneqq 
t_1^{a_1}t_2^{a_2}\cdot t_3^{b_1}t_4^{b_2}$.
Then $L_{\boldsymbol{k}}(R)$ is an algebraic representation of $\GL_2(R)\times \GL_2(R)$
of lowest weight $-\boldsymbol{k}$. 

Define 
$H=\Res_{\OF/\Z} \GL_2\times \Res_{\OF/\Z} \GL_2$,
$T_H=\Res_{\OF/\Z} T_2\times \Res_{\OF/\Z} T_2$,
$N_H=\Res_{\OF/\Z} N^+_2\times \Res_{\OF/\Z} N^-_2$,
and $B_H=N_HT_H$.
Given $\bwt{k}=(a_1,a_2;b_1,b_2)\in \Z[\Sigma]^4$ with 
$a_i=(a_{i,\sigma})$ and
$b_i=(b_{i,\sigma})$, for each $\sigma$ we define 
$\boldsymbol{k}_{\sigma}=(a_{1,\sigma}, a_{2,\sigma}, b_{1,\sigma}, b_{2,\sigma})$.
Then the algebraic representation of $H(R)$ of lowest weight $-\bwt{k}$ is 
$L_{\bwt{k}}(R)\coloneqq \bigotimes_{\sigma\in \Sigma}L_{\boldsymbol{k}_\sigma}(R)$.
We say $\bwt{k}$ is dominant if 
$a_{1,\sigma}\geq a_{2,\sigma}\geq -b_{1,\sigma}\geq -b_{2,\sigma}$
for all $\sigma\in \Sigma$ and put
\[
    \lVert \bwt{k}\rVert\coloneqq (a_{1,\sigma}+a_{2,\sigma}+b_{1,\sigma}+b_{2,\sigma})\in \Z[\Sigma],\quad
    |\bwt{k}|=\sum_{\sigma \in \Sigma}
    (b_{1,\sigma}+b_{2,\sigma})\sigma+
    (a_{1,\sigma}+a_{2,\sigma})\sigma^c\in \Z[I_\K].
\]

Let $\underline{\omega}=e^*\Omega_{\geu/\bar{S}_G(K)}$ be the sheaf of invariant differentials
and decompose
$\underline{\omega}=\oplus_{\sigma\in I_\K}e_\sigma \underline{\omega}$, put
\[
    \bun^+=\bigoplus_{\sigma\in\Sigma}\underline{\Isom}(\oo^2_{\bar{S}_G(K)}, e_\sigma \underline{\omega});\quad
    \bun^-=\bigoplus_{\sigma\in\Sigma^c}\underline{\Isom}(\oo^2_{\bar{S}_G(K)}, e_\sigma \underline{\omega}).
\]
Then $\bun=\bun^+\oplus\bun^-$ is an $H$-torsor over $\bar{S}_G(\KK)$.
Define the automorphic sheave of weight $\bwt{k}$ by $\omega_{\bwt{k}}\coloneqq \bun\times^HL_{\bwt{k}}$.
\begin{defn}
    Let $\bwt{k}$ be a dominant weight.
    When $R$ is an $\Z_{(p)}$-flat $\oeu$-algebra, define the space of geometric modular forms over $R$
    of weight $\bwt{k}$ and level $\KK^n_\bullet$ by
    \[
        \M_{\bwt{k}}(\KK^n_\bullet, R)\coloneqq
        H^0(I_G(\KK^n_\bullet)/R, \omega_{\bwt{k}}),\quad
        \bullet=0, 1, \emptyset.
    \]
    A section $\ff\in \M_{\bwt{k}}(\KK^n_\bullet, R)$ can be viewed
    as an $L_{\bwt{k}}(R)$-valued functions on test objects $(x, \ww)\in \bun(R)$
    that satisfies
    $\ff(x,h\boldsymbol{\omega})
    =\rho_{\bwt{k}}(h)\ff(x,\ww)$
    where $h=(h^+,h^-)\in H(R)$ acts on 
    $\ww=(\ww^+,\ww^-)\in \bun=\bun^+\oplus \bun^-$ by
    $h\cdot \ww(v^+,v^-)=\ww(v^+h^+, v^-h^-)$, for
    $v^+\in \bigoplus_{\Sigma}\oo^2_{\bar{S}_G(K)}$ and
    $v^-\in \bigoplus_{\Sigma^c}\oo^2_{\bar{S}_G(K)}$.
\end{defn}

\subsection{Complex uniformization}

For $\sigma\in I_\K$, let 
$\C(\sigma)=\C$ denote the $\K\otimes\C$-module
on which $\K$ acts through $\sigma$.
We define $\C(\Sigma)=\prod_{\sigma\in\Sigma}\C(\sigma)$
and similarly $\C(\Sigma^c)=\prod_{\sigma\in\Sigma}\C(\sigma c)$.
Then we put $\X=\prod_{\sigma\in\Sigma}\X_{\sigma}$
for $\X_\sigma$ as defined in \S\ref{sec:arch}. In other word
\[
    \X=\{Z=(Z_\sigma)\in M_2(\C(\Sigma))\mid i(Z^*_\sigma-Z_\sigma)>0
    \text{ for all }\sigma\in\Sigma\}.
\]
To each pair $(Z,g)\in \X\times \G(\A_{\F,f})$ we define a quintuple
$(\underline{\cA}_g(Z), j_g)=(\cA_g(Z), \overline{\langle, \rangle}_{can}, \iota_\C, \bar{\eta}_g, j_g)$ as follows.
\begin{enumerate}
    \item $\mathcal{A}_g(Z)\coloneqq \C^{2,2}/M_{[g]}(Z)$, where $\C^{2,2}$ is the $\OK$-module
    $\C(\Sigma^c)^2\oplus \C(\Sigma)^2$ and 
    $M_{[g]}(Z)$ is the image of the $\OK$-lattice $M_{[g]}\coloneqq (M\otimes_\Z\hZ)g^\vee\cap \V$
    under the $\R$-linear map
    $p(Z)\colon \V\otimes_\Q\R\cong \C^{2,2}, p(Z)(v)=c_{2,2}(vB(Z))$.
    Here $c_{2,2}(u_1,u_2)=(\bar{u}_1,u_2)$ on $\C^{2,2}$ and 
    $B(Z)=\smat{Z^*&Z\\\1_2& \1_2}\in M_4(\C^\Sigma)$.
    \item $\overline{\langle,\rangle}_{can}$ is the $\F_+$-orbit of the polarization induced via $p(Z)$ by the alternating pairing on $\C^{2,2}$,
    which is induced from the alternating pairing 
    $\langle\cdot,\cdot\rangle$ on $\V$.
    \item $\iota_\C\colon \OK\to \End \mathcal{A}_g(Z)\otimes_\Z\Q$
    is induced  via $p(Z)$ by the $\OK$-action on $\V$.
    \item the prime-to-$p$ level structure $\iota_g^{(p)}\colon M\otimes\hZ^p\cong M_{[g]}=$ is defined by 
    $\eta_g^{(p)}(x)=xg^\vee$.
    \item identify $\zeta\colon \Z/p^n\Z\cong \mu_{p^n}$ via $\zeta=e^{2\pi i /p^n}$
    \[
        j_\zeta\colon M^0\otimes\mu_{p^n}\cong M^0\otimes\Z/p^n\Z\hookrightarrow
        \mathcal{A}_g(Z)[p^n]=M_{[g]}\otimes \Z/p^n\Z
    \]
    is defined by $j_\zeta(x^0)=x^0g^\vee$.
\end{enumerate}

By the theory of complex uniformization, the $\C$-point of the Igusa schemes $I_\G(\KK^n_\bullet)$ is isomorhpic to the complex variety
$\G(\F)^+\backslash \X\times \G(\A_{\F,f})/\KK^n_\bullet$.

For $g_\infty=(g_\sigma)\in \Res_{\F/\Q}\G(\R)^+$,
let $g_\infty$ acts on $Z=(Z_\sigma)\in \X$ 
as in \S\ref{sec:arch} component-wisely and define
\[
    J(g_\infty, Z)=(J(g_\sigma,Z_\sigma))\in
    \prod_\sigma \GL_2(\C)\times\GL_2(\C)=H(\C).
\]
Let $z^i(\Sigma^c)$ and $z^i(\Sigma)$ be the coordinates on $\C^{2,2}$ for $i=1,2$, 
then $d\underline{z}\coloneqq (dz^1(\Sigma^c),dz^2(\Sigma^c),dz^1(\Sigma),dz^2(\Sigma))$ 
is an $\OK\otimes_\Z \C$-basis of invariant differentials of $\cA_g(Z)$,
which determines an isomorphism $\C^{2,2}\cong \Omega_{\cA}$.
We can view $(\underline{\cA}_g(Z),2\pi i d\underline{z})$
as a test object in $\bun(\C)$, then the map that associates 
a modular form $\ff\in \M_{\bwt{k}}(\KK^n_\bullet, \C)$
to $\ff(Z,g)\coloneqq \ff(\underline{\cA}_g(Z), 2\pi id\underline {z})$
defines an isomorphism from $\M_{\bwt{k}}(\KK^n_\bullet, \C)$ 
to the space of holomorphic modular forms defined below.
\begin{defn}
    Write $\rho^{\bwt{k}}(h)\coloneqq\rho_{\bwt{k}}(h^{-\intercal})$.
    For $\bullet=0,1,\emptyset$,
    a holomorphic modular form
    of weight $\bwt{k}$ and level $\KK^n_\bullet$ is a holomorphic function
    $\ff(Z,g)\colon \X\times \G(\A_{\F,f})\to L_{\bwt{k}}(\C)$ satisfying 
    \begin{equation}\label{defn:holoform} 
        \ff(\alpha Z, \alpha gk)=\nu(\alpha)^{-\lVert \bwt{k}\rVert}\rho^{\bwt{k}}(J(\alpha,Z))
        \ff(Z,g), \quad
        \alpha\in \Res_{\F/\Q}\G(\R)^+,\,
        k\in \KK^n_\bullet
    \end{equation}
    Let $\M_{\bwt{k}}^{\an}(\KK^n_\bullet,\C)$ denote the space of 
    holomorphic modular forms
    of weight $\bwt{k}$ and level $\KK^n_\bullet$.
\end{defn}

Moreover, suppose $\ff(Z,g_f)$, 
for $g_f\in \G(\A_{\F,f}^{(p)})$, has the Fourier expansion
\[
    \ff(Z,g_f)=\sum_{\beta\in\her_2(\F)}a_\beta(g_f)q^\beta,\quad
    a_\beta(g_f)\in L_{\bwt{k}}(\C),
\]
for $q^\beta\coloneqq e^{2\pi i\sum_{\sigma\in\Sigma}\mtr(Z_\sigma\beta)}$.
Let $[g_f]$ be the class of $g_f$ in $C_0(\KK)$ and let
$A\in \GL(M_0)$. 
If we put $m(A)=\smat{A^{-*}&\\&A}\in \G(\F_p)$ and
view $([g_f],A)$ as a cusp for $I_G(\KK^n)$, then
the Fourier expansion of $\ff(Z,m(A)g_f)$,
viewed as a formal series in $q^\beta$, coincides with 
\begin{equation}\label{def:Fourier}
    F_{[g_f]}^{A}(\ff)\coloneqq
    \ff(\underline{\mathscr{M}}_{[g_f]}, A^{-1}j_{\mathscr{M}_{[g_f]}}, d^\times t)
\end{equation}
for a canonical Mumford quintuple 
$(\underline{\mathscr{M}}_{[g_f]}, j_{\mathscr{M}_{[g_f]}})$ 
associated to the cusp.

\begin{defn}\label{defn:autoform}
    Let $\omega$ be an algebraic Hecke character of infinity type $|\bwt{k}|$.
    An automorphic form of weight $\bwt{k}$ and level $\KK^n_\bullet$ is a 
    smooth and slowly-increasing function
    $\ff\colon \G(\A_{\F})\to L_{\bwt{k}}(\C)$ satisfying 
    \begin{equation*}
        \ff(z\alpha gk_\infty k)=\rho^{\bwt{k}}(J(k_\infty, \ii)^{-1})\ff(g)\omega^{-1}(z),
        \quad 
        \alpha\in \G(\F),\, k\in \KK^n_\bullet,\, z\in Z_\G(\A_{\F})
    \end{equation*}
    and $k_\infty$ belongs to the stabilizer $C_\infty\subset \Res_{\F/\Q}\G(\R)^+$ of
    $\ii\coloneqq (\ii_\sigma)\in \X$.
    Let $\AM_{\bwt{k}}(\KK^n_\bullet,\omega)$ denote the space of 
    automorphic forms of weight $\bwt{k}$, level $\KK^n_\bullet$
    and central character $\omega^{-1}$.
\end{defn}
To an automorphic form $\ff(g)\in \AM_{\bwt{k}}(\KK^n_\bullet,\omega)$,
we can associate a holomorphic modular form by
$\ff(Z,g_f)=\omega_f(\nu(g_f))
\rho^{\bwt{k}}(J(g_\infty,\ii))\ff(g_\infty g_f)$
for any $g_\infty\in \Res_{\F/\Q}\G(\R)^+$ satisfying $g_\infty \ii=Z$.

\subsection{p-adic modular forms}

When $R$ is a $p$-adic $\oeu$-algebra and $m\in \Z_{\geq 1}$,
we define $T_{n,m}\coloneqq I_\G(\KK^n)/\oeu_m$ for $n\geq m$,
where $R_m\coloneqq R/p^mR$, and put
\begin{align*}
    V_{n,m}&=H^0(T_{n,m},\oo_{T_{n,m}}),\\
    V_{\bwt{k}}(\KK^n_1,R_m)&=H^0(T_{n,m/R_m},\omega_{\bwt{k}})^{\KK^n_1}
    \cong V^{\KK^n_1}_{m,n}\otimes L_{\bwt{k}}(R_m).
\end{align*}
Here to each $R_m$-quintuple $(\underline{A},j)$
one can define a canonical $(\underline{A},j,\ww(j))\in\bun(R_m)$.
and the isomorphism above is given by 
mapping $\ff\in V_{\bwt{k}}(\KK^n_1,R_m)$ to
$\hat{\ff}(\underline{A},j)\coloneqq \ff(\underline{A},j,\ww(j))$,
which we refer as the $p$-adic avatar of the modular form $\ff$.

\begin{defn}\label{def:avatar}
    Let $\blk$ be the functional on $L_{\bwt{k}}$ given by 
    evaluating at the identity $\id\in H$, 
    which amounts to projecting to the lowest weight vector of
    $\rho_{\bwt{k}}$ (equivalently the highest weight vector of
    $\rho^{\bwt{k}}$).
    We then define 
    \[
        \beta_{\bwt{k}}\colon V_{\bwt{k}}(\KK^n_1,R_m)\to 
        V_{m,n}^{\KK^n_1}\quad
        \beta_{\bwt{k}}(\ff)\coloneqq \blk(\hat{\ff}).
    \]
    Let $V_p(\G,\KK)$ be the space of $p$-adic modular forms
    defined as $\varprojlim_{m}\varinjlim_{n} V_{n,m}^{\KK^n_1}$.
    For $\ff\in\M_{\bwt{k}}(\KK_1^n,R)$, 
    we put $\ff_m=\ff\bmod p^m$, viewed as an element in 
    $V_{\bwt{k}}(\KK^m_1,R_m)$ and define 
    \[
        \hat{\ff}=\varprojlim_m \hat{\ff}_m\in 
        \varprojlim_{m}\varinjlim_{n} V_{\bwt{k}}(\KK^m_1,R_m)
        \text{ and }
        \beta_{\bwt{k}}(f)=\varprojlim_m \beta_{\bwt{k}}(\hat{\ff}_m)\in 
        V_p(\G,\KK),
    \]
    which we refer respectively as the $p$-adic avatar and 
    the p-adic modular form associated to $\ff$.
\end{defn}

Similar to \eqref{def:Fourier}, if $\ff\in V_{m,n}$, 
we can define the Fourier expansion at the cusp $([g_f],A)$ by
\begin{equation*}
    \hat{F}_{[g_f]}^{A}(\ff)\coloneqq
    \ff(\underline{\mathscr{M}}_{[g_f]}, A^{-1}j_{\mathscr{M}_{[g_f]}}),
\end{equation*}
which we extends to $V_p(\G,\KK)$ by taking the limit.
We also define 
$\hat{F}_{[g_f]}^{A}(\ff)=\hat{F}_{[g_f]}^{A}(\beta_{\bwt{k}}(\ff))$
if $\ff\in V_{\bwt{k}}(\KK^n_1,R_m)$.

\begin{lem}\label{lem:Fouerier_compare}
   Write $A=(a_1,a_2)\in \GL(M_0)$
   with respect to the decomposition $M_0=e^+M_0\oplus e^-M_0$.
   If $\ff\in \M_{\bwt{k}}(\KK_1^n,R)$ 
   and $\beta_{\bwt{k}}(\ff)$ is the $p$-adic modular form 
   associated to $\ff$, then the Fourier expansion of 
   $\ff$ at the cusp $([g_f],A)$
   defined in \eqref{def:Fourier} and that of $\beta_{\bwt{k}}(\ff)$
   are related by 
   \[
        \hat{F}_{[g_f]}^A(\beta_{\bwt{k}}(\ff))=
        \blk(\rho^{\bwt{k}}(a_2^{-1},a_1^{-1})F_{[g_f]}^A(\ff)).
   \]
   Here for $(g_1,g_2)\in\GL_2(\F_p)\times \GL_2(\F_p)$,
   we define 
    $\rho^{\bwt{k}}(g_1,g_2)=\prod_{w\in\Sigma_p}\prod_{\sigma\in I_w}
    \rho^{\boldsymbol{k}_\sigma}(g_{1,w},g_{2,w})$.
\end{lem}
\begin{proof}
    This follows from that $d^\times t=\ww(j_{\mathscr{M}})$
    for the Mumford family 
    $\underline{\mathscr{M}}=\underline{\mathscr{M}}_{[g_f]}$, thus
    \begin{multline*}
        \hat{\ff}(\underline{\mathscr{M}},A^{-1}j_{\mathscr{M}})=
        \ff(\underline{\mathscr{M}},A^{-1}j_{\mathscr{M}},
        \ww(A^{-1}j_{\mathscr{M}}))=
        \ff(\underline{\mathscr{M}},A^{-1}j_{\mathscr{M}},
        A^\intercal\ww(j_{\mathscr{M}}))\\=
        \rho_{\bwt{k}}(A^\intercal)
        \ff(\underline{\mathscr{M}},A^{-1}j_{\mathscr{M}},
        \ww(j_{\mathscr{M}}))=
        \rho^{\bwt{k}}(A^{-1})
        \ff(\underline{\mathscr{M}},A^{-1}j_{\mathscr{M}},
        d^\times t)=
        \rho^{\bwt{k}}(a_2^{-1}, a_1^{-1})
        \ff(\underline{\mathscr{M}},A^{-1}j_{\mathscr{M}},
        d^\times t).
    \end{multline*}
    Note that here we follow \cite{Hsieh14} and identify
    $H(\Zp)$ with $\GL(e^-M^0)\times \GL(e^-M^0)$.
\end{proof}

\subsection{Definite unitary groups}

Let $G$ denote the algebraic group $\GUU(\W)$.
Since the skew-Hermitian pairing on $\W$ is defined by
$\bdelta=\delta\id_2$ for $\delta$ as in \ref{delta_cond2}, 
we can identify
\[
    G(R)\coloneqq\GUU(\W)=\{g\in \GL_2(\K\otimes_\F R)\mid 
    gg^*=\nu(g) \text{ for some } \nu(g)\in R^\times\}.
\]
Let $G(R)^+=\{g\in G(R)\mid \nu(g)\in \Nr(\K\otimes_\F R)\}$ and
let $L$ be the $\oo_\K$-lattice $\oo_\K\ee_1\oplus \oo_\K\ee_2$.
Fix a sufficiently small open compact subgroup 
$K\subset G(\A_{\F,f})$. A similar moduli problem of
quadrauples of PEL abelian schemes $\underline{\cB}$
is then represented by a finite $\oeu$-scheme $S_{G}(K)$
whose $\C$-points is the finite set
\[
    S_{G}(K)(\C)=G(\F)^+\backslash G(\A_{\F,f})/K.
\]
by the theory of complex uniformization.
Let $L^{-1}\oplus L^{0}$ be the polarization of 
$L_p\coloneqq L\otimes_\Z\Zp=$ defined by $L^{-1}=e^+L_p, L^0=e^- L_p$.
We can similarly define the Igusa moduli problem of level $p^n$
over $S_G(K)$ by considering the extra structure of
$j\colon \mu_{p^n}\otimes L^0\hookrightarrow \cB[p^n]$.
By the theory of complex multiplication, 
the $p^\infty$-level Igusa tower is then
defined over $\mathcal{W}_p$, the $p$-adic completion 
of the ring of integers of the maximal unramified extension of $\Qp$.

Write $\mathcal{S}=S_{G}(K)$ 
and let $\pi\colon \cB\to \mathcal{S}$ be the universal abelian scheme.
As explained in \cite[\S 2.8]{Hsieh2014},
$\mathcal{H}_\cB=\pi_*(\Omega_{\cB/\mathcal{S}})$
is a free $e^+(\OK\otimes_\Z\oo_\mathcal{S})$-module of rank $2$,
and after possibly replacing $\oeu$ by a finite unramified extension
there exists a 
$e^+(\oo_\K\otimes_\Z\oo_\mathcal{S})$-basis $\ww_\cB$ for $\mathcal{H}_\cB$
and a pair of periods 
$(\Omega_{G,\infty}, \Omega_{G,p})\in
\GL_2(e^+(\OK\otimes_\Z\C))\times GL_2(e^+(\OK\otimes_\Z\mathcal{W}_p))$ 
satisfying 
\begin{equation}\label{eq:CMperiod}
    \Omega_{G,\infty} d\underline{z}_\W=\ww_\cB=\Omega_{G,p}d^\times t,
\end{equation}
where $d\underline{z}_\W$ is the basis of invariant differentials
coming from the complex uniformization
and $d^\times t$ is the basis induced by the structures $j$ over the Igusa tower.
Moreover, since $\bdelta=\delta\id_2$, by \ref{delta_cond2}
the abelian schemes over $\mathcal{S}$
are prime-to-$p$ isogeneous to product of 
two abelian schemes with complex multiplications. 
We may therefore assume 
$\Omega_{G,\infty}=\Omega_\infty\id_2$ and 
$\Omega_{G,p}=\Omega_p\id_2$,
where $(\Omega_\infty, \Omega_p)$ is the pair of CM periods 
as in \cite{Hsieh12}.

\subsubsection{Embedding between Igusa schemes}

Recall that $-\W$ is the skew-Hermitian space defined by $-\delta\id_2$
with the basis $\{\ee^-_1,\ee^-_2\}$.
Let $(-L)$ be the $\OK$-lattice $\OK\ee^-_1\oplus\OK\ee^-_2$
and let $(-L)^{-1}\oplus (-L)^{0}$ be the polarization on
$(-L)_p\coloneqq(-L)\otimes_\Z\Zp$ given by
$(-L)^{-1}=e^-(-L)_p, (-L)^0=e^+ (-L)_p$.
Now on $\V=\W+(\W)$ we have the lattice $\mathbf{L}=L+(-L)$.
By the choice of the basis \eqref{def:basisV}
$M_p\coloneqq M\otimes_\Z\Zp$ and 
$\mathbf{L}_p\coloneqq\mathbf{L}\otimes_\Z\Zp$ coincide
by \ref{delta_cond2}.
On which we have the two polarizations given by
\begin{itemize}
    \item $M^{-1}=X^\vee\otimes_\Z\Zp$ and $M^{0}=Y\otimes_\Z\Zp$,
    \item $\mathbf{L}^{-1}=L^{-1}+(-L)^{-1}$ and 
    $\mathbf{L}^0=L^{0}+(-L)^0$.
\end{itemize}

Given $(g_1,g_2)\in G(\UU(\W)\times \UU(-\W))(\A_{\F,f}^{(p)})$,
let $(\underline{\cB}_{g_1},j_{g_1})$ and $(\underline{\cB}_{g_2},j_{g_2})$
be the associated qintuple defined by the uniformization.
Put $\Upsilon_p=\prod_{v\mid p}\xi_v$ for 
$\xi_v$ given by \eqref{def:xi}, then we have 
and $\mathbf{L}^{0}\xi_p= M^0$. It can then be checked that 
\[
    (\underline{\cA}_{g\Upsilon}(Z_\delta), j_{g\Upsilon})\cong 
    (\underline{\cB}_{g_1}\times \underline{\cB}_{g_2},
    j_{g_1}\times j_{g_2}),
\]
for $g=\iota(g_1,g_2)\in \G(\A_{\F,f})$
and $Z_{\delta}=(\Z_{\delta,\sigma})\in \X$ 
as in Definition \ref{def:point}.
More precisely, suppose $\ff$ is a holomorphic modular form
or a modular form with the $p$-adic avatar $\hat{\ff}$, then 
\begin{align}
    \ff(Z_\delta, g\Upsilon)&=
    \ff(\underline{\cA}_{g\Upsilon}(Z_\delta), j_{g\Upsilon}, 
    2\pi i d\underline{z})=
    \ff(\underline{\cB}_{g_1}\times \underline{\cB}_{g_2}, 
    j_{g_1}\times j_{g_2}, 2\pi id\underline{z}_\W\times 
    2\pi id\underline{z}_\W),\label{eq:pullback}\\
    \hat{\ff}(\underline{\cA}_{g\Upsilon}(Z_\delta), j_{g\Upsilon})&=
    \hat{\ff}(\underline{\cB}_{g_1}\times \underline{\cB}_{g_2}, 
    j_{g_1}\times j_{g_2})=
    \ff(\underline{\cB}_{g_1}\times \underline{\cB}_{g_2}, 
    j_{g_1}\times j_{g_2},d^\times t\times d^\times t).
    \label{eq:padic_pullback}
\end{align}

\section{Hida theory on the definite unitary group}

In this section we review the Hida theory
of algebraic modular forms on the definite unitary group $\UU(\W)$.
Results on the vertical control theorem are quoted from \cite{geraghty},
and the pairing between Hida families
is adopted from the case of definite quaternion from \cite{Hsieh21}.

To simplify notations, for an $\F$-algebra $R$ we write 
\[
    G_1(R)\coloneqq \UU(\W)(R)=\{g\in \GL_{2}(\K\otimes_\F R)\mid 
    gg^*=\id_2 \}.
\]
Given the lattice $L=\oo_\K\ee_1\oplus\oo_\K\ee_2$,
we define for each finite place $v\in\finite$ of $\F$
the open compact subgroup
\[
    K_v^0=\{g\in G_1(\F_v)\mid (L\otimes_\OF\oo_v)g=(L\otimes_\OF\oo_v)\}.
\]
Let $\fc$ and $\fs$ be as in \S\ref{sec:choice}
and write $\fn=\fc\fs\dd_\F$.
Given the choice of compact open subgroups 
$\KK(\fn)$ in the previous section and let $\xi_v\in \G(\F_v)$ be as in 
Proposition \ref{prop:Rallis_local}, we consider an open compact subgroup
$K(\fn)=\prod_vK_v$, where
$K_v\subset K_v^0$ are open compact subgroups such that 
\begin{enumerate}[label={($K$\arabic*)}]
    \item $K_v=\bar{K}_v$ and
    $\iota_{\xi_v}(K_v,1)\subset \KK_v$ when $v\nmid p\fs$.
    \item $K_v=\{g_v\in K_v^0\mid g_w\equiv \smat{1&*\\&*}\mod \fn\}$ when $v\mid \fs$.
    \label{enu:K2}
    \item $K_v=K_v^0$ for almost all $v\in\finite$ and when $v\mid p$.
\end{enumerate}
Note that by the proof of Proposition \ref{prop:xi_split}
we also have $\iota_{\xi_v}(K_v,1)\subset \KK_v$
when $v\mid \fs$.

Write $\fn_\circ=\fc\dd_\F$ when $\fs=\OK$.
Throughout the article we fix such 
an open compact subgroup $K(\fn_\circ)$. 
Possibly after shrinking $K_v$ at some $v\in\finite$
we assume $K(\fn_\circ)$ is sufficiently small in the sense that 
\[
    G_1(\A_{\F,f})=\sqcup_i G_1(\F)t_i K(\fc),\quad
    t_i^{-1}G_1(\F)t_i\cap K(\fc)=\{1\} \text{ for all } i.
\]
Then we will only consider ideals $\fs$ such that 
$K_v^0\subset K(\fn_\circ)$ for all $v\mid \fs$.
And for such $\fs$ we shall let $K(\fn)\subset K(\fn_\circ)$
be the subgroup defined by replacing components at $v\mid \fs$
by the subgroup given in \ref{enu:K2}.

As in the previous section, we have an isomorphism
$\iota_\Sigma\colon\Res_{\F/\Q}G_1(\Q_p)\cong \GL_2(\F_p(\Sigma))$ 
which identifies $K_p^0\coloneqq \prod_{v\in S_p}K_v^0$ with 
$\GL_2(\oo_p(\Sigma))$.
We define the following open compact subgroups and the double quotients
\begin{align*}
    K_1^n(\fn)&=\{(g_v)\in K\mid g_\Sigma\equiv \smat{1&*\\&*}\bmod p^n\},\quad
    C(K^n_1(\fn))\coloneqq G_1(\F)\backslash G_1(\A_{\F,f})/K^n_1(\fn);\\
    K_0^n(\fn)&=\{(g_v)\in K\mid g_\Sigma\equiv \smat{*&*\\&*}\bmod p^n\},\quad
    C(K^n_0(\fn))\coloneqq G_1(\F)\backslash G_1(\A_{\F,f})/K^n_0(\fn).
\end{align*}
When it's not necessary to specify the dependence on $\fn$,
we write $K_\bullet^n=K_\bullet^n(\fn)$ for short.

For $\wt{k}=(a_1,a_2)\in \Z[\Sigma]^2$,
let $L_{\wt{k}}=\otimes_\sigma L_{\wt{k}_\sigma}$
be the algebraic representation 
of $\Res_{\OF/\Z} \GL_2$ of lowest weight $-\wt{k}$
defined as in \ref{sec:modular_forms},
and $\lk$ be the functional defined as in Definition \ref{def:avatar}.
We also fix a lower weight vector $\vk$
such that $\lk(\vk)=1$.
We assume throughout that $\wt{k}=(k,0)$ for some $k\in \Z[\Sigma]$.
If $g_p\in \Res_{\F/\Q}G_1(\Q_p)$ we define
\[
    \rho_{\wt{k}}(g_p)=\prod_{w\in \Sigma_p}
    \prod_{\sigma\in I_w}\rho_{\wt{k}_\sigma}(g_w).
\]
We now let $\oo$ be the ring of integer of a field extension
$E/\Qp$ that contains $\sigma_p(\K)$ for all $\sigma\in \Hom(\K,\C)$
and let $R$ be a $p$-adic $\oo$-algebra.
\begin{defn}\label{def:algform}
    For $\wt{k}\in \Z[\Sigma]^2$ as above
    we let $\pMk(K^n_1,R)$ denote
    the space of algebraic modular forms of weight $\wt{k}$
    and level $K^n_\bullet$ consisting of functions 
    \[
        \pf(g)\colon G_1(\F)\backslash G_1(\A_{\F,f})\to L_{\wt{k}}(R),
        \quad \pf(gu)=\rho_{\wt{k}}(u)^{-1}\pf(g)\text{ when }
        u\in K^n_1
    \]
    where we put $\rho_{\wt{k}}(u)=\rho_{\wt{k}}(u_p)$
    if $u\in K_1^n$.
    We also write $M(K^n_1,R)\coloneqq\pMk(K^n_1,R)$
    when $\wt{k}$ is the trivial weight, in which case
    $f(g)$ is simply a $R$-valued function on $C(K_1^n)$.
\end{defn}

Let notations be as in Proposition \ref{prop:inv_Up},
we define the following Hecke operators acting on 
on $\pMk(K^n_1, R)$.
Recall that we have fixed $\fs=\mathfrak{f}\bar{\mathfrak{f}}$
for $\mathfrak{f}+\bar{\mathfrak{f}}=\OK$.
And when $v=w\bw$ and $v\mid p\fs$ we assume 
$w\in\Sigma_p$ or $w\mid\mathfrak{f}$.
\begin{itemize}
    \item When $v\in\finite$ is prime-to-p finite place that 
    splits into $w\bw$ in $\K$ and $K_v^0\subset K$, we define 
    \begin{equation}\label{eq:hecke_awayp}
    T^{(1)}_w\pf(g)=\sum_{b\in \oo_v/(\varpi_v)}
    \pf(g \smat{\varpi_v &b\\&1})+
    \pf(g \smat{1&\\&\varpi_v }),\quad
    T^{(2)}_w\pf(g)=
    \pf(g \smat{\varpi_v&\\&\varpi_v })
    \end{equation}
    \item When $v=w\bw$ splits and  $w\in \Sigma_p$, 
    for $u\in \oo_v^\times$ we define 
    $\langle u\rangle_{\wt{k}}\pf(g)=\pf(gu)$ and
    \begin{equation}\label{eq:hecke_atp}
    U^{(1)}_{\wt{k},w}\pf(g)=\wt{k}(\varpi_v,1)\sum_{b\in \oo_v/(\varpi_v)}
    \rho_{\wt{k}}(\smat{\varpi_v&b\\&1})
    \pf(g \smat{\varpi_v &b\\&1}),\quad
    U^{2}_{\wt{k},w}\pf(g)=
    \pf(g \smat{\varpi_v&\\&\varpi_v})
    \end{equation}
    \item When $v=w\bw$ splits and $w\mid \mathfrak{f}$, we define 
    $\langle u\rangle\pf(g)=\pf(gu)$ for $u\in \oo_v^\times$ and
    \begin{equation}\label{eq:hecke_ats}
    U^{(1)}_w\pf(g)=\sum_{b\in \oo_v/(\varpi_v)}
    \pf(g \smat{\varpi_v &b\\&1}),\quad
    U^{(2)}_w \pf(g)=\pf(g \smat{\varpi_v&\\&\varpi_v}).
    \end{equation}
\end{itemize}
When $\rho_{\wt{k}}$ is the trivial representation we will 
write $U_w^{(i)}$ for $U_{\wt{k},w}^{(i)}$.

Note that for $w\in\Sigma_p$ and $u\in 1+p^n\oo_v$,
by definition we have 
$\langle u\rangle_{\wt{k}}\pf(g)=\wt{k}(u,u)\pf(g)$
if $\pf(g)\in M_{\wt{k}}(K^n_1,R)$.
Similarly, for $v=w\bw$ with $v\mid\fs$
the operator $\langle u\rangle$ defines 
an action of $\Delta_w(\fs)\coloneqq (\oo_w/\fs)^\times$ on 
$M_{\wt{k}}(K^n_1,R)$.
Therefore if we define the subgroups
$T(p^n)=\prod_{w\in\Sigma_p}T(w^n)$ for $n\geq 0$,
where $T(w^n)=1+p^n\oo_w$ if $n\geq 1$
and $T(w^0)=\oo_w^\times$, then $M_{\wt{k}}(K^n_1,R)$ is naturally a
module of the Iwasawa algebras
\begin{equation}\label{def:lambda}
\Lambda\coloneqq \oo\llbracket T(p^1)\rrbracket\quad
\Lambda^+=\oo\llbracket T(p^0)\times \Delta_\fs\rrbracket\cong
\Lambda[\Delta_p\times\Delta_\fs],
\end{equation}
where $\Delta_p=\prod_{w\in\Sigma_p}\Delta_w$
with $\Delta_w\coloneqq (\oo_w/\varpi_w)^\times$ and 
$\Delta_\fs=\prod_{w\mid \mathfrak{f}}\Delta_w(\fs)$.

Since $K^n_1$ is sufficiently small by assumption,
we have the isomorphism
\[
    \pMk(K^n_1,R)\to 
    \bigoplus_{[g]\in C(K^n_1)}L_{\wt{k}}(R),\quad
    \pf \mapsto (\pf(g))_{[g]\in C(K^n_1)}
\]
and consequently the ordinary projector
$\ordp\coloneqq\lim_{r\to\infty}(\prod_{w\in\Sigma_p}
U^{(1)}_{\wt{k},w}U^{(2)}_{\wt{k},w})^{r!}$
converges to an idempotent in 
the finite-free $R$-algebra $\End_R(\pMk(K^n_1,R))$.
\begin{defn}
We define the subspace of ordinary modular forms as
\[
    \pMk^{\ord}(K^n_1, R)=\ordp\pMk^{\ord}(K^n_1, R)=
    \{\pf\in \pMk(K^n_1,R)\mid 
    \ordp\pf=\pf\}.
\]
\end{defn}

\begin{rem}
    Our normalization of the Hecke operators $\langle u\rangle_{\wt{k}}$
    is different from that of \cite{geraghty}.
\end{rem}

\subsection{p-adic modular forms}
Recall that for a p-adic ring $R$ we define
$R_m=R/p^mR$ when $m\in \Z_{\geq 1}$.
\begin{defn}
    Let $V_{n,m}=\pM(K^n_1,\oo_m)$ for $m,n\geq 1$.
    The space of $p$-adic modular forms and 
    the subspace of ordinary forms are defined respectively as
    \[
        V_p(G_1,K)=\varprojlim_m\varinjlim_n V_{n,m},\quad
        V^{\ord}_p(G_1,K)= \ordp V_p(G_1,K)
        =\varprojlim_m\varinjlim_n V_{n,m}^{\ord}
    \]
    Since the Hecke actions defined above on $V_{n,m}$ are 
    compatibly among the limits, we may extend the Hecke actions to
    $V_p(G_1,K)$ and $V^{\ord}_p(G_1,K)$.
    In particular, each spaces admit a structure of $\Lambda^+$-modules.
\end{defn}

Alternatively, let $\oo[C(K^n_1)]$ be the finite free $\oo$-module
generated on $C(K^n_1)$.
Then the evaluation map defines an isomorphism
$V_{n,m}\cong \Hom_\oo(\oo[C(K^n_1)],\oo_m)$ of $\oo$-modules,
which endow $\oo[C(K^n_1)]$ with Hecke actions
and the structure of a $\Lambda^+$-module.
Define $\oo\llbracket C(K)\rrbracket=\varprojlim_n \oo[C(K^n_1)]$.
By passing to the limit we obtain an isomorphism
\begin{equation*}
    V_p(G_1,K)\cong 
    \varprojlim_m\varinjlim_n \Hom_{\oo}(\oo[C(K^n_1)],\oo_m)
    \cong \varprojlim_m
    \Hom_{\oo}^{\cts}(\oo\llbracket C(K)\rrbracket, \oo_m)
    \cong \Hom_{\oo}^{\cts}(\oo\llbracket C(K)\rrbracket, \oo),
\end{equation*}
where we say a homomorphism 
$\phi\in\Hom_{\oo}(\oo\llbracket C(K)\rrbracket, \oo)$
is continuous if for any $m\geq 1$ there exists 
an integer $n$ such that $\phi\bmod p^m$
factors through $\oo[C(K^n_1)]$.
Consequently we also have the isomorphism
\begin{equation}\label{eq:MM}
    V_p^{\ord}(G_1,K)=
    eV_p(G_1,K)\cong 
    \Hom_{\oo}^{\cts}(e\oo\llbracket C(K)\rrbracket, \oo).
\end{equation}

Since we have assumed $\wt{k}=(k,0)$, 
the action of $\rho_{\wt{k}}(g)$ on $v_{-\wt{k}}$
is trivial modulo $p^m$ when $g\in K^m_1$.
Thus for $\pf\in \pMk(K^n_1,R)$ we have
$\pf_m(g)\coloneqq \lk(\pf(g))\bmod p^m\in V_{m,m}$.
We can therefore define an injective homomorphism
\[
\bbeta_{\wt{k}}\colon \pMk(K^n_1,R)\to V_p(G_1,K)\quad
\beta_{\wt{k}}(\pf)=\varprojlim_m \pf_m
\]
Note that when $w\in\Sigma_p$, the homomorphism 
is equivariant with respect to $U_{\wt{k},w}$
since $\rho_{\wt{k}}(\smat{\varpi_v&b\\&1})v_{-\wt{k}}
=\wt{k}(\varpi_v,1)^{-1}v_{-\wt{k}}$.
It is also straightforward to check that $\bbeta_{\wt{k}}$
is equivariant with respect to 
all the other Hecke operators defined above as well.

The proposition below is a consequence of the control theorem \cite{geraghty}, which describes the image of $\bbeta_{\wt{k}}$.

\begin{prop}\label{prop:control}
    Let $\fa_{\wt{k},n}\subset \Lambda$ be the ideal generated by 
    $\{\langle u\rangle-\wt{k}(u,u)\mid u\in T(p^n)\}$, then
    \begin{equation*}
        \bbeta_{\wt{k}}(\pMk^{\ord}(K^n_1,R))=
        V_p^{\ord}(G,K)[\mathfrak{a}_{\underline{k},n}].
    \end{equation*}
\end{prop}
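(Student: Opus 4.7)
The proof is a Hida-Geraghty vertical control theorem for the definite unitary group, essentially the content of \cite{geraghty}; I sketch its three parts.

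First, $\bbeta_{\wt{k}}(\pf)$ lies in the $\mathfrak{a}_{\wt{k},n}$-torsion. For $t\in 1+p^n\oo_p$ in the center of $K_p^0$, we have $t\in K_1^n$, so the transformation law of $\pf$ gives $\pf(gt)=\rho_{\wt{k}}(t)^{-1}\pf(g)=\wt{k}(t)\pf(g)$ and hence $[t].\pf=\pf$. Equation \eqref{eq:alg_to_p_adic} then yields $[t].\bbeta_{\wt{k}}(\pf)=\wt{k}(t)\bbeta_{\wt{k}}(\pf)$, as required.

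For injectivity, since $\wt{k}=(k,0)$, $L_{\wt{k}}$ decomposes under the diagonal torus into weight spaces, with lowest weight vector $\vk$ and dual functional $\lk$. The normalized action $\wt{k}(p,1)\rho_{\wt{k}}(\smat{p&b\\&1})$ fixes $\vk$ while sending higher weight vectors to expressions scaled by positive powers of $p$, so iterates of $U_p$ act on $\pf$ in a $p$-adically triangular fashion relative to the weight decomposition. If $\bbeta_{\wt{k}}(\pf)=0$, so that $\lk(\pf(g))\in p^mR$ for every $m$ and $g$, then invertibility of $U_p$ on $\pf$ (which is ordinary) combined with this triangular structure forces each weight component of $\pf$ to be $p$-divisible to arbitrary order, hence zero by $p$-adic separatedness of $L_{\wt{k}}(R)$.

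The main content is surjectivity. Given $\bar{\pf}\in V_p^{\ord}(G,K)[\mathfrak{a}_{\wt{k},n}]$, represented by a compatible system $\bar{\pf}_m\colon C(K_1^{n_m})\to R_m$, I would build a preimage $\pf\in\pMk^{\ord}(K_1^n,R)$ by specifying its pairings with a weight basis $\{\ell_\mu\}$ of $L_{\wt{k}}^\vee(R)$. Each $\ell_\mu$ can be recovered from $\lk$ by applying lowering operators, which in the group are realized by translations of the form $g\mapsto g\smat{p^r&b\\&1}$; the necessary normalization requires powers of $U_p^{-1}$, which exist on the ordinary part. Defining $\ell_\mu(\pf(g))$ as the corresponding linear combination of values of $\bar{\pf}$ at Hecke translates of $g$ yields a candidate $\pf$, and the $\mathfrak{a}_{\wt{k},n}$-torsion condition on $\bar{\pf}$ is precisely what ensures that the result satisfies the full $K_1^n$-equivariance under $\rho_{\wt{k}}$ rather than merely the $K_1^{n_m}$-invariance of the approximants. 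The main obstacle is to verify Hecke compatibility and convergence of the inverse limit; this reduces to the idempotency of $\ordp$ and commutativity of the ordinary Hecke algebra, so that the limiting construction commutes with all $U_w^i$ and descends to a well-defined section of $\pMk^{\ord}(K_1^n,R)$.
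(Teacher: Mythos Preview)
Your first two parts are fine: the containment in the $\fa_{\wt{k},n}$-torsion is exactly how the paper argues it (via \eqref{eq:alg_to_p_adic}), and your injectivity argument, while not part of the paper's proof of this proposition (injectivity of $\bbeta_{\wt{k}}$ is asserted without proof just before \eqref{eq:alg_to_p_adic}), is the standard one.

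For surjectivity your approach differs from the paper's. The paper does not build the weight components by hand. Instead it quotes \cite[Proposition~2.22]{geraghty} as a black box: for each $m$, the mod-$p^m$ approximant $\pf_m\in V^{\ord}_{n_m,m}$ admits a \emph{unique} lift $\pf'_m\in \pMk^{\ord}(K_1^{n_m},R_m)$. Uniqueness is then the organizing principle. Applying $\lk$ to $[t].\pf'_m$ for $t\in 1+p^n\oo_p$ returns $\wt{k}(t)^{-1}\pf_m(gt)=\pf_m(g)$ by the torsion hypothesis, so by uniqueness $[t].\pf'_m=\pf'_m$, and \cite[Lemma~2.19]{geraghty} forces the level down from $K_1^{n_m}$ to $K_1^n$. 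The same uniqueness shows that the $\pf'_m$ form an inverse system, producing the preimage in $\pMk^{\ord}(K_1^n,R)$.

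Your explicit construction via lowering operators and $U_p^{-1}$ is essentially what lives inside Geraghty's Proposition~2.22, so it is not wrong in principle; but the step you flag as ``the main obstacle''---descent to level $K_1^n$ and compatibility of the inverse limit---is precisely where the paper's uniqueness argument buys something. Rather than checking Hecke compatibilities directly, both issues collapse to the single observation that an ordinary weight-$\wt{k}$ lift of a given trivial-weight form is unique. If you want to keep your approach self-contained, isolate and prove that uniqueness statement first; the level descent and the inverse-system compatibility then follow formally, as in the paper.
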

\begin{proof}
That $\bbeta_{\wt{k}}(\pMk^{\ord}(K^n_1,R))\subset V_p^{\ord}(G,K)[\mathfrak{a}_{\underline{k},n}]$
follows from the equivariance of the Hecke operators.
On the other hand, suppose $\varprojlim_m\pf_m\in V_p^{\ord}(G,K)$,
where $\pf_m\in V_{n_m,m}$ for $n_m$ depending on $m$.
By \cite[Proposition 2.22]{geraghty}
each $\pf_m$ has a unique lift $\pf'_m\in \pMk^{\ord}(K_1^{n_m},R_m)$.
Suppose $\lim_m\pf_m\in V_p(G_1,K)[\fa_{\wt{k},n}]$,
then by definition
\[
   \lk(\pf'_m(g))=\pf_m(g)=\wt{k}(u,u)^{-1}\pf_m(gu)=\lk(\rho_{\wt{k}}(u\id_2)\pf'_m(gu)),\quad
   u\in T(p^n).
\]
Therefore $\pf'_m=[u].\pf'_m$ by the uniqueness
of $\pf'_m\in \pMk^{\ord}(K^{n}_1,R_m)$ from \cite[Lemma 2.19]{geraghty}.
($\pf'_m$ is of level $U(\fl^{n,n'})$ in the notation of loc.cit).
Then $\varprojlim_m\pf'_m$ is an inverse system which gives the preimage.
\end{proof}

\subsection{Pairings}\label{sec:pairing}

Recall that $\rho^{\wt{k}}(g)=\rho_{\wt{k}}(g^{-\intercal})$
defines an algebraic representation of highest weight $\wt{k}$.
In the case when $\wt{k}=(k, 0)$,
there exists a non-degenerate pairing
\[
    \langle\cdot,\cdot\rangle_k\colon 
    \Lk\times \Lk\to R[\frac{1}{k!}] \text{ such that }
    \langle v,v'\rangle_k=(-1)^k\langle v',v\rangle_k \text{ and }
    \langle \rho^{\wt{k}}(g)v,v'\rangle=
    \langle v,\rho^{\wt{k}}(g')v'\rangle,
\]
where $g'=\smat{d&-b\\-c&a}$
if $g=\smat{a&b\\c&d}$ and $k!=\prod_\sigma (k_\sigma)!$.
An explicit description can be found for example in \cite[\S 3]{Hsieh21},
where $\rho^{\wt{k}_\sigma}$ is realized on 
the space of homogeneous polynomials of degree $k_\sigma$ and 
\[
    \langle X^iY^{k_\sigma-i}, X^jY^{k_\sigma-j}\rangle_{k_\sigma}
    =(-1)^i\delta_{i,k_\sigma-j}\binom{k_\sigma}{i}^{-1}.
\]
Under the realization above we can identify 
a lowest weight vector $\vk$ of $\rho_{\wt{k}}$ with 
$(X^{k_\sigma})_{\sigma\in\Sigma}$. 
Then the twisted pairing
$(v,v')_k\coloneqq \langle\rho^{\wt{k}}(\smat{&-1\\1&})v,v'\rangle_k$
satisfies
\begin{equation}\label{eq:pairing_transpose}
    (\rho^{\wt{k}}(g)v,v')_k=(v,\rho^{\wt{k}}(g^\intercal)v')_k,\quad
    (\vk,\vk)_{k}=1.
\end{equation}

For $v\mid p\fs$ let $\tau_v\in G_1(\F_v)$ 
be as in Proposition \ref{prop:Rallis_local}.
We then define 
\[
\tau(\fs)=\prod_{\substack{v=w\bw\\w\mid \mathfrak{f}}}
\tau_v^{\val_w(\fs)}  \quad
\tau_n=\prod_{\substack{v=w\bw\\w\in\Sigma_p }}
\tau_v^{\val_w(p^n)}
\]
and put $\tau_n(\fs)=\tau(\fs)\tau_n$.
We note that $\tau_n(\fs)^{-1}\bar{u}\tau_n(\fs)\in K^n_1(\fn)$
if $u\in K^n_1(\fn)$.

Write $K=K(\fn)$ and let $\alpha\colon \Lambda^+\to R^\times$ 
be a homomorphism of $\oo$-algebras
that factors through the ideal $\fa_{\wt{k},n}$
introduced in Proposition \ref{prop:control} for some $\wt{k}$ and $n$.
We then define 
\[
\pMk(K^n_1,\alpha,R)=\{\pf\in \pMk(K^n_1,R)\mid 
\langle u\rangle \pf=\alpha(u)\pf \text{ for } u\in \Lambda^+\}.
\]
\begin{defn}
For a homomorphism $\alpha$ as above, we define a pairing on 
$\pMk(K^n_1,\alpha,R)$ by
\begin{equation}\label{def:pairings}
    (\pf_1,\pf_2)_n\coloneqq
    \wt{k}(p^n,1)\sum_{C(K^n_0(\fn))}
    (\pf_1(g),
    \rho_{\wt{k}}(\tau_n)\pf_2
    (\bar{g}\tau_n(\fs)))_k\in R[\frac{1}{k!}]
\end{equation}
for $\pf_1,\pf_2\in\pMk(K^n_1,\alpha,R)$.
To check that it is well-defined, 
observe that for $u\in K^n_1$ and
$u'=\tau_n(\fs)^{-1}\bar{u}\tau_n(\fs)$
\begin{multline*}
    (\pf_1(gu),
    \rho_{\wt{k}}(\tau_n)\pf_2
    (\overline{gu}\tau_n(\fs)))_k=
    (\rho_{\wt{k}}(u^{-1})\pf_1(g),
    \rho_{\wt{k}}(\tau_n)\rho_{\wt{k}}(u'^{-1})\pf_2
    (\bar{g}\tau_n(\fs)))_k\\=
    (\pf_1(g),\rho_{\wt{k}}(u^{-\intercal}_\Sigma)
    \rho_{\wt{k}}(u^{\intercal}_\Sigma)\rho_{\wt{k}}(\tau_n)\pf_2
    (\bar{g}\tau_n(\fs)))_k
    =(\pf_1(g),
    \rho_{\wt{k}}(\tau_n)\pf_2
    (\bar{g}\tau_n(\fs)))_k.
\end{multline*}
And the summand is clearly invariant by the central elements
in $K^1_0(\fn)$ by the definition of 
$M_{\wt{k}}(K^n_1,\alpha,R)$.
\end{defn}

\begin{lem}
    Suppose $n\geq\val_p(k!)$ and $\pf_1,\pf_2\in \pMk(K^n_1,\alpha,R)$,
    then $(\pf_1, \pf_2)_{2n}\in R$ when we view $\pf_i$ as elements in
    $\pMk(K^{2n}_1,\alpha,R)$ and 
    \[
        (\pf_1, \pf_2)_{2n}\equiv
        (\lk(\pf_1), \lk(\pf_2))_{2n}\coloneqq
        \sum_{K^{2n}_0(\fn)}
        \lk(\pf_1)(g)\cdot 
        \lk(\pf_2)(\bar{g}\tau_{2n}(\fn))\mod p^n.
    \]
\end{lem}
\begin{proof}
    Let $\{v_{{\wt{k}'}}\}_{{\wt{k}'}\geq -\wt{k}}$ be an $\oo$-basis 
    of $L_{\wt{k}}$ extending $\vk$ that consists of weight vectors.
    Decompose  
    $\pf_i(g)=\sum_{{\wt{k}'}\geq -\wt{k}}\pf_i(g;{\wt{k}'})v_{{\wt{k}'}}$
    with respect to the basis, then
    \[
        \wt{k}(p^{2n},1)\cdot 
        \rho_{\wt{k}}(\tau_{2n})\pf_2
        (\bar{g}\tau_{2n}(\fn))
        =\sum_{{\wt{k}'}\geq -\wt{k}}
        (\wt{k}'+\wt{k})(p^{2n},1)\cdot 
        \pf_2(g\tau_{2n}(\fn);{\wt{k}'})v_{{\wt{k}'}}.
    \]
    Since $\ord_p((\wt{k}'+\wt{k})(p^{2n},1))\geq 2n$
    if $\wt{k}'\neq -\wt{k}$ and by assumption
    $p^{2n}(v,v')_k\in p^n\oo$ for $v,v'\in L_k$,
    we see that 
    \[
        \wt{k}(p^{2n},1)\cdot 
        (\pf_1(g), \rho_{\wt{k}}(\tau_{2n})\pf_2(\bar{g}\tau_{2n}(\fn)))_k\equiv
        \pf_1(g;-\wt{k})\cdot \pf_2(\bar{g}\tau_{2n}(\fn);-\wt{k})=
        \lk(\pf_1)(g)\cdot \lk(\pf_2)(\bar{g}\tau_{2n}(\fn))\mod p^n
    \]
    and the lemma follow.
\end{proof}

We now show that after twisted by Hecke actions,
the pairing \eqref{def:pairings}
actually takes values in $R$ when restricted to
the subspace of ordinary forms. 
In particular, when $R=\oo$ we can define a pairing on
\[
    V_p^{\ord}(G_1,K,\alpha)=
    \{f\in V_p^{\ord}(G_1,K,\alpha)\mid 
    \langle u\rangle f=\alpha(u)f \text{ for } u\in \Lambda^+\}.
\]
To simplify the notation, we introduce the Hecke operator
\begin{equation}\label{eq:Up}
    U_{\wt{k},p}\pf(g)=\wt{k}(p,1)\sum_{b\in \oo_p/(p)}
    \rho_{\wt{k}}(\smat{p&b\\&1})
    \pf(g \smat{p &b\\&1}),
\end{equation}
in which the matrices are identified with elements in $K$
via the isomorphism $\iota_\Sigma\colon K_p\cong \GL_2(\oo_p(\Sigma))$.
It can be checked directly that 
$U_{\wt{k},p}=\prod_{w\in\Sigma_p}(U_{w,\wt{k}}^{(1)})^{\ord_w(p)}$.
And we write $U_p=U_{\wt{k},p}$ 
when $\rho_{\wt{k}}$ is the trivial representation.

\begin{prop}\label{prop:product}
    For $\alpha\colon \Lambda^+\to R$ as above and 
    $\pf_1,\pf_2\in\pMk^{\ord}(K^n_1,\alpha,R)$ the pairing 
    \[
       (\pf_1,U_{\wt{k},p}^{-n}\pf_2)_n
    \]
    takes values in $R$ and is independent of $n$.
    In particular, when $R=\oo$ the pairing extends to 
    a non-degenerate symmetric pairing
    \[
        \B\colon V^{\ord}_p(G_1,K,\alpha)\times 
        V^{\ord}_p(G_1,K,\alpha)\to\oo
    \]
    such that  $\B(\bbeta_{\wt{k}}(\pf_1),\bbeta_{\wt{k}}(\pf_2))=
    (\pf_1,U_{\wt{k},p}^{-n}\pf_2)_n$.
\end{prop}
\begin{proof}
We first show that $(\pf_1,U_{\wt{k},p}^{-n}\pf_2)_n$ is independent of $n$ for 
$\pf_1,\pf_2\in M_{\wt{k}}^{\ord}(K^n_1,\alpha,R)$.
Since $K^n_0=\bigsqcup_{b\in \oo_p/(p)} \smat{1&\\p^nb&1}K^{n+1}_0$, 
\begin{multline*}
        (\pf_1,\pf_2)_{n+1}\coloneqq
        \wt{k}(p^{n+1},1)
        \sum_{C(K^n_0)}\sum_{b\in \oo_p/(p)}
        (\pf_1(g\smat{1&\\p^nb&1}),
        \rho_{\wt{k}}(\tau_{n+1})
        \pf_2(\bar{g}\smat{1&-p^nb\\&1}\tau_{n+1}(\fs)))_k\\=
        \wt{k}(p^{n+1},1)
        \sum_{C(K^n_0)}\sum_{b\in \oo_p/(p)}
        (\pf_1(g),
        \rho_{\wt{k}}(\smat{1&-p^nb\\&1})
        \rho_{\wt{k}}(\tau_{n+1}))
        \pf_2(\bar{g}\smat{1&-p^nb\\&1}\tau_{n+1}(\fs)))_k\\=
        \wt{k}(p^n,1)
        \sum_{C(K^n_0)}
        (\pf_1(g),
        \rho_{\wt{k}}(\tau_{n})
        \wt{k}(p,1)
        \sum_{b\in \oo_p/(p)}
        \rho_{\wt{k}}(\smat{p&-b\\&1})
        \pf_2(\bar{g}\tau_n(\fs)\smat{p&-b\\&1}))_k\\=
          \wt{k}(p^n,1)
          \sum_{C(K^n_0)}
         (\pf_1(g),
        \rho_{\wt{k}}(\tau_{n})
          U_p\pf_2(\bar{g}\tau_n(\fs)))_k=
          (\pf_1,U_p\pf_2)_n.
\end{multline*}
Therefore, suppose $\ppf_1,\ppf_2\in V_p^{\ord}(G,K,\alpha)$
are given by the inverse limits
$\varprojlim_m\pf_{m,i}$ where 
where $\pf_{m,i}\in M^{\ord}(K^m_1,\alpha,\oo_m)$, we then define
\[
    \B(f_1,f_2)=\varprojlim_m
    (\pf_{1,m}, U_p^{-m}\pf_{2,m})_{m}\in \oo
\]
And when $\ppf_i=\bbeta_{\wt{k}}(\pf_i)$ for
$\pf_i\in M^{\ord}_{\wt{k}}(K^n_1,\alpha,\oo)$
we may take $\pf_{i,m}=\lk(\pf_i)\bmod p^m$, 
then by the previous lemma
\[
    (\pf_1,U_{\wt{k},p}^{-n}\pf_2)_{n}=
    (\pf_1,U_{\wt{k},p}^{-m}\pf_2)_{m}\equiv (\pf_{1,m},U_p^{-m}\pf_{2,m})_{m}
    \mod p^m
\]
for all sufficiently large $m$ and therefore 
$\B(\bbeta_{\wt{k}}(\pf_1),\bbeta_{\wt{k}}(\pf_2))
=(\pf_1, U_{\wt{k},p}^{-n}\pf_2)_{n}$.

To show that $\B$ is symmetric and non-degenerate,
it is sufficient to show that the pairing is symmetric and perfect
on the finite free $\oo_m$-module $M(K^n_1,\alpha,\oo_m)$.
We first observe that the untwisted pairing \eqref{def:pairings}
\[
    (\pf_{1,m},\pf_{2,m})_n=
    \sum_{C(K^n_0(\fn))}
    \pf_{1,m}(g)\pf_{2,m}(\bar{g}\tau_n(\fs))
\]
is obviously a perfect pairing and
is indeed symmetric since $g\to \bar{g}\tau_n(\fs)$ is an involution.
\[
    (\pf_{1,m},U_p^{-n}\pf_{2,m})_n=
    (U_p^{-n}\pf_{2,m},U_p^{-n}\pf_{1,m})_{2n}=
    (\pf_{2,m},U_{\wt{k},p}^{-n}\pf_{1,m})_n \mod p^n
\]
that the twisted pairing is symmetric. 
To show that it is perfect, observe that the equation above implies
$(\pf_{1,m},\ordp\pf_{2,m})_n=(\ordp\pf_{1,m},\pf_{2,m})_n$
for $\pf_{i,m}\in M(K_1^n,\alpha, \oo_m)$.
Thus given $\pf'_{2,m}\in M(K^n_1,\alpha,\oo_m)$,
if we put $\pf_{2,m}=U_p^n\ordp \pf'_{2,m}\in 
M^{\ord}(K^n_1,\alpha,\oo_m)$, 
then for any $\pf_{1,m}\in M^{\ord}(K^n_1,\alpha,\oo_m)$ we have
\[
    (\pf_{1,m},\pf'_{2,m})_n=(\ordp\pf_{1,m},\pf'_{2,m})_n=
    (\pf_{1,m},\ordp\pf'_{2,m})_n=
    (\pf_{1,m},U_p^{-n}\pf_{2,m})_n.
\]
Therefore $(*,\pf'_{2,m})_n$ and $(*,U_p^{-n}\pf_{2,m})_n$ 
induces the same map on $M^{\ord}(K^n_1,\alpha,\oo)$.
Therefore the twisted pairing is indeed perfect,
from which we can conclude that the pairing $\B$ is perfect as well.
\end{proof}

\subsection{Hida families}

\begin{defn}\label{hidaform}
Let $\I$ be a local complete Noetherian ring
that is finite over $\Lambda^+$ and flat over $\oo$.
We define the space of $\I$-adic Hida families of level $K$ by
\begin{equation}\label{eq:Hida_family}
    M^{\ord}(K,\I)=
    \{\euF\in V_p^{\ord}(G_1,K)\widehat{\otimes}_\oo \I\mid 
    (\langle u\rangle\otimes\id)\euF=
    (\id\otimes\langle u\rangle)\euF\, \text{ for }
    u\in \Lambda^+\}.
\end{equation}
\end{defn}

\begin{prop}\label{Hidafam}
    There exists an isomorphism  of $\I$-modules
    \[
        M^{\ord}(K,\I)\cong 
        \Hom_{\Lambda^+}(e\oo\llbracket C(K)\rrbracket, \I).
    \]
\end{prop}
\begin{proof}
By the isomorphism \eqref{eq:MM} we have
$V_p^{\ord}(G_1,K)\widehat{\otimes}_\oo\I
\cong \Hom^{\cts}_\oo(e\oo\llbracket C(K)\rrbracket,\I)$.
Then the subspace on which the actions
$(\langle u\rangle\otimes\id)$ and
$(\id\otimes\langle u\rangle)$
coincide is precisely
$\Hom^{\cts}_{\Lambda^+}(e\oo\llbracket C(K)\rrbracket,\I)$.
\end{proof}

For $n\geq 1$,
note that the ideal $(p^n,\fa_{\wt{k}, n})$ is independent of $\wt{k}$
and let $\alpha_n\colon\Lambda^+\to R_n\coloneqq\I/(p^n,\fa_{\wt{k},n})\I$
be the canonical projection.
If $\euF\in M^{\ord}(K,\I)
\cong \Hom^{\cts}_\oo(e\oo\llbracket C(K)\rrbracket,\I)$,
we let $\euF_n\in \Hom_{\Lambda^+}(e\oo\llbracket C(K)\rrbracket,R_n)$
denote the composition with the projection.
By the control theorem we may identify $\euF_n$
as an algebraic modular form in 
\[
    M(K^n_1, R_n,\alpha_n).
\]
By abuse of notation, we now define a pairing 
$\B\colon M^{\ord}(K,\I)\times M^{\ord}(K,\I)\to \I$ by 
\begin{equation}\label{def:Hida_pairing}
    \B(\euF_1,\euF_2)=\varprojlim_n
    (\euF_{1,n},U_p^{-n}\euF_{2,n})_n\in\varprojlim_n R_n\cong \I.
\end{equation}

\begin{prop}\label{prop:Hida_pairing}
    Let $\alpha\colon \I\to \oo$ be a homomorphism
    of $\Lambda^+$-algebras such that 
    $\alpha\vert_{\Lambda^+}$ factors through $\fa_{\wt{k},n}$
    for some $\wt{k}$ and $n$.
    If $\euF\in M^{\ord}(K,\I)$, let 
    $\euF_\alpha\in \Hom_\oo^{\cts}(e\oo\llbracket C(K)\rrbracket,\oo)$
    denote the composition, which we identify with an element in 
    $V_p(G_1,K,\alpha)$. Then for $\euF_1,\euF_2\in M^{\ord}(K,\I)$ we have
    \[
    \alpha(\B(\euF_1,\euF_2))=\B(\euF_{1,\alpha},\euF_{2,\alpha}).
    \]
\end{prop}
\begin{proof}
Write $\ppf_i=\euF_{i,\alpha}$ for $i=1,2$, then we may take
$\ppf_i=\varprojlim_m \pf_{i,m}$ for 
\[
    \pf_{i,m}=\alpha\circ \euF_i \bmod p^m=
    \alpha\circ \euF_{i,m}
\]
when $m\geq n$. 
Then the proposition follows directly from the definition.
\end{proof}

\subsubsection{Between different tame levels}

Given $\fn=\fc\fs\dd_\F$ and $K(\fn)$, let $\ell\in\finite$ 
be a finite place that splits into $\ell=\fl\bar{\fl}$ and is 
prime to $p\fn$. Assume further that $K_\ell^0\subset K(\fn)$
and let $K(\fn\ell)\subset K(\fn)$ 
be the subgroup obtained by replacing the component at $\ell$
by the subgroup $K_\ell$ as in \ref{enu:K2}. Let 
\[
    \phi\colon \Lambda^+_{\fs\ell}\to \Lambda^+_{\fs}
\]
be the canonical projection between the $\Lambda$-algebras
defined as in \eqref{def:lambda}.
View $M_{\wt{k}}(K^n(\fn),R)$ as an $\Lambda_{\fs\ell}$-module via $\phi$,
we can define the $\Lambda_{\fs\ell}$-module homomorphisms 
\[
    \id, V_\fl\colon 
    M_{\wt{k}}(K^n(\fn),R)\to M_{\wt{k}}(K^n(\fn\ell),R).
\]
Here $\id$ is the usual inclusion and 
$(V_\fl \pf)(g)\coloneqq f(g\smat{1&\\&\varpi_\ell})$,
with the matrix identified as an element in $G_1$ via $\iota_\fl$.
If $\I$ is a finite $\Lambda^+_\fs$-algebra that is flat over $\oo$,
viewed also as an $\Lambda^+_{\fs\ell}$-algebra via $\phi$,
the homomorphisms extends naturally to the spaces of Hida families
\[
    \id, V_\fl\colon 
    M^{\ord}(K^n(\fn),\I)\to M^{\ord}(K^n(\fn\ell),\I).
\]

\begin{prop}\label{prop:pair_at_deff_level}
	Let $\B_{\fn}$ and $\B_{\fn\ell}$ denote respectively 
    the $\I$-valued pairings defined by \eqref{def:Hida_pairing}
    on the spaces
    $M^{\ord}(K^n(\fn),\I)$ and $M^{\ord}(K^n(\fn\ell),\I)$, 
    then we have
	\begin{align*}
	&\B_{\fn\ell}(\euF_1,\euF_2)=
	\B_{\fn}(\euF_1,T_{\fl}^{(1)}\cdot \euF_2)\\
	&\B_{\fn\ell}(\euF_1,V_{\fl}\cdot\euF_2)=
	(q_\ell+1) \B_{\fn}(\euF_1,
	T_{\fl}^{(2)}\cdot\euF_2)\\
	&\B_{\fn\ell}
	(V_{\fl}\cdot \euF_1,V_{\fl}\cdot\euF_2)=
	\B_{\fn} (T_{\fl}^{(1)}\cdot\euF_1,
	T_{\fl}^{(2)}\cdot \euF_2)
	\end{align*}
\end{prop}
\begin{proof}
    It suffices to check the equalities over finite levels
    and with trivial weight.
    Let $K_0(\ell)=\{k\in\GL_2(\oo_\ell)\mid 
    k\bmod \varpi_\ell\in B_2(\oo_\ell)\}$
    Pick $\smat{&1\\1&}, \smat{1&\\b&1}, b\in \oo_\ell/(\varpi_\ell)$
    as a set of representatives for $\GL_2(\oo_\ell)/K_0(\ell)$.
    Identify $k$ with elements in $G_1(\F_\ell)$ via $\iota_\fl$,
    then $\iota_\fl(\bar{k})$ goes through 
    \[
        \smat{&1\\1&}, \smat{1&b\\&1}, b\in \oo_\ell/(\varpi_\ell).
    \]
    Let $f_1,f_2\in M^{\ord}(K^n_1(\fn),R)$, then the first equality
    follows from 
	\begin{multline*}
    \sum_{g\in C(K^n_0(\fn\ell))}
	f_1(g)f_2(\bar{g}\tau_n(\fs))=
	\sum_{g\in C(K^n_0(\fn\ell))}
	\sum_{k\in \GL_2(\oo_\ell)/K_0(\oo_\ell)}
	f_1(gk)f_2(\bar{g}\bar{k}\tau_n(\fs\ell))\\=
	\sum_{g} f_1(g)\cdot 
    \left[
    \sum_{b\in\oo_\ell/(\varpi_\ell)}
	f_2(\bar{g}\smat{\varpi_\ell&b\\&1}\tau_n(\fs))+
	f_2(\bar{g}\smat{1&\\&\varpi_\ell}\tau_n(\fs)\smat{&1\\1&})
    \right]=
	\sum_{g\in C(K^n_0(\fn))}f_1(g)
	(T_\fl^{(1)}f_2)(\bar{g}\tau_n(\fs)).
	\end{multline*}
    Similarly, the second equality follows from 
	\begin{multline*}
	\sum_{g\in C(K^n_0(\fn\ell))}
	f_1(g)(V_\fl f_2)(\bar{g}\tau_n(\fs\ell))=
	\sum_{g\in C(K^n_0(\fn))}
	\sum_{k\in \GL_2(\oo_\ell)/K_0(\ell)}
	f_1(gk)(V_\fl f_2)(\bar{g}\bar{k}\tau_n(\fs\ell))\\=
	\sum_{g} f_1(g)\sum_{k}
	f_2(\bar{g}\bar{k}\smat{\varpi_\ell&\\&\varpi_\ell}\tau_n(\fs))=
	(q_\ell+1)
	\sum_{g\in C(K^n_0(\fn))} f_1(g)
	(T_\fl^{(2)}f_2)(\bar{g}\tau_n(\fs)).
	\end{multline*}
	The last equation also follows similarly
	from the above computation.
\end{proof}

\begin{rem}
    By the first equality in the Proposition above
    and the proof of Proposition \ref{prop:product},
    we see that the pairing $\B$ is Hecke equivariant
    in the sense that 
    \[
         \B(T\cdot \euF_1,\euF_2)
         =\B(\euF_1,T\cdot \euF_2)
    \]
    if $T$ is one of the operators from 
    \eqref{eq:hecke_awayp},
    \eqref{eq:hecke_atp}, or
    \eqref{eq:hecke_ats}.
\end{rem}

\section{Construction of Hida families}

Let $\fs$ be as in \S\ref{sec:choice}.
In particular we have $\fs=\mathfrak{f}\bar{\mathfrak{f}}$ with
$\mathfrak{f}+\bar{\mathfrak{f}}=\OK$.
Our goal in this section is to interpolate the pull-back theta lifts
of $\fs$-admissible Hecke characters $\eta$.
We first introduce the following notation.

Let $Cl(\fa)$ denote the $p$-part of the ray class group over $\K$
of an ideal $\fa\subset\OK$. We then define 
\[
    \fG_{\fs}=\varprojlim_n Cl(p^n\fs),
\]
which is isomorphic to the Galois group over $\K$
of the maximal abelian pro-$p$ extension over $\K$
that is unramified away $p\fs$.
We normalize the reciprocity map $\Art\colon \A_{\K,f}\to \fG_\fs$
so that a uniformizer $\varpi_w$ at $w\nmid p\fs$ is sent to 
the geometric Frobenius $\Fr_w$.
If $\eta$ is admissible, then we may identify 
the p-adic avatar $\hat{\eta}$ with a character of $\fG_\fs$
via the reciprocity map. 
We let $\fX_\fs^+$ denote the set of such $p$-adic characters
that come from an admissible Hecke character,
which is a dense subset of the space of continuous
$\overline{\Z}_p$-valued functions on $\fG_\fs$.

Since $p^n\fs=\overline{p^n\fs}$, the Galois group $\Gal(\K/\F)$
acts on $\fG_\fs$ and we define the subgroups
\[
    \fG^{\pm}_\fs=\{\gamma\in\fG_\fs\mid \gamma^c=\gamma^{\pm}\}
\]
and the quotient $\fG_\fs^a\coloneqq \fG_\fs/\fG_\fs^+$,
which we call the maximal anticyclotomic quotient of $\fG_\fs$.
Since $\Art(\A_{\F,f}^\times)\subset \fG_\fs^+$, 
the group homomorphism 
$\A_{\K,f}^\times\to \fG_\fs\to \fG_\fs^a$ factors through 
$\A_{\K,f}^1$ and fits into a commutative diagram
\begin{equation}\label{eq:classgps}
    \begin{tikzcd}
    \A_{\K,f}^\times\arrow[d,"\Art"]
    \arrow[r,"z\mapsto z/\bar{z}"]&\A_{\K,f}^1\arrow[d,"\Art^a"]
    \arrow[r,hookrightarrow]& \A_{\K,f}^\times\arrow[d,"\Art"]\\ 
    \fG_\fs\arrow[r] & \fG_\fs^a \arrow[r,"\gamma\to \gamma/\gamma^c"]
    & \fG_\fs
    \end{tikzcd}
\end{equation}
which identifies the image of $\fG_\fs^a$ with the subgroup $\fG_\fs^-$.
We note that if $v\in\finite$ splits into $w\bw$
and $h\in \K_v^1$, then by definition
$\Art^a(h)\in\fG_\fs^a$ is equal to the image of $\Art(h_w)\in\fG_\fs$
for $h_w=\iota_w(h)\in K_w$.

\subsection{Family of theta lifts}

For $\fs$ as above, let 
$\eta$ be an $\fs$-admissible Hecke character of infinity type $k\Sigma$
and $\Phi_v$ be as in \S\ref{sec:choice}.
Define $\Phi_f=\otimes_{v\in \finite}\Phi_v$
and $\Phi_f'=\omega^\square(\xi^p,1)\Phi_f$
for $\xi\coloneqq\prod_{v\in\finite\setminus S_p}\xi_v$
from Proposition \ref{prop:period_local}.
Then given $P=\otimes_\sigma P_\sigma\in L_{k\Sigma}\coloneqq\otimes_\sigma L_k$
and $Z=(Z_\sigma)\in\X$ we define
\begin{equation}\label{def:theta_eta}
\theta^\square_{P,Z}(\eta)=\theta^\square_{\Phi_{P,Z}}(\eta)
\text{ for }
\Phi_{P,Z}=\otimes_{\sigma\in\Sigma}
P_\sigma(z)e^{2\pi i\mtr(Z_\sigma z^*z)}\cdot \Phi_f',
\end{equation}
which we extends linearly to all $P\in L_{k\Sigma}$.
We simply write $\theta^\square_{P}(\eta)$ when $Z=\ii$.
We then fix an open compact subgroup $\KK=\KK(\fn)$ as in \S 5
so that $\theta^\square_{P,Z}(\eta)$ is invariant by $\KK^n_1$
for some $n$.

Let $\rho=\rho_{k\Sigma}$ be the tensor product representation
of the representations $\rho_k$ from Definition \ref{def:weight},
which defines a representation of 
$\GL_2(\C(\Sigma))\times \GL_2(\C(\Sigma))$ on $L_{k\Sigma}$.
Then $\rho$ is an algebraic representation of lowest weight
$-\bwt{k}$ where $\bwt{k}=(k\Sigma, 0, \Sigma,\Sigma)$.
We identify the contragradient representation on 
the dual space $L_{k\Sigma}^*$ with $\rho^{\bwt{k}}(\C)$.
Then  when we identify the underlying spaces of 
$\rho_{\bwt{k}}$ and $\rho^{\bwt{k}}$,
the functional $\blk$ from Definition \ref{def:avatar}
is given by evaluating at $P_1(z)=z_1^{k\Sigma}\in L_{k\Sigma}$.

\begin{lem}\label{lem:cycle}
    The theta lift $\theta^\square_{P,Z}(\eta)$
    has the central character $\chi_\circ^2\eta$ and satisfies
    \[
        \theta^\square_{P,Z}(\eta)(gg_\infty)=
        \nu(g_\infty)^{(k+2)\Sigma}
        \theta^\square_{\rho(J(g_\infty, Z))P,g_\infty Z}(\eta)(g)
    \]
    when $g\in \Res_{\F/\Q}\G(\V)(\R)^+$.
\end{lem}
\begin{proof}
    The claim about the central character follows from
    Definition \ref{def:theta_ext}. On the other hand, 
    if $g_\infty\in \Res_{\F/\Q}\G(\R)^+$
    and $(g_\infty,h_0)\in \Res_{\F/\Q}G(\UU(\V)\times \UU(V))(\R)$,
    since $P(h_0^{-1}z)\eta(h_0)=P(z)$,
    by Lemma \ref{lem:arch}, 
\begin{multline*}
    \theta^\square_{P,Z}(\eta)(gg_\infty)=
    |\nu(gg_\infty)|\int_{[H]}
    \sum_{z\in \K^2}
    \omega^\square(gg_\infty, h_0h)\Phi_{P,Z}(z)\eta(hh_0)\,dh\\=
    \nu(g_\infty)^{(k+2)\Sigma}|\nu(g)|\int_{[H]}
    \sum_{z\in \K^2}
    \omega^\square(g, h)\Phi_{J(g_\infty, Z)P,g_\infty Z}(h_0^{-1}z)\eta(hh_0)\,dh
    =\nu(g_\infty)^{(k+2)\Sigma}
    \theta^\square_{J(g_\infty,Z) P,g_\infty Z}(\eta)(g).
\end{multline*}
\end{proof}

\begin{defn}\label{def:algtheta}
    Suppose the Schwartz function $\Phi_f$ associated to $\eta$
    is invariant by $\KK^n_1$,  we define the $L^{\bwt{k}}(\C)$-valued
    automorhpic function $\ff(\eta)(g)$ and 
    $\ff(\eta)(Z,g_f)\in \M_{\bwt{k}}^{\an}(\KK^n_1,\C)$ 
    respectively by 
    \begin{align*}
        &\ff(\eta)(g)\colon 
        \big[P\in L_{k\Sigma}\mapsto (2\pi)^{-\Sigma}
        \theta^\square_{P}(\eta)(g)\big]\\
        &\ff(\eta)(Z,g_f)\colon 
        \big[P\in L_{k\Sigma}\mapsto (2\pi)^{-\Sigma}
        \theta^\square_{P,Z}(\eta)(g_f)\big]
    \end{align*}
    Then $\ff(\eta)(Z,g_f)=
    \nu(g_\infty)^{-(k+2)\Sigma}\rho^{\bwt{k}}(J(g_\infty, \ii))
    \ff(\eta)(g_\infty g_f)$ when $Z=g_\infty \ii$
    by Lemma \ref{lem:cycle}.
    Consequently $\ff(Z,g_f)$ satisfies \eqref{defn:holoform}
    since $\lVert\bwt{k}\rVert=(k+2)\Sigma$.
\end{defn}

\begin{prop}
    The modular form $\ff(\eta)\in \M_{\bwt{k}}(\KK^n_1,\C)$
    associated to $\ff(\eta)(Z,g_f)$ is $p$-integral.
    In other word, after enlarging $\keu$,
    we may assume $\ff(\eta)\in \M_{\bwt{k}}(\KK^n_1,\oeu)$
    for some $\oeu=\oo_{\keu,(\fp)}$.
\end{prop}
\begin{proof}
    For $(a_1,a_2)\in \GL(M^0)$ as in 
    Lemma \ref{lem:Fouerier_compare} we put 
    $A'=(a_2^{-\intercal}, a_1^{-\intercal})$ so that 
    $m(a_1,a_2)=\smat{A'&\\&A'^{-*}}$.
    Let $A_1\in\GL_2(\A_{\K,f}^{(p\fn)})$ and $A=(A_1,A')$.
    By Corollary \ref{cor:Fourier}, for $\beta=z^*z\in\her_2(\F)$
    nonzero the Fourier coefficient $(2\pi)^{-\Sigma}
    W_\beta(\smat{A&\\&A^{-*}}\xi^{-1}, \theta_{P,Z}^\square(\eta))$ 
    is equal to 
    \begin{equation}\label{eq:Fourier}
    C(\K^1) \chi_\circ(\det A)\tilde{\eta}(z'')\cdot P(z)
    \prod_{w\in \Sigma_p}\tilde{\eta}_w(z'_1)
    \cdot \prod_{v\in\finite\text{ split }}
    R_{z',v}(\tilde{\eta}_w^{-1}(\varpi_w))\id_{\Xi}(A^*\beta A).
    \end{equation}
    In other word, $\ff(\eta)(Z,\smat{A&\\&A^{-*}}\xi^{-1})$
    has a Fourier expansion 
    $\sum_{\beta}a_\beta(\eta, \smat{A&\\&A^{-*}}\xi^{-1})q^\beta$,
    where the sum goes through nonzero Hermitian matrices 
    of the form $\beta=z^*z$. And the value of 
    $a_\beta(\eta, \smat{A&\\&A^{-*}}\xi^{-1})\in L^{\bwt{k}}(\C)$
    at $P\in L_{k\Sigma}$ is given by \eqref{eq:Fourier}.
    Since the value is unchanged if we replace 
    $z$ by $hz$ for $h\in\K^1$,
    by the strong approximation, we may assume 
    $z_i\in\oo_w$ for all $w\mid p$. Consequently 
    the value in \eqref{eq:Fourier} is $p$-integral
    for any monomial in $L_{k\Sigma}$,
    and the claim follows from the algebraic $q$-expansion principle.
\end{proof}

By the proposition, we can apply Definition \ref{def:avatar}
and define the p-adic modular form 
$\beta_{\bwt{k}}(\ff(\eta))\in V_p(\G,\KK)$ associated to $\ff(\eta)$.
Compare \eqref{eq:Fourier} 
with Lemma \ref{lem:Fouerier_compare} and
recall that $\blk$ is given by evaluating at $P_1(z)=z_1^{k\Sigma}$.
Since $\rho(a_2,a_1)P_1(z)=\det a_1^{-1}P_1(za_2^{-\intercal})$
and $z_{1,\Sigma_p}'\coloneqq (zA)_{1,\Sigma_p}
=z_{1,\Sigma_p}a_2^{-\intercal}$,
the Fourier expansion of $\beta_{\bwt{k}}(\ff(\eta))\in V_p(\G,\KK)$
is given by 
\begin{equation}\label{eq:padic_Fourier}
    C(\K^1) \chi_\circ(\det A)\det a_1^{-\Sigma}\tilde{\eta}(z'')
    \cdot (z_{1,\Sigma_p}')^{k\Sigma}
    \prod_{w\in \Sigma_p}\tilde{\eta}_w(z'_1)
    \cdot \prod_{v\in\finite\text{ split }}
    R_{z',v}(\tilde{\eta}_w^{-1}(\varpi_w))\id_{\Xi}(A^*\beta A).
\end{equation}

\begin{thm}
    There exists a measure of $\fG_\fs$ valued
    in $V_p(\G,\KK(\fn))$ such that 
    for any $\fs$-admissible Hecke character $\eta$
    of infinity type $k\Sigma$ for $k\geq 0$
    \[
        \int_{\fG_\fs}\hat{\eta}d\euF_{\fs}=\bbeta_{\bwt{k}}(\ff(\eta)).
    \]
\end{thm}
\begin{proof}
    Recall from Corollary \ref{cor:Fourier} that 
    the terms $z''$ and $R_{z',v}(\tilde{\eta}^{-1}_w(\varpi_w))$
    in \eqref{eq:padic_Fourier} only involves terms away $p$.
    Therefore, since $\eta$ has the infinity type $k\Sigma$, 
    the expression \eqref{eq:padic_Fourier} can be written as 
    \[
        \sum_i b_i\cdot \hat{\eta}(c_i).
    \]
    for some $b_i\in \oeu$ and $c_i\in \A_{\K,f}^\times$.
    Then as the proof of \cite[Theorem 5.8]{Hsieh2014},
    the existence of the measure follows from 
    the abstract Kummer congruences and the 
    $q$-expansion principle of $p$-adic modular forms.
\end{proof}


\subsection{Family of pull-back theta lifts}

Define the pull-back of \eqref{def:theta_eta} with respect to 
$\bnu=\{\chi^{-1},\chi^{-1}\}$ by
\begin{equation*}
    \theta^\square_{P,Z}(\eta,\bnu)(g)=\int_{[T(\A_\K)]}
    \theta^\square_{P,Z}(\eta)(\iota(g,\bbeta)\Upsilon)
    \chi^{-1}(\beta_1\beta_2)\,d\beta_1d\beta_2,\quad
    \bbeta=\smat{\beta_1&\\&\beta_2}.
\end{equation*}
We fix an open compact subgroup $K=K(\fn)$ as in \S6
so that $\theta^\square_{P,Z}(\eta,\bnu)$
is invariant by $K^n_1$ for some $n$.
Note that while $\chi^{-1}(\beta_i)=\chi_\circ^{-1}(\beta_i)=\beta_i$
for $\beta_i\in\K_\infty^1$, 
apply \eqref{eq:cocycle} to Lemma \ref{lem:cycle} shows that 
\[
    \theta^\square_{P,Z_\delta}(\eta)(\iota(\id_2,\bbeta)\Upsilon)=
    \theta^\square_{J(\id_2,\beta)P,Z_\delta}(\eta)(\Upsilon)=
    \beta_1^{-1}\beta_2^{-1}\cdot
    \theta^\square_{P,Z_\delta}(\eta)(\Upsilon)
\]
and therefor the integral is $\K_\infty^1$-invariant.
On the other hand,
pick for each $v\in\finite$ an open compact subgroup $U_v\subset K_v^1$ 
such that $\chi_v(\alpha)=1$ if $\alpha\in U_v$ and
\begin{equation}\label{def:smallU}
    \theta^\square_{P,Z}(\eta)
    (\iota(\smat{1&\\&\alpha_2}, \smat{\beta_1&\\&\beta_2})\Upsilon)=
    \theta^\square_{P,Z}(\eta)(\Upsilon)
    \text{ when } \alpha_2, \beta_1,\beta_2\in U_v.
\end{equation}
By Proposition \ref{prop:xi_split}
the definition is independent of $\eta$ or $\fs$.
Furthermore by Proposition \ref{prop:non_split_inv} and 
Proposition \ref{prop:inv_Up}
we can take $U_v$ to be the maximal open compact subgroup,
so $U(\fc)\coloneqq\prod_v U_v$ is an open compact subgroup
of $\A_{\K,f}^1$.
Combined with the discussion above, when $Z=Z_\delta$
the integral in $\theta^\square_{P,Z}(\eta,\bnu)$
is invariant by $\beta_i\in \K_\infty^1\times U(\fc)$
can be rewritten as
\begin{equation}\label{def:pullback_eta}
    \theta^\square_{P,Z}(\eta,\bnu)(g)=
    \vol(\K_\infty^1\times U(\fc))^2
    \sum_{\beta_i\in\K^1\backslash\A_{\K,f}^1/U(\fc)}
    \theta^\square_{P,Z_\delta}(\eta)(\iota(g,\bbeta)\Upsilon)
    \chi_\circ^{-1}(\beta_1\beta_2).
\end{equation}

\begin{defn}\label{def:algpullback}
    We define the pull-back of $\ff(\eta)(Z,g_f)$
    from Definition \ref{def:algtheta} as 
    the $L^{\bwt{k}}(\C)$-valued function on $g_f\in\UU(\W)(\A_{\F,f})$
    given by
    \[
        \ff^\circ(\eta)(g_f)=
        \sum_{\beta_i\in\K^1\backslash\A_{\K,f}^1/U(\fc)}
        \ff(\eta)(Z_\delta, \iota(g_f,\bbeta)\Upsilon)
        \chi_\circ^{-1}(\beta_1\beta_2).
    \]
\end{defn}
Let $(\alpha,\beta)\in \UU(\W)\times \UU(-\W)(\F)$, 
then by \eqref{eq:cocycle} we see that
\begin{equation}\label{eq:pullbackcocycle}
    \ff(\eta)(Z_\delta,
    \iota(\alpha,\beta)\iota(g_f,\bbeta)\Upsilon)=
    \rho^{\bwt{k}}(J(\iota(\alpha,\beta),Z_\delta))
    \ff(\eta)(Z_\delta,\iota(g_f,\bbeta)\Upsilon)=
    \rho^{\bwt{k}}(\alpha^{-\intercal},\beta)
    \ff(\eta)(Z_\delta,\iota(g_f,\bbeta)\Upsilon)
\end{equation}

\begin{prop}\label{prop:alg_padic}
    Let $\underline{B}=\underline{B}_{g_f}$ be the quintuple associated 
    to $g_f\in\UU(\W)(\A_{\F,f})$ and define
    \[
        f(\eta)(g_f)=f(\eta)(\underline{B})=
        \sum_{\beta_i\in\K^1\backslash\A_{\K,f}^1/U(\fc)}
        \hat{\ff}(\eta)(\underline{B},\underline{B}_{\bbeta})
        \widehat{\chi}_\circ^{-1}(\beta_1\beta_2),
    \]
    then the values $\ff^\circ(\eta)(g_f)$ and $f(\eta)(\underline{B})$
    are related by 
    \begin{equation}\label{eq:alg_padic}
         \left(\frac{2\pi i}{\Omega_\infty}\right)^{(k+2)\Sigma}
          \ff^\circ(\eta)(g_f)=
          \frac{1}{\Omega_p^{(k+2)\Sigma}}\cdot
        \rho_{\bwt{k}}(g_{\Sigma_p},\id_2)
        f(\eta)(\underline{B}).
    \end{equation}
    In particular, $f(\eta)(g_f)$ is an algebraic modular form
    of weight $\wt{k}\coloneqq(k\Sigma, 0)$
    in the sense of Definition \ref{def:algform}.
\end{prop}
\begin{proof}
    Since \eqref{eq:pullbackcocycle} implies that 
    $\ff(\eta)(Z_\delta,\iota(g_f,\bbeta)\Upsilon)$
    is a modular form of weight $(k\Sigma,0)$ and $(\Sigma,\Sigma)$
    in the first and the second variable respectively,
    by \eqref{eq:pullback} and \eqref{eq:padic_pullback} we have
    \begin{multline*}
        \left(\frac{2\pi i}{\Omega_\infty}\right)^{(k+2)\Sigma}
        \ff(\eta)(Z_\delta, \iota(g_f, \bbeta)\Upsilon)=
        \left(\frac{2\pi i}{\Omega_\infty}\right)^{(k+2)\Sigma}
        \ff(\eta)(\underline{B}\times\underline{B}_{\bbeta}
        ,2\pi i d\underline{z}_{\W}\times 2\pi i d\underline{z}_{\W})\\=
        \ff(\eta)(\underline{B}\times\underline{B}_{\bbeta}
        ,\ww_{B}\times \ww_{B_{\bbeta}})=
        \frac{1}{\Omega_p^{(k+2)\Sigma}}
        \ff(\eta)(\underline{B}\times\underline{B}_{\bbeta}
        ,d^\times t\times d^\times t)=
        \frac{1}{\Omega_p^{(k+2)\Sigma}}
        \rho_{\bwt{k}}(g_{\Sigma_p}, \bbeta_{\Sigma_p^c})
        \hat{\ff}(\eta)(\underline{B}\times\underline{B}_{\bbeta}).
    \end{multline*}
    Here the last equality follows from 
    $\ww(j_B)=g_{\Sigma_p}^{-1}d^\times t$ and 
    $\ww(j_{B_{\bbeta}})=\bbeta_{\Sigma_p^c}^{-1}d^\times t$.
    The difference between $\Sigma_p$ and $\Sigma_p^c$ 
    reflects the different choice of polarizations
    $L^0=e^-L_p$ and $(-L)^0=e^+(-L)_p$.
    Then the quality follows from 
    $\rho_{\bwt{k}}(\id_2,\bbeta_{\Sigma_p^c})=
    \beta_{1,\Sigma_p^c}^{-1}\beta_{2,\Sigma_p^c}^{-1}$ and
    $\beta_{1,\Sigma_p^c}^{-1}\chi_\circ^{-1}(\beta_i)=
    \widehat{\chi}_\circ^{-1}(\beta_i)$.

    Let $\alpha\in \UU(\W)(\F)$ and $u\in K^n_1$.
    Since $\ff^\circ(\eta)(\alpha g_f)=
    \rho^{\bwt{k}}(\alpha^{-\intercal},\id_2)\ff^\circ(\eta)(g_f)=
    \rho_{\bwt{k}}(\alpha,\id_2)\ff^\circ(\eta)(g_f)$
    by \eqref{eq:pullbackcocycle}, we see that
    \[
         \left(\frac{2\pi i}{\Omega_\infty}\right)^{(k+2)\Sigma}
         \rho_{\bwt{k}}(\alpha,\id_2)
          \ff^\circ(\eta)(g_f)=
         \left(\frac{2\pi i}{\Omega_\infty}\right)^{(k+2)\Sigma}
          \ff^\circ(\eta)(\alpha g_fu)=
          \frac{1}{\Omega_p^{(k+2)\Sigma}}\cdot
        \rho_{\bwt{k}}((\alpha g_fu)_{\Sigma_p},\id_2)
        f(\eta)(\alpha g_fu)
    \]
    and therefore
    $f(\eta)(\alpha g_f u)=\rho_{\bwt{k}}(u_{\Sigma_p},\id_2)
    f(\eta)(g_f)$ as desired.
\end{proof}

\begin{prop}\label{prop:eigenvalue}
    The algebraic modular form $f(\eta)$ is an ordinary eigenform
    for the Hecke operators defined from 
    \eqref{eq:hecke_awayp},
    \eqref{eq:hecke_atp}, and \eqref{eq:hecke_ats}.
    Precisely, write $\tilde{\eta}^\wedge$ for 
    the p-adic avatar of $\tilde{\eta}$, then
    \begin{itemize}
    \item When $v\in\finite$ is prime-to-p finite place that 
    splits into $w\bw$ in $\K$ and $K_v^0\subset K$,
    \begin{equation*}
    T^{(1)}_wf(\eta)=[(\chi_\circ^{-1}\tilde{\eta})(\varpi_w)+
    q_w\chi_\circ(\varpi_w)]f(\eta) ,\quad
    T^{(2)}_wf(\eta)=\tilde{\eta}(\varpi_w)f(\eta),
    \end{equation*}
    \item When $v=w\bw$ splits for $w\in \Sigma_p$ and
    $u\in \oo_v^\times$ 
    $\langle u\rangle_{\wt{k}}\pf(g)=\pf(gu)$ and
    \begin{equation*}
    U^{(1)}_{\wt{k},w}f(\eta)=
    (\widehat{\chi}_\circ^{-1}\tilde{\eta}^\wedge)(\varpi_w)
    f(\eta),\quad
    U^{2}_{\wt{k},w}f(\eta)=
    \tilde{\eta}^\wedge(\varpi_w)f(\eta),\quad
    \langle u\rangle_{\wt{k}}f(\eta)=
    \tilde{\eta}^\wedge(u)f(\eta).
    \end{equation*}
    \item When $v=w\bw$ splits for $w\mid \mathfrak{f}$ and 
    $u\in \oo_v^\times$, 
    \begin{equation*}
    U^{(1)}_{w}f(\eta)=
    (\chi_\circ^{-1}\tilde{\eta})(\varpi_w)
    f(\eta),\quad
    U^{2}_{w}f(\eta)=
    \tilde{\eta}(\varpi_w)f(\eta),\quad
    \langle u\rangle f(\eta)=
    \tilde{\eta}(u)f(\eta).
    \end{equation*}
\end{itemize}
\end{prop}
\begin{proof}
    Since we have computed the Hecke eigenvalue of 
    $\omega^\square(\xi_v)\Phi_v$ in Proposition \ref{prop:inv_Up},
    the above formulae follows from the previous proposition
    and the fact that pull-back theta lift
    $\theta^\square_{P,Z}(\eta,\bnu)$ has the central character
    $\eta\vert_{\A_{\K,f}^1}$.
    We note that this latter fact is a consequence of
    Proposition \ref{prop:pullback} and that 
    $\omega(g,h)=\omega(h^{-1}g,1)$ for the Weil representation
    on $\UU(\W)\times \UU(V)$.
\end{proof}

\begin{thm}\label{thm:family}
    We define the pull-back of $\euF_\fs$ to be the 
    $V_p(G_1,K(\fn))$-valued measure $\euF_\fs^\circ$
    of $\fG_\fs$ given by 
    \[
        \int_{\fG_{\fs}} \alpha d\euF^\circ_{\fs}(\underline{B})=
        \sum_{\beta_i\in\K^1\backslash\A_{\K,f}^1/U(\fc)}
        \int_{\fG_{\fs}} \alpha d\euF_{\fs}
        (\underline{B},\underline{B}_{\bbeta})
        \widehat{\chi}_\circ^{-1}(\beta_1\beta_2).
    \]
    Then the measure defines a Hida family 
    in $M^{\ord}(K(\fn), \I_\fs)$ for 
    $\I_\fs=\oo\llbracket \fG_\fs^a\rrbracket$.
\end{thm}
\begin{proof}
    By definition the measure interpolates
    $\beta_{\wt{k}}(f(\eta))$, which is ordinary
    by the previous proposition, for $\fs$-admissible 
    Hecke characters $\eta$ of infinity type $k\Sigma$ for $k\geq0$.
    Since the set of such characters is dense, 
    the measure does take values in $V_p^{\ord}(G_1,K(\fn))$.
    
    To show that $\euF_\fs^\circ$ defines a Hida family valued
    in $\I_\fs$. Observe that since we only consider the values
    of pull-back theta lifts on $G_1=\UU(\W)$, by definition
    $f(\eta)$ only depends on the restriction of $\eta$
    to $\A_\K^1$, or equivalently, 
    by the commutative diagram \eqref{eq:classgps},
    the value $\int\alpha\,d\euF_\fs^\circ$ only depends
    on the restriction of $\alpha$ to $\fG_\fs^-$.
    Thus $F_\fs^\circ$ is actually a measure on 
    $\fG_\fs^-\cong \fG_\fs^a$.
    Such a measure naturally gives an element in
    \[
        V_p^{\ord}(G_1,K(\fn))\hat{\otimes}\I_\fs.
    \]
    That the element satisfies the condition in \eqref{eq:Hida_family} 
    follows from that $f(\eta)$ has the central character
    $\hat{\eta}$ by \eqref{eq:alg_padic}.
\end{proof}

\subsection{Anticyclotomic p-adic L-function}

Having constructed the Hida family,
we can consider the pairing
$\B(\euF_\fs^\circ,\euF_\fs^\circ)\in \I_\fs$
of $\euF^\circ$ with itself as
defined in \eqref{def:Hida_pairing},
which interpolates the values of the pairings
$(f(\eta), U_{\wt{k},p}^{-n}f(\eta))_n$ defined in 
\eqref{def:pairings}
when $\eta$ is an $\fs$-admissible Hecke character
of infinity type $k\Sigma$
by Proposition \ref{prop:Hida_pairing}
and Proposition \ref{prop:pair_at_deff_level}.
We first relate $(f(\eta), f(\eta))_n$
to the inner product of the pull-back theta lift 
$\theta^\square_{P_1,Z_\delta}$, 
assuming that the latter is invariant by $K^n_1(\fn)$.

\begin{lem}\label{lem:compare_Rallis}
   Let $(\cdot,\cdot)_{\wt{k}}$ be the pairing as in
   \eqref{eq:pairing_transpose}, then 
   \begin{multline*}
    (2\pi)^{2\Sigma}\vol(\K_\infty^1\times U(\fc))^4
    \frac{\vol(G_{1\infty})}{(k+1)^d}
    \vol(K^n_1(\fn)) \cdot\frac{1}{\Omega_p^{(2k+4)\Sigma}}
    \sum_{C(K^n_0(\fn))}(f(\eta), f(\eta))_k\\=
   \left(\frac{2\pi i}{\Omega_\infty}\right)^{(2k+4)\Sigma}
    \wt{k}(p^n,1)
    \int_{[G_1]}\theta^\square_{P_1,Z_\delta}(\eta,\bnu)(g)
    \theta^\square_{P_1,Z_\delta}(\eta,\bnu)(\bar{g}\tau_n(\fs))\,dg.
   \end{multline*}
\end{lem}
\begin{proof}
    Write $\rho_{\wt{k}}(g_p)=\rho^{\wt{k}}(g_p^{-\intercal})
    =\rho^{\bwt{k}}(g_{\Sigma_p}^{-\intercal},\id_2)$.
    By \eqref{eq:pairing_transpose} we have
    \[
        (\pf_1(g),\rho_{\wt{k}}(\tau_n)\pf_2(\bar{g}\tau_n(\fs)))_k=
        (\rho_{\wt{k}}(g_p)\pf_1(g),
        \rho_{\wt{k}}(\bar{g}_p)\rho_{\wt{k}}
        (\tau_n)\pf_2(\bar{g}\tau_n(\fs)))_k.
    \]
    Then Proposition \ref{prop:alg_padic} implies that 
    \begin{equation}\label{eq:pairing_compare}
        \left(\frac{2\pi i}{\Omega_\infty}\right)^{(2k+4)\Sigma}
        \wt{k}(p^n,1)\sum_{C(K^n_0(\fn))}
        (\ff^\circ(\eta)(g_f),
        \ff^\circ(\eta)(\bar{g}\tau_n(\fs)))_k=
        \frac{1}{\Omega_p^{(2k+4)\Sigma}}\cdot
        (f(\eta), f(\eta))_n.
    \end{equation}
    On the other hand, identify $\blk(v)=(v, v_{-\wt{k}})_{\wt{k}}$,
    then by Delinition \ref{def:algtheta} and
    \eqref{def:pullback_eta} we have 
    \[
        \theta^{\square}_{P_1,Z_\delta}(\eta,\bnu)(g_f)=
        (2\pi)^\Sigma\vol(\K_\infty^1\times U(\fc))^2
        \blk(\ff^\circ(\eta))(g_f)=
        (2\pi)^\Sigma\vol(\K_\infty^1\times U(\fc))^2
        (\ff^\circ(\eta)(g_f), v_{-\wt{k}})_k.
    \]
    But note that 
    $\theta^{\square}_{P_1,Z_\delta}(\eta,\bnu)(g_\infty g_f)
    =\theta^{\square}_{\rho(g_\infty^{-\intercal},
    \id_2)P_1,Z_\delta}(\eta,\bnu)(g_f)$
    by Lemma \ref{lem:cycle} and \eqref{eq:cocycle}.
    Therefore, identify $\rho^{\bwt{k}}(\C)$
    with the contragredient of $\rho$ and
    apply \eqref{eq:pairing_transpose} shows that
    \[
        \theta^{\square}_{P_1,Z_\delta}(\eta,\bnu)(g_\infty g_f)
        =C\cdot \blk(\rho^{\wt{k}}(g_\infty^\intercal)
        \ff^\circ(\eta))(g_f)
        =C\cdot (\ff^\circ(\eta)(g_f),
        \rho^{\wt{k}}(g_\infty)v_{-\wt{k}})_k,
    \]
    where $C$ is the constant 
    $(2\pi)^\Sigma\vol(\K_\infty^1\times U(\fc))^2$,
    from which we deduce that 
    \begin{multline*}
    \int_{[G_1]}\theta^\square_{P_1,Z_\delta}(\eta,\bnu)(g)
    \theta^\square_{P_1,Z_\delta}(\eta,\bnu)(\bar{g}\tau_n(\fs))\,dg
    \\=C^2\cdot 
    \int_{[G_1]}
    (\ff^\circ(\eta)(g_f),
        \rho^{\wt{k}}(g_\infty)v_{-\wt{k}})_{\wt{k}}
        (\ff^\circ(\eta)(\bar{g}_f\tau_n(\fs)),
        \rho^{\wt{k}}(\bar{g}_\infty)v_{-\wt{k}})_{k}\,dg\\=
        C^2\cdot \frac{\vol(G_{1\infty})}{(k+1)^d}
        \int_{G_1(\F)\backslash G_1(\A_{\F,f})}
        (\ff^\circ(\eta)(g_f),
        \ff^\circ(\eta)(\bar{g}_f\tau_n(\fs))_{\wt{k}}\,dg_f\\=
        C^2\cdot \frac{\vol(G_{1\infty})}{(k+1)^d}
        \vol(K^n_1(\fn))
        \sum_{C(K^n_0(\fn))}
        (\ff^\circ(\eta)(g_f),
        \ff^\circ(\eta)(\bar{g}\tau_n(\fs)))_k
    \end{multline*}
    where we have used Schur's orthogonality for the first equality.
    Then apply the above equality back to the left hand side
    of \eqref{eq:pairing_compare} proves the lemma.
\end{proof}

\begin{prop}
    For each $v\mid\fs$, fix $v=w\bw$ so that $w\mid \fs$
    and let $U_\fs$ be the product of $U_w^{\val_w(\fs)}$, then
\begin{multline*}
    \vol(\K_\infty^1\times U(\fc))^4
    \frac{1}{\Omega_p^{(2k+4)\Sigma}}
    \sum_{C(K^n_0(\fn))}(f(\eta),
   U_{\wt{k},p}^{-n}U_\fs^{-1} f(\eta))_k\\=
   \left(\frac{2\pi}{\Omega_\infty}\right)^{(2k+4)\Sigma}
   (2\pi)^{3\Sigma}C(\K^1)^3C_1\#C(K(\fn_\circ))
        \frac{L(1,\epsilon_{\K/\F})}{\zeta^{(\fc)}(2)}
        \frac{\prod_{v\mid c^+}(1-q_v^{-1})^2}
        {\prod_{v\mid c^-}(1+q_v^{-1})}L(\frac{1}{2},\chi)^2\\
        \textnormal{Im}(\delta)^{k}\tilde{\eta}(z_\delta^{-1})
        \frac{\Gamma((k+2)\Sigma)}{(2\pi)^{(k+2)\Sigma}}
    L(1,\chi^{-2}\tilde{\eta})
    \prod_{v\mid p\fs}
    \frac{(1-\chi^{-2}\tilde{\eta}(\varpi_\bw)q_\bw^{-1})
    (1-\chi^{2}\tilde{\eta}^{-1}(\varpi_w))}
    {\varepsilon(1,(\chi^{-2}\tilde{\eta})_w,\psi_w)}.
\end{multline*} 
\end{prop}
\begin{proof}
    Observe first that by Proposition \ref{prop:eigenvalue}
    the Hecke eigenvalues of $U_{\wt{k},p}^n$ and $U_\fs$
    cancels with $\wt{k}(p^n,1)$ and 
    the terms $\prod_{v\mid p\fs}(\chi_\circ^{-1}
    \tilde{\eta})_w(\varpi_w^n)$ from \eqref{eq:fullRallis}.
    Then combine Lemma \ref{lem:compare_Rallis}
    and Corollary \ref{cor:Rallis} shows that 
\begin{multline*}
    (2\pi)^{2\Sigma}\vol(\K_\infty^1\times U(\fc))^4
    \frac{\vol(G_{1\infty})}{(k+1)^d}\vol(K^n_1(\fn))
    \frac{1}{\Omega_p^{(2k+4)\Sigma}}
    \sum_{C(K^n_0(\fn))}(f(\eta),
   U_p^{-1}U_\fs^{-1} f(\eta))_n\\=
   \left(\frac{2\pi i}{\Omega_\infty}\right)^{(2k+4)\Sigma}
   2(2\pi)^{3\Sigma}C(\K^1)^3C_1
        \frac{L(1,\epsilon_{\K/\F})}{\zeta^{(\fc)}(2)}
        \frac{\prod_{v\mid c^+}(1-q_v^{-1})^2}
        {\prod_{v\mid c^-}(1+q_v^{-1})}L(\frac{1}{2},\chi)^2\\
        \textnormal{Im}(-\delta)^{k}\tilde{\eta}(z_\delta^{-1})
        \frac{\Gamma((k+1)\Sigma)}{(2\pi )^{k\Sigma}}
    L(1,\chi^{-2}\tilde{\eta})
    \prod_{v\mid p\fs}\frac{q_v^{-n_v}}{(1+q_v^{-1})}
    \frac{(1-\chi^{-2}\tilde{\eta}(\varpi_\bw)q_\bw^{-1})
    (1-\chi^{2}\tilde{\eta}^{-1}(\varpi_w))}
    {\varepsilon(1,(\chi^{-2}\tilde{\eta})_w,\psi_w)}.
\end{multline*}
Now the proposition follows from the observation that 
$\Gamma(k+2)=\Gamma(k+1)(k+1)$ and
\[
    \frac{2}{\vol(G_{1\infty})\vol(K^n_1(\fn))}
    \prod_{v\mid p\fs}\frac{q_v^{-n_v}}{(1+q_v^{-1})}=
    \frac{\vol([G_1])}{\vol(G_{1\infty})\vol(K(\fn_\circ))}=
    \#C(K(\fn_\circ)).
\]
\end{proof}

To simplify the notation,
we arrange some of the constants above into a single one and define
\[
C(\K,\chi)=
C(\K^1)^3C_1\#C(K(\fn_\circ))\frac{(2\pi)^{5\Sigma}}
{\vol(\K_\infty^1\times U(\fc))^4}
\frac{L(1,\epsilon_{\K/\F})}{\zeta^{(\fc)}(2)}
\frac{\prod_{v\mid c^+}(1-q_v^{-1})^2}
{\prod_{v\mid c^-}(1+q_v^{-1})}
\left[\frac{L(\frac{1}{2},\chi)}
{\Omega_\infty^\Sigma}\right]^2,
\]
which we note is an algebraic number.
Then we have the following theorem, which follows from 
Proposition \ref{prop:pair_at_deff_level},
Proposition \ref{prop:Hida_pairing},
and the computation above.

\begin{thm}\label{thm:function}
    Let notations be as above and define
    $L_\fs=\Omega_p^{-2\Sigma}
    \B(\euF^\circ_{\fs},U_\fs^{-1}\euF^\circ_{\fs})\in\I_\fs$
    As a measure on $\fG_\fs^a$, when 
    $\eta$ is $\fs$-admissible of infinity type $k\Sigma$
    for $k\geq 0$ we have
    \begin{equation}\label{eq:function}
        \frac{1}{\Omega_p^{(2k+2)\Sigma}}
        \int_{\fG_{\fs}^a}\hat{\eta}\,L_\fs=C(\K,\chi)
        \left(\frac{2\pi}{\Omega_\infty}\right)^{(2k+2)\Sigma}
    \textnormal{Im}(\delta)^{k}\tilde{\eta}(z_\delta^{-1})
    \frac{\Gamma((k+2)\Sigma)}{(2\pi)^{(k+2)\Sigma}}
    L(1,\chi^{-2}\tilde{\eta})
    \prod_{v\mid p\fs}E_v(1,\chi^{-2}\tilde{\eta})
    \end{equation}
    where $E_v(1,\chi^{-2}\tilde{\eta})$ is the modified Euler factor 
    $(1-\chi^{-2}\tilde{\eta}(\varpi_\bw)q_\bw^{-1})
    (1-\chi^{2}\tilde{\eta}^{-1}(\varpi_w))/
    {\varepsilon(1,(\chi^{-2}\tilde{\eta})_w,\psi_w)}$.
\end{thm}
\begin{rem}
    While the Hida family $\euF^\circ$ only interpolates
    $f(\eta)$ when $\eta$ has the infinity type $k\Sigma$,
    one can compare \eqref{eq:function} with, for example,
    \cite[eq (4.13)]{Hsieh22} by setting 
    $\lambda=\chi^{-2}\tilde{\eta}|\cdot|_\K$,
    which has the infinity type $2\Sigma+k(1-c)\Sigma$,
    and check that $L_\fs$ is indeed the anticyclotomic p-adic
    L-function of Hida and Tilouine \cite{Hida1993}
    associated to $\chi^{-2}|\cdot|_\K$.
\end{rem}

\begin{rem}
When $\chi_v\vert_{\K_v^1}\equiv 1$ for all non-split places,
another case of interest is when $\eta$ is of the form 
$\eta=\chi_\circ\nu$, where
$\nu$ is $\fs$-admissible of infinity type $(k+1)\Sigma$ for $k\geq0$.
If we then replace the choice in \S\ref{sec:choice}
at $v\mid \fc$ by
\[
    \phi_{1w}=\phi_{2w}=\id_{\oo_v},\quad
    \phi_{1\bw}=\phi_{2\bw}=\chi_w^{-1}\id_{\oo_v^\times}
\]
then the statement for $v\nmid p\fc\fs$ in
Proposition \ref{prop:Rallis_local} also holds for $v\mid \fc$,
Corollary \ref{cor:Rallis} holds if we replace
$\zeta^{(\fs)}(2)$ by $\zeta(2)$,
and Corollary \ref{cor:Fourier} also holds
if we modify $z''$ and $R_{z',v}$ at $v\mid \fs$ accordingly.
Consequently we can construct a Hida family $\euF'^\circ$
interpolating $f(\chi\nu)$ such that 
$L_\fs'\coloneqq\Omega_p^{-2\Sigma}
\B(\euF'^\circ_{\fs},U_\fs^{-1}\euF'^\circ_{\fs})\in\I_\fs$
satisfies 
\begin{equation}
        \frac{1}{\Omega_p^{(2k+2)\Sigma}}
        \int_{\fG_{\fs}^a}\hat{\nu}\,L'_\fs=C(\K,\chi)
        \left(\frac{2\pi}{\Omega_\infty}\right)^{(2k+2)\Sigma}
    \textnormal{Im}(\delta)^{k}\tilde{\eta}(z_\delta^{-1})
    \frac{\Gamma((k+2)\Sigma)}{(2\pi)^{(k+2)\Sigma}}
    L(1,\tilde{\nu})
    \prod_{v\mid p\fs}E_v(1,\tilde{\nu}).
\end{equation}
\end{rem}

\subsection{Primitivity and toric periods}

We can similarly compare $f(\eta)$ with 
$P_{\bmu}(\theta^\square_{P_1,Z_\delta}(\eta,\bnu))$,
the toric period integral associated to 
$\bnu=\{(\eta\mu)^{-1}, \mu\}$.
Since $\theta^\square_{P_1,Z_\delta}(\eta,\bnu)$ 
has central character $\eta$
and $\rho(\smat{1&\\&\alpha},\id_2)P_1=P_1$
if $\alpha\in\K_\infty^1$,
let $U(\fc)\subset \A_{\K,f}^1$
be the open compact subgroup defined from \eqref{def:smallU},
the same argument as \eqref{def:pullback_eta} shows that 
\begin{multline}\label{eq:period_sum}
    P_{\bmu}(\theta^\square_{P_1,Z_\delta}(\eta,\bnu))\coloneqq
    \int_{[T]}\theta^\square_{P_1,Z_\delta}(\eta,\bnu)(\smat{\alpha_1&\\&\alpha_2})\eta^{-1}(\alpha_1)
    \,d\alpha_1d\alpha_2\\=\vol([\A_\K^1])
    \int_{[\A_\K^1]}\theta^\square_{P_1,Z_\delta}(\eta,\bnu)(\smat{1&\\&\alpha})\,d\alpha=\vol([\A_\K^1])
    \vol(\K_\infty^1\times U(\fc))
    \sum_{\alpha\in\K^1\backslash\A_{\K,f}^1/U(\fc)}
    \theta^\square_{P_1,Z_\delta}(\eta,\bnu)(\smat{1&\\&\alpha}).
\end{multline}

\begin{lem}
The toric period integral 
$P_{\bmu}(\theta^\square_{P_1,Z_\delta}(\eta,\bnu))$ 
can be related to a finite sum of values of $f(\eta)$ by
   \begin{equation}\label{eq:compare_period}
    \left(\frac{2\pi i}{\Omega_\infty}\right)^{(k+2)\Sigma}
    P_{\bmu}(\theta^\square_{P_1,Z_\delta}(\eta,\bnu))=
   \frac{\vol([\A_\K^1])
   (2\pi)^\Sigma\vol(\K_\infty^1\times U(\fc))^3}{\Omega_p^{(k+2)\Sigma}}
    \sum_{\alpha\in\K^1\backslash\A_{\K,f}^1/U(\fc)}
    \blk f(\eta)(\smat{1&\\&\alpha}).
   \end{equation}
\end{lem}
\begin{proof}
Since $\blk$ is given by evaluating at $P_1$, which is invariant by 
$\rho_{\bwt{k}}(\smat{1&\\&\alpha}_{\Sigma_p},\id_2)$,
apply $\blk$ to both sides of Proposition \ref{prop:alg_padic} gives
\[
    \left(\frac{2\pi i}{\Omega_\infty}\right)^{(k+2)\Sigma}
    \blk\ff^\circ(\eta)(\smat{1&\\&\alpha})=
    \frac{1}{\Omega_p^{(k+2)\Sigma}}\cdot
    \blk f(\eta)(\smat{1&\\&\alpha}).
\]
But we have seen in the proof of Lemma \ref{lem:compare_Rallis} that
$\theta^{\square}_{P_1,Z_\delta}(\eta,\bnu)(g_f)=
(2\pi)^\Sigma\vol(\K_\infty^1\times U(\fc))^2
\blk(\ff^\circ(\eta))(g_f)$,
which we plug into the right hand side of \eqref{eq:period_sum}
and conclude the lemma.
\end{proof}

Now apply Corollary \ref{cor:period} to the square
of the left hand side of \eqref{eq:compare_period} gives
\begin{multline*}
   \vol([\A_\K^1])^2(2\pi)^{2\Sigma}\vol(\K_\infty^1\times U(\fc))^6
   \frac{1}{\Omega_p^{(2k+4)\Sigma}}
   \left(\sum_{\alpha\in\K^1\backslash\A_{\K,f}^1/U(\fc)}
    \blk f(\eta)(\smat{1&\\&\alpha})\right)^2\\=
    \left(\frac{2\pi i}{\Omega_\infty}\right)^{(2k+4)\Sigma}
    4(2\pi)^{4\Sigma}C(\K^1)^4C_2
    \frac{\vol([\A_\K^1])^2L(\frac{1}{2},\chi)^2L(\frac{1}{2},\chi\tilde{\mu})}{\prod_{v\mid\fc_-}(1+q_v^{-1})^4}\\
    \cdot
    \textnormal{Im}(-\delta)^{k\Sigma}\tilde{\eta}(z_\delta'^{-1})
    \frac{\Gamma((k+1)\Sigma)}{(2\pi)^{k\Sigma}}
    L(\frac{1}{2},\chi(\tilde{\eta}\tilde{\mu})^{-1})\prod_{v\mid p\fs}
    \frac {\varepsilon(\frac{1}{2}, (\chi(\tilde{\eta}\tilde{\mu})^{-1})_w,\psi_w)}
    {L(1/2, (\chi(\tilde{\eta}\tilde{\mu})^{-1})_w)^2}.
\end{multline*}
To simplify the notation, we collect some of the constant
and define 
\[
    C(\K,\chi)'=
    \frac{(2\pi)^{6\Sigma}}{\vol(\K_\infty^1\times U(\fc))^6}
    \frac{4C(\K^1)^4C_2}{\prod_{v\mid\fc_-}(1+q_v^{-1})^4}
    \left[\frac{L(\frac{1}{2},\chi)}{\Omega_\infty^\Sigma}\right]^2
\]
which is also an algebraic number.
After some cancellation, we can rewrite the equation above as
\begin{multline}\label{eq:padic_sum}
   \frac{1}{\Omega_p^{(2k+4)\Sigma}}
   \left( 
   \sum_{\alpha\in\K^1\backslash\A_{\K,f}^1/U(\fc)}
    \blk f(\eta)(\smat{1&\\&\alpha})\right)^2=
    C(\K,\chi)'
    \frac{L(\frac{1}{2},\chi\tilde{\mu})}{\Omega_\infty^\Sigma}\\\cdot
    \left(\frac{2\pi}{\Omega_\infty}\right)^{(2k+1)\Sigma}
    \textnormal{Im}(-\delta)^{k\Sigma}\tilde{\eta}(z_\delta'^{-1})
    \frac{\Gamma((k+1)\Sigma)}{(2\pi)^{(k+1)\Sigma}}
    L(\frac{1}{2},\chi(\tilde{\eta}\tilde{\mu})^{-1})\prod_{v\mid p\fs}
    \frac {\varepsilon(\frac{1}{2}, (\chi(\tilde{\eta}\tilde{\mu})^{-1})_w,\psi_w)}
    {L(1/2, (\chi(\tilde{\eta}\tilde{\mu})^{-1})_w)^2}
\end{multline}
\begin{rem}
Since the Hida family $\euF^\circ$ interpolates $\blk(f(\eta))$,
the argument as Theorem \ref{thm:function} shows that 
\[
    \sum_{\alpha\in\K^1\backslash\A_{\K,f}^1/U(\fc)}
    \euF^\circ\left(\smat{1&\\&\alpha}\right)
\]
should define a square root anticyclotomic p-adic L-function.
\end{rem}

\begin{thm}\label{thm:primitive}
    The Hida family $\euF^\circ_{\fs}$ is primitive in 
    if aside from \ref{cond:chi1}-\ref{cond:chi3} we further assume
\begin{enumerate}[label={($\chi$\arabic*)}]
    \setcounter{enumi}{3}
    \item $C(\K,\chi)'$ is a $p$-unit.
    \label{cond:chi4}
    \item $\chi_\circ\coloneqq\chi|\cdot|_{\K}^{1/2}$ 
    satisfies the conditions in \cite[Theorem A]{Hsieh12}.
    \label{cond:chi5}
\end{enumerate}
\end{thm}
\begin{proof}
    Decompose $\fG_\fs$ into $\Delta_\fs\times W$,
    where $\Delta_\fs$ is the finite part of $\fG_\fs$
    and $W$ is free of finite rank over $\Zp$ 
    and is independent of $\fs$.
    It suffices to show that for each character 
    $\eta$ of $\Delta_{\fs}$, viewed as an $\fs$-admissible
    finite order Hecke character via the reciprocity map,
    the algebraic modular form $\pf(\eta)$
    is non-vanishing modulo $p$.

    Let $\ell$ be a prime number, 
    $\fl$ be a degree-one prime of $\F$ over $\ell$
    that is split in $\K$ and prime to $p\dd_{\K/\F}\fc\fs$.
    Note that the conditions in \cite[Theorem A]{Hsieh12}
    for $\chi_\circ$ are satisfied if and only if the same set of
    conditions for $\chi_\circ\tilde{\eta}^{-1}$ are satisfies.
    Therefore, assuming \ref{cond:chi5}, we can apply 
    the main result in \cite{Hsieh12} to
    $\chi_\circ\tilde{\eta}^{-1}$, 
    so there exists a finite order character $\mu$
    of conductor dividing powers of $\fl$ such that 
    \begin{equation*}
    \frac{L^{(\fl)}(\frac{1}{2},\chi\tilde{\mu})}{\Omega_\infty^{\Sigma}}
   \frac{L^{(\fl)}(\frac{1}{2},\chi(\tilde{\eta}\tilde{\mu})^{-1})}{\Omega_\infty^{\Sigma}}
    \not\equiv 0 \pmod \fm.
    \end{equation*}
    and $1-(\chi_\circ(\tilde{\eta}\tilde{\mu})^{-1})_w(\varpi_w)
    \not\equiv 0\mod \fm$ for all $w\in\Sigma_p$ or $w\mid\mathfrak{f}$.

    Since the modification at $v=w\bw$ with $w=\fl$ 
    in Proposition \ref{prop:period_local}
    can be written as a $\overline{\Z}_p$-linear combination
    of right-translations of $\phi_v$, we see that some 
    $\overline{\Z}_p$-linear combination
    of right-translations of $\blk(f(\eta))$ is equal 
    to the right hand side of \eqref{eq:padic_sum},
    which is a p-unit by \ref{cond:chi4} and the choice of $\mu$.
    This shows that $f(\eta)$ is indeed non-vanishing modulo $p$.
\end{proof}

\bibliographystyle{amsalpha}
\bibliography{biblio}
\end{document}